\documentclass[11pt, reqno]{amsart}
\usepackage[utf8]{inputenc}
\usepackage{amssymb}
\usepackage{amsthm}
\usepackage{hyperref}
\usepackage{cleveref}
\usepackage{amsmath}
\usepackage{esvect}
\usepackage[margin=1.0in]{geometry}
\usepackage{comment}
\newtheorem{theorem}{Theorem}[section]
\theoremstyle{definition}

\newtheorem{definition}[theorem]{Definition}
\newtheorem{lemma}[theorem]{Lemma}
\newtheorem{corollary}[theorem]{Corollary}
\newtheorem{proposition}[theorem]{Proposition}
\newtheorem{remark}[theorem]{Remark}
\newtheorem{conjecture}[theorem]{Conjecture}

\numberwithin{equation}{section}

\DeclareMathOperator{\Prob}{Prob}
\DeclareMathOperator{\Aut}{Aut}
\DeclareMathOperator{\Sur}{Sur}
\DeclareMathOperator{\Hom}{Hom}
\DeclareMathOperator{\Cl}{Cl}
\DeclareMathOperator{\Inn}{Inn}
\DeclareMathOperator{\coker}{coker}
\DeclareMathOperator{\rDisc}{rDisc}
\DeclareMathOperator{\Ind}{Ind}
\DeclareMathOperator{\Sel}{Sel}
\DeclareFontFamily{U}{wncy}{}
\DeclareFontShape{U}{wncy}{m}{n}{<->wncyr10}{}
\DeclareSymbolFont{mcy}{U}{wncy}{m}{n}
\DeclareMathSymbol{\cyrBe}{\mathalpha}{mcy}{"42}

\title{Presentations of Galois groups of unramified extensions of global fields and its predicted distribution}
\author{Ken Willyard }
\date{July 2025}
\begin{document}
\begin{abstract}
    Motivated by the work of Liu, we study certain canonical quotients of $G_{\text{\O}}^T(K)$---the Galois group of the maximal unramified extension of a global field $K$ that is split completely at a finite nonempty set of places in $T$---for $\Gamma$-extensions $K/Q$, and prove they have presentations of a particular form. This presentation leads us to the construction of a new random group model as in the work of Liu, Wood, and Zureick-Brown that predicts the distribution of $G_{\text{\O}}^T(K)$ as we vary among $\Gamma$-extensions $K/Q$ with prescribed local conditions at places in $T$, giving a generalization of the non-abelian Cohen-Lenstra-Martinet Heuristics. The key generalization is that $Q$ can be an arbitrary global field, while this comes at a cost of introducing a prime-to-$|\Cl_T(Q)|$ condition in addition to avoiding roots of unity, $|\Gamma|$, and the characteristic if $Q$ is a function field. 
\end{abstract}
\maketitle

\setcounter{tocdepth}{1}
\tableofcontents

\section{Introduction}
 Unramified extensions of number fields have attracted interest of mathematicians for over a century. One way to understand them is through class field theory, which states that for a number field $K$, there is a reciprocity map that provides a natural isomorphism between the class group of $K$, $\Cl(K)$, and the Galois group of the maximal unramified abelian extension of $K$. It follows that the abelianization $G_{\text{\O}}(K)^{\text{ab}}$ is finite, where $G_{\text{\O}}(K)$ is the Galois group of the maximal unramified extension of $K$, $K_{\text{\O}}/K$. However, the group $G_{\text{\O}}(K)$ itself has remained difficult to understand. For instance, Golod and Shafarevich proved that infinite class field towers exist, implying that $G_{\text{\O}}(K)$ could be infinite. Nevertheless, Shafarevich was able to prove that the pro-$\ell$ completion $G_{\text{\O}}(K)(\ell)$ is (topologically) finitely presented in \cite{Sha}, which lets us understand the maximal unramified $\ell$-extension of $K$. 

 In the case where $K/Q$ is a finite Galois extension with Galois group $\Gamma$, the extension $K_{\text{\O}}/Q$ is Galois. If $K_{\text{\O}}'/K$ is the maximal unramified extension of $K$ whose order is coprime to $|\Gamma|$, then the Schur-Zassenhaus theorem states that \[G(K_{\text{\O}}'/Q)\cong G(K_{\text{\O}}'/K)\rtimes \Gamma.\]
 Moreover, $G(K_{\text{\O}}'/K)$ admits a $\Gamma$-action unique up to conjugation, and we can try to understand the presentation of this Galois group or its $\Gamma$-quotients with this added structure. This is natural from the perspective of class groups: $\Cl(K)$ has a natural action of $G(K/Q)\cong\Gamma$, and we want to extract some information about $\Cl(K)$ from this $\Gamma$-action. For example, when $Q=\mathbb{Q}$, the prime-to-$|\Gamma|$ part of $\Cl(K)$ is the quotient of a direct sum of augmentation ideals of $\Gamma$. Liu, Wood, and Zureick-Brown used this perspective in \cite{lwzb} to show that when $K/\mathbb{Q}$ is totally real and $\ell\nmid 2|\Gamma|$, we have a presentation of the form 
 \begin{equation}\label{eq: proell presentation} G_{\text{\O}}(K)(\ell)\cong (\mathcal{F}_n)_\ell/[r^{-1}\gamma(r)]_{r\in X, \gamma\in\Gamma},\end{equation}
 where $(\mathcal{F}_n)_\ell$ is the nonabelian analogue of the augmentation ideal called the free admissible pro-$\ell$ group on $n$ generators (see 3.3 of \cite{lwzb} for a detailed definition), and $X\subset (\mathcal{F}_n)_\ell$ is a set of cardinality $n+1$. The group $[r^{-1}\gamma(r)]$ is the closed normal subgroup topologically generated by elements of the form $r^{-1}\gamma(r)$. We also remark that \cite{lwzb} obtains a presentation of this form for all $n$ large enough. 

 Generalizing the Galois cohomology techniques from the pro-$\ell$ setting, Liu proved a presentation result for the pro-$\mathcal{C}$ completion $G_{\text{\O}}(K)^\mathcal{C}$ for totally real $K$. Here, $\mathcal{C}$ is any finite set of finite $\Gamma$-groups whose orders are coprime to $2|\Gamma|$ in Theorem $1.1$ of \cite{Liu2} (see \Cref{sec: notation} for definition of pro-$\mathcal{C}$ completion and other terms in this paragraph). The work of Liu is significant in part because it suggests that the pro-prime-to-$2|\Gamma|$ completion of $G_{\text{\O}}(K)$---if it is finitely admissibly generated---admits a presentation of the form $\mathcal{F}_n/[r^{-1}\gamma(r)]$ for finitely many $r$, where $\mathcal{F}_n$ is the free admissible pro-prime-to-$2|\Gamma|$ group on $n$ generators. Theorem $1.2$ of \cite{LW2} also provides a presentation result of this form for imaginary $\Gamma$-extensions $K/\mathbb{Q}$ for any set $\mathcal{C}$ so that $G_{\text{\O}}(K)^\mathcal{C}$ is finitely admissibly generated and satisfies all the necessary relative primeness conditions. Before we proceed to the main result, we remark that in all three of \cite{lwzb}, \cite{Liu2}, \cite{LW2}, the case where $Q=\mathbb{F}_q(t)$ and $K$ is a global field is studied as well, and similar theorems are obtained as long as all the groups involved are coprime to the characteristic and we avoid roots of unity of $K$. 
 
 In the first part of this paper, we allow our base field $Q$ to be an arbitrary global field and compute analogous presentations for the pro-$\mathcal{C}$ completion of the Galois group of the maximal unramified extension of $K$ split completely above a finite nonempty set of places $T=\{v_1,\dots, v_{|T|}\}$ of $Q$, $K_{\text{\O}}^T/K$, where $T$ contains all of the archimedean places if $Q$ is a number field. We denote this Galois group by $G_{\text{\O}}^T(K)$. We prove the following theorem: 

 \begin{theorem}\label{thm: main presentation}
     Let $K/Q$ be a Galois extension of global fields with Galois group $\Gamma$, and $T$ be a finite nonempty set of places of $Q$. Let $\Gamma_{v_i}\subset \Gamma$ be a decomposition group of $v_i\in T$ of the extension $K/Q$, and $\mathcal{C}$ be a set of finite $\Gamma$-groups. Suppose all the relative primeness restrictions are satisfied (see \Cref{thm: main} for more details regarding the relative primeness restriction and notation) and $G_{\text{\normalfont\O}}^T(K)^{\mathcal{C}}$ is finitely admissibly generated. Then for $n$ large enough, we have 
 \begin{equation}G_{\text{\normalfont\O}}^T(K)^{\mathcal{C}}\cong_\Gamma \mathcal{F}_n^{\mathcal{C}}/[r_i^{-1}\gamma(r_i),r_j]_{1\leq i\leq n+1, n+2\leq j\leq n+|T|, \gamma\in \Gamma}\end{equation}
    for some choice $r_i\in \mathcal{F}_n^{\mathcal{C}}$ for $1\leq i\leq n$, and $r_i\in(\mathcal{F}_n^{\mathcal{C}})^{\Gamma_{v_{i-n}}}$ for $n+1\leq i \leq n+|T|$.
 \end{theorem}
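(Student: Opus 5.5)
We follow the strategy of Liu's presentation theorem (Theorem 1.1 of \cite{Liu2}) and of \cite{LW2}, adapted to the ring of $T$-integers of $K$. Write $G=G_{\text{\O}}^T(K)^{\mathcal{C}}$ and let $\mathcal{F}_n^{\mathcal{C}}$ be the free admissible pro-$\mathcal{C}$ group. Since $G$ is finitely admissibly generated, for $n$ large we fix a $\Gamma$-equivariant surjection $\pi\colon\mathcal{F}_n^{\mathcal{C}}\to G$ with kernel $N$. The task is to produce $n+|T|$ elements whose relators $r_i^{-1}\gamma(r_i)$ (for $i\le n+1$) and $r_j$ (for $n+2\le j\le n+|T|$) generate $N$ as a closed normal $\Gamma$-stable subgroup.

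\emph{Step 1: local sources of relators.} The relators $r_j\in(\mathcal{F}_n^{\mathcal{C}})^{\Gamma_{v_{j-n}}}$ come from complete splitting at $v_{j-n}\in T$. Pass to $\Omega=K_{\text{\O}}^{T,\mathcal{C}}$ and the group $G(\Omega/Q)\cong G\rtimes\Gamma$, using Schur--Zassenhaus and the coprimality hypotheses. A decomposition group at a place of $\Omega$ above $v$ maps isomorphically onto $\Gamma_v$. Lifting its Frobenius (or generator data) to $\mathcal{F}_n^{\mathcal{C}}\rtimes\Gamma$ produces an element fixed by $\Gamma_v$ and killed in $G$. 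One place of $T$ is absorbed as the $(n+1)$-st ``generic'' relator, which accounts for the index shift. The $\Gamma$-invariance of these elements is forced by the product formula and reciprocity in the form of $\Gamma_v$-equivariance.

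\emph{Step 2: cohomological count.} Following \cite{Liu2}, a family of relators presents $G$ exactly when, for every finite irreducible $G\rtimes\Gamma$-module $A$ of order prime to $|\Gamma|$ and to the excluded primes (appearing in $\mathcal{C}$-quotients), the induced map on $H^2$ detects all extensions. Concretely, we compare the multiplicity of $A$ in $N/[N,N]N^{p}$ (the $\Gamma$-module of relations) with the number of relators. Via the Hochschild--Serre and Poitou--Tate sequences for $\mathcal{O}_{K,T}$, we compute $\dim H^2(G_{\text{\O}}^T,A)$ and $h^1$ in terms of $h^0$, the unit group $\mathcal{O}_{K,T}^\times$, and the local terms at $T$. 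By Dirichlet's unit theorem, $\mathcal{O}_{K,T}^\times\otimes\mathbb{Z}_p$ is $\Gamma$-isomorphic to $\bigoplus_{v\in T}\mathbb{Z}_p[\Gamma/\Gamma_v]$ modulo a trivial summand. This is exactly the source of the $|T|$ extra relators of type $\mathbb{Z}[\Gamma/\Gamma_v]$, and of the mixed shape $r^{-1}\gamma(r)$ versus $\Gamma_v$-invariant $r_j$. The prime-to-$|\Cl_T(Q)|$ hypothesis and the absence of roots of unity make the Bockstein/$\cyrBe$ term and the $\mu_p$ term vanish, so the count matches on the nose.

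\emph{Step 3: realization.} Having shown that the number of relators needed in each isotypic component is bounded by what the proposed form supplies, we lift generators of $N$ as a $\mathcal{F}_n^{\mathcal{C}}\rtimes\Gamma$-group. We do this component by component through the finite quotients in $\mathcal{C}$, using a Frattini/Nakayama-type argument for the admissible pro-$\mathcal{C}$ setting. We then take an inverse limit over the finite levels, using compactness of the nonempty sets of valid tuples $(r_i)$, to obtain a single choice that works for all of $\mathcal{C}$. Taking $n$ large only adds free generators, each matched by an $r_i=$ generator relation, so it does not affect the count.

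The main obstacle is Step 2: identifying the relation module precisely, as a $\Gamma$-module, as $\mathbb{Z}[\Gamma]^{n}\oplus\mathbb{Z}\oplus\bigoplus_{v\in T}\mathbb{Z}[\Gamma/\Gamma_v]$-type data modulo a trivial piece. This requires controlling the global $H^2$ over a general base $Q$ rather than $\mathbb{Q}$ or $\mathbb{F}_q(t)$. This is precisely where $\Cl_T(Q)$ enters, and I would first try to kill the obstruction by showing that $\cyrBe^2$ injects into a dual of $\Cl_T(K)$-invariants, which are controlled by $\Cl_T(Q)$ under the coprimality assumption.
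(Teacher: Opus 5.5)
\textbf{There is a genuine gap.} Your architecture matches the paper's: bound the multiplicity of each irreducible $A$ in the relation module of $\mathcal{F}_n^{\mathcal{C}}\twoheadrightarrow G_{\text{\O}}^T(K)^{\mathcal{C}}$, compare it with what relators of the prescribed shape can supply, and finish by compactness. But the step carrying all the content, your Step 2, is left open, and the route you sketch for it would not work. What is needed is
\[\delta^T_{K/Q,\text{\O}}(A)\le\sum_{v\in T}\dim_{\mathbb{F}_\ell}A^{\Gamma_v}+1_{\mu_\ell}(A)-1_{\mathbb{F}_\ell}(A)\]
for every odd irreducible $|\Gamma|'$-$G(K^T_{\text{\O}}/Q)$-module $A$. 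This includes modules on which $G_{\text{\O}}^T(K)$ acts nontrivially, which are unavoidable once $\mathcal{C}$ is nonabelian.

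A unit-theorem computation with $\mathcal{O}_{K,T}^\times$ only sees modules on which $G_{\text{\O}}^T(K)$ acts trivially, which is essentially the class-group case of \Cref{prop: class group presentation alt}. For general $A$ you would need units of the field cut out by $A$, whose degree over $K$ need not be prime to $\ell$, so Hochschild--Serre does not descend.

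The paper instead works over $Q$ with $A$ as a $G_Q$-module:
\begin{itemize}
\item $H^1(G_{\text{\O}}(K),A)^\Gamma$ is an unramified Selmer group, and $\#H^2(G_{\text{\O}}(K),A)^\Gamma$ is bounded by the dual Selmer group.
\item The Greenberg--Wiles formula evaluates the ratio. The $\ell$-adic terms $[Q:\mathbb{Q}]\dim A$ combine with the archimedean terms to leave $\sum_{v\mid\infty}\dim A^{\Gamma_v}+1_{\mu_\ell}(A)-1_{\mathbb{F}_\ell}(A)$ (\Cref{lem: main ineq}).
\item Inflation--restriction--transgression from $K_{\text{\O}}$ to $K_{\text{\O}}^T$ adds at most $\dim A^{\Gamma_v}$ per finite $v\in T$, since the prime-to-$|\Gamma|$ local unramified groups are procyclic with trivial $\Gamma_v$-action (\Cref{cor: main ineq function field}).
\end{itemize}
Even with this bound in hand, you cannot read off the pro-$\mathcal{C}$ multiplicity from ``$H^2$ detects extensions.'' The class $\overline{\mathcal{C}}$ is not closed under extensions, so $H^2(\mathcal{F}_n^{\mathcal{C}},A)$ need not vanish, and $H^2(G_{\text{\O}}^T(K)^{\mathcal{C}},A)$ is not $H^2(G_{\text{\O}}^T(K),A)$. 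The paper bridges this with an intermediate quotient $\widetilde{G}$ of $G_{\text{\O}}^T(K)$ satisfying $\widetilde{G}^{\mathcal{C}}=G_{\text{\O}}^T(K)^{\mathcal{C}}$, together with Liu's comparison results (\Cref{cor: multiplicity bound}). Your proposal has nothing in place of this.

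\textbf{Your proposed fix also misplaces the hypotheses.}
\begin{itemize}
\item No $\cyrBe$-term has to vanish. The dual-Selmer (class group) contribution to $h^2$ is matched against $h^1$ by duality. For nontrivial $A$, the relevant isotypic part of $\Cl_T(K)$ is typically nonzero and unrelated to $\Cl_T(Q)$. So injecting $\cyrBe^2$ into the dual of $\Cl_T(K)^\Gamma$ handles only the trivial isotypic component.
\item Coprimality to $|\Cl_T(Q)|$ enters only through admissibility (\Cref{prop: admissibility}, \Cref{cor: admissibility}). The $\Gamma$-coinvariant quotient of an odd, prime-to-$|\Gamma|$ quotient of $G_{\text{\O}}^T(K)$ cuts out an unramified $T$-split extension of $Q$.
\item The $1_{\mu_\ell}(A)$ term is removed by combining $\mu_\ell\not\subset K$ with admissibility. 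If $A\cong\mu_\ell$, then $K(\mu_\ell)/K$ would lie in the $\mathcal{C}$-extension with trivial $\Gamma$-action on its Galois group, contradicting admissibility.
\item Step 3 needs the exact threshold, which you never compute. The $n+|T|$ relators can generate the $A$-isotypic part $A^m$ iff
\[m\dim_{\mathbb{F}_\ell}\Hom_{G\rtimes\Gamma}(A,A)\le n\dim A-(n+1)\dim A^\Gamma+\sum_{v\in T}\dim A^{\Gamma_v}.\]
This holds because each $Y(r_i)$, $i\le n$, supplies $\dim A-\dim A^\Gamma$; $Y(r_{n+1})$ supplies $\dim A^{\Gamma_{v_1}}-\dim A^\Gamma$; and each $r_j$, $j\ge n+2$, supplies $\dim A^{\Gamma_{v_{j-n}}}$ (\Cref{prop: relation probability}).
\item Step 1 contributes nothing to the proof. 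A $\Gamma_v$-fixed element of the kernel carries no information by itself, since it could be trivial. The relators attached to archimedean places have no local source: for real quadratic $K/\mathbb{Q}$ one has $\Gamma_\infty=1$, so no local element exists, yet the $(n+1)$-st relator is there. It reflects the unit rank in the global count.
\end{itemize}
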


 Here, $\mathcal{F}_n^\mathcal{C}$ is the pro-$\mathcal{C}$ completion of $\mathcal{F}_n$, and $(\mathcal{F}_n^\mathcal{C})^{\Gamma_{v_{i-n}}}$ is the set of $\Gamma_{v_{i-n}}$-fixed elements of the group $\mathcal{F}_n^\mathcal{C}$. We point out that there is a special relator(s) corresponding to $i=n+1$, where we look at elements of the form $r_{n+1}^{-1}\gamma(r_{n+1})$ for $r_{n+1}\in (\mathcal{F}_n^\mathcal{C})^{\Gamma_{v_{1}}}, \gamma\in \Gamma$. This theorem is proved using results of \cite{Liu2}. In fact, the inequality obtained from these cohomological calculations suggests the presentation in \Cref{thm: main presentation}, simply by reversing the calculations in the proof of \Cref{thm: main} below. Being able to handle arbitrary base fields in many instances gives an indication of the strength of the theory developed in \cite{Liu2}. 
 
 Taking $Q=\mathbb{Q}$ and $\mathcal{C}$ to be the class of all abelian groups prime to the order of $\Delta_\infty:=|\Cl(Q)||\mu(K)||\Gamma|$, \Cref{thm: main presentation} gives a presentation for the prime-to-$\Delta_\infty$-part of class groups as well. Here $\mu(K)$ is the set of roots of unity in $K$. This presentation for class groups can be obtained without using the theory developed in \cite{Liu2}, and we provide a proof of this in \Cref{prop: class group presentation alt}. In fact, when $|\Gamma|=2$, this gives the form of the presentation of the odd part of the class group of quadratic extensions that motivates the random matrix model for the Cohen-Lenstra heuristics introduced in \cite{cl} and outlined in \cite{EV}.

 Let us now turn to the Cohen-Lenstra Heuristics. These wide-open conjectures of Cohen and Lenstra (see \cite{cl}) predict the distribution of class groups of number fields as we vary among certain $\Gamma$-extensions of $\mathbb{Q}$. For example, the Cohen-Lenstra Heuristics predict that the probability that the $\ell$-primary part of the class group of an imaginary quadratic number field, $\Cl(K)[\ell^\infty]$, is isomorphic to a fixed finite abelian $\ell$-group $H$ is inversely proportional to the number of automorphisms of $H$, $|\Aut H|$. Furthermore, they predict that the average number of surjections from these class groups to $H$ is equal to $1$. Here by ``probability" (similar for ``average"), we mean the limit of the discrete probability that an imaginary quadratic extension with discriminant $>-X$ has $\Cl(K)[\ell^\infty]$ isomorphic to $H$, as $X\rightarrow\infty$. This limit of average number of surjections is called the $H$-moment. 

 Cohen and Martinet generalized these conjectures in \cite{cm} to consider distributions of $\Cl(K)$ for arbitrary $\Gamma$-extensions over general base number fields $Q$, and Boston, Bush, and Hajir generalized in a different direction in \cite{bbh}, \cite{bbh2} to address the nonabelian case $G_{\text{\O}}(K)(\ell)$, albeit for quadratic extensions of $\mathbb{Q}$. Analyzing the $H$-moments appearing in both of these generalizations, \cite{lwzb} gave a conjecture generalizing both cases, at least when $Q=\mathbb{Q}$ and $K/\mathbb{Q}$ is a totally real $\Gamma$-extension. In particular, \cite{lwzb} found a random group model whose $H$-moments specialize to the ones found in \cite{cm}, \cite{bbh}, \cite{bbh2}; in fact, the random group model construction was what led \cite{lwzb} to prove the presentation \eqref{eq: proell presentation}, Liu to prove the presentation for the pro-$\mathcal{C}$ quotient in \cite{Liu2} in the totally real case, and Liu and the author to prove the analogous imaginary case in \cite{LW2}. 

 In the second part of this paper, we use the procedure given in \cite{lwzb} and outlined in \cite{LW2} to create a random group model suggested by the presentation in \Cref{thm: main presentation} to conjecture the $H$-moments of the group $G_{\text{\O}}^T(K)$ for $H$ with order coprime to $\Delta_Q:=|\Cl_T(Q)||\mu(Q)||\Gamma|$ for an arbitrary base global field $Q$, where we further require the order of $H$ to be coprime to the characteristic of $Q$ when $Q$ is a function field. Here $\Cl_T(Q)$ is the $T$-class group of $Q$. In particular, let $D$ be a positive integer and $E_{\Gamma,(\Gamma_v)_{v\in T}}(D,Q)$ be the finite set of isomorphism classes of $\Gamma$-extensions $K/Q$ with decomposition group at primes $v\in T$ conjugate to $\Gamma_v$, and the norm of the radical of the discriminant of $K/Q$, denoted $\rDisc K/Q$, satisfying $\rDisc K/Q=D$. Finally, remove a thin subset of extensions as in \Cref{def: E'} from $E_{\Gamma, (\Gamma_v)_{v\in T}}(D,Q)$, and call this subset $E'_{\Gamma, (\Gamma_v)_{v\in T}}(D,Q)$. Then we conjecture the following:
 \begin{conjecture}\label{conj: main intro}
    If $\#E'_{\Gamma,(\Gamma_v)_{v\in T}}(D,Q)\neq0$ for some $D$ and $H$ is a finite admissible $\Gamma$-group whose order is coprime to $\Delta_Q$, then the $H$-moments are given by the formula
    \begin{equation}\lim_{X\rightarrow\infty}\frac{\sum_{D\leq X}\sum_{K\in E_{\Gamma,(\Gamma_v)_{v\in T}}'(D,Q)}|\Sur_\Gamma(G_{\text{\O}}^T(K)_{|\Gamma|'},H)|}{\sum_{D\leq X}\#E_{\Gamma,(\Gamma_v)_{v\in T}}'(D,Q)}=\frac{|H^\Gamma|}{\prod_{v\in T}|H^{\Gamma_v}|}.\end{equation}
\end{conjecture}
 
 See \Cref{conj: main} and the discussion before it for details. Here, $\Sur_\Gamma(G_{\text{\O}}^T(K)_{|\Gamma|'},H)$ denotes the set of $\Gamma$-equivariant surjective homomorphisms from the pro-prime-to-$|\Gamma|$ completion of $G_{\text{\O}}^T(K)$ to $H$. The $\frac{|H^\Gamma|}{\prod_{v\in T}|H^{\Gamma_v}|}$ term in the right hand side of this conjecture is the $H$-moment of the random group model we construct in \Cref{sec: random}, i.e. the average number of surjections from a random group in the random group model to $H$. 

 Sawin and Wood generalize Conjecture $1.3$ of \cite{lwzb} in Conjecture $1.1$ of \cite{SW} and Conjecture $1.3$ of \cite{SW2} by considering arbitrary base fields and allowing roots of unity in the base field to share factors with $|H|$. \Cref{conj: main intro} is a special case of Sawin and Wood's conjectures when $T=S_\infty$ and $E_{\Gamma, (\Gamma_v)_{v\in T}}'(D,Q)$ is replaced by $E_{\Gamma, (\Gamma_v)_{v\in T}}(D,Q)$ under some natural ordering, since we avoid the case when $|H|$ is not coprime to the order of roots of unity in the base field. In this paper, we generalize the random group model construction in \cite{lwzb} to give a random group model that has the same $H$-moments as the conjecture in \cite{SW2}, over arbitrary base fields (avoiding roots of unity as above). For discussion regarding $E'_{\Gamma, (\Gamma_v)_{v\in T}}(D,Q)$ vs $E_{\Gamma, (\Gamma_v)_{v\in T}}(D,Q)$ and the ordering of $\Gamma$-extensions, we refer the reader to the discussion after \Cref{conj: Fq(t) case}, especially \Cref{rmk: malle} and \Cref{rmk: intermediate fields}.

 \Cref{conj: main intro} speculates the $H$-moments of the distribution of $G_{\text{\O}}^T(K)$; we also conjecture the probabilities, as in Conjecture 1.3 of \cite{lwzb} and Conjecture 7.1 of \cite{LW2}. By a direct application of Sawin's result in \cite{saw} on uniqueness of moments in distributions of random pro-$\mathcal{C}$ $\Gamma$-groups for finite $\mathcal{C}$, we obtain that the moment version of \Cref{conj: main intro} for all pro-$\mathcal{C}$ $\Gamma$-groups $H$ implies the probability version for all such $H$ in \Cref{thm: moment implies probability}.  

 It would be interesting to study the function field case of \Cref{conj: main intro} as in \cite{lwzb} and \cite{LW2}. A version of the Cohen-Lenstra-Martinet heuristics for function fields goes back to the conjecture of Friedman and Washington in \cite{FW}, and progress on it was made by Achter, Ellenberg, Landesman, Levy, Lipnowski, Liu, Sawin, Tsimerman, Venkatesh, Westerland, and Wood, through works such as \cite{Ach}, \cite{EVW}, \cite{LT}, \cite{LST}, \cite{lwzb}, \cite{LL}, \cite{LL2} to name a few contributors. Much of the prior work attacks the function field case by using an idea of Ellenberg, Venkatesh, and Westerland to turn this into a problem of counting Frobenius fixed components of Hurwitz spaces, and using a component invariant as in Wood's paper \cite{Woo2}. From there, one computes the limit by comparing the asymptotic count of Frobenius fixed components of Hurwitz spaces corresponding to the groups $\Gamma$ and $H\rtimes\Gamma$, using the theory of Schur covering groups; one needs to understand homological stability of Hurwitz spaces in order to replace the large $q$-limit with large $q$, as in the breakthrough works of Landesman and Levy in \cite{LL} and \cite{LL2}. We look forward to future work that prove versions of \Cref{conj: main intro} for function fields. 

\subsection*{Outline of the paper}
    In \Cref{sec: notation}, we compile notation that will be used throughout this paper. In \Cref{sec: presentation}, we prove \Cref{thm: main presentation} using results of \cite{Liu2}. We also use \Cref{thm: main presentation} to recover the classical random matrix interpretation of the Cohen-Lenstra-Martinet heuristics on class groups. In \Cref{sec: conjectures} we give precise statements of \Cref{conj: main intro} and its variations. In \Cref{sec: random}, we construct the random group model and show this model gives rise to a well-defined countably additive Borel measure $\mu_{\underline{\Gamma}}$ on a certain set of isomorphism classes of profinite groups $\mathcal{P}$ that has the structure of a topological space. We also compute the measure at closed sets constructed as a decreasing intersection of basic open sets. In \Cref{sec: moment}, we compute the $H$-moments of the distribution determined by $\mu_{\underline{\Gamma}}$, which will be the conjectured $H$-moments in \Cref{conj: main intro}. \Cref{sec: random} and \Cref{sec: moment} are direct generalizations of Section $4$-$6$ of \cite{lwzb} and Section $6$ of \cite{LW2} where we allow non-admissible relators. 
 \subsection*{Acknowledgments}
    I would like to thank Yuan Liu for providing helpful comments and encouraging me to write up this paper. I would also like to thank Melanie Matchett Wood for comments and suggesting that we can allow finitely many finite primes to split completely in these results. I am grateful for Cruz Castillo, Peter Koymans, Will Sawin, and Jiuya Wang for comments on an early draft of this paper. I was supported by the UIUC Campus Research Board award RB23063 under Yuan Liu, the GAANN Fellowship, and the Paul T. Bateman Fellowship.

\section{Notation}\label{sec: notation} 
 Throughout this paper, $\Gamma$ will be a finite group. All groups will be profinite unless stated otherwise. For a positive integer $N$, an $N'$-$\Gamma$-group is a profinite group with a continuous $\Gamma$-action where the order of all of its finite quotients is coprime to $N$. Isomorphisms between $\Gamma$-groups will mean $\Gamma$-equivariant isomorphisms. The set $\mathcal{C}$ will denote a set of isomorphism classes of $\Gamma$-groups, and $\overline{\mathcal{C}}$ will denote the smallest set containing $\mathcal{C}$ closed under subquotients and direct products. We stress that we do not allow arbitrary group extensions in the definition of $\overline{\mathcal{C}}$. The pro-$\mathcal{C}$ completion of a $\Gamma$-group $G$, denoted $G^{\mathcal{C}}$, is the maximal quotient of $G$ so that its finite quotients are all in $\overline{\mathcal{C}}$. The group $G$ is level $\mathcal{C}$ or pro-$\mathcal{C}$ if $G^{\mathcal{C}}=G$. The pro-$N'$ completion of $G$ is the maximal quotient of $G$ whose order is coprime to $N$, and will be denoted $G_{N'}$. We use $|G|$ or $\#G$ to denote the supernatural order of the profinite group $G$, which is just the cardinality of $G$ when $G$ is finite. 
     
 The group $F_n(\Gamma)$, abbreviated $F_n$, will denote the free profinite $|\Gamma|'$-$\Gamma$-group on $n$ generators, which is the free profinite $|\Gamma|'$-group on $n|\Gamma|$ generators $x_{i,\gamma}$ for $1\leq i\leq n, \gamma\in\Gamma$, with the action of $\gamma'\in\Gamma$ given by $\gamma'(x_{i,\gamma})=x_{i,\gamma'\gamma}$. The group $\mathcal{F}_n(\Gamma)$, shortened to $\mathcal{F}_n$, is the closed subgroup of $F_n$ topologically generated by elements of the form $x_{i,\gamma}^{-1}\gamma'(x_{i,\gamma})$. We call $\mathcal{F}_n(\Gamma)$ the free admissible $\Gamma$-group on $n$ generators.
 
 The profinite group $\mathcal{F}_n$ is not required to be coprime to $2$ because this is not needed in the construction of the random group model. However, all presentations of Galois groups we consider will be quotients of the pro-$\mathcal{C}$ completion $\mathcal{F}_n^{\mathcal{C}}$ for $\mathcal{C}$ consisting of groups coprime to $2$. 
     
 Similarly, if a $|\Gamma|'$-$\Gamma$-group $G$ is generated by elements of the form $g^{-1}\gamma(g)$, we call $G$ an admissible $\Gamma$-group (see Sections $2$ and $3$ of \cite{lwzb}). If there are finitely many such $g$, then we call $G$ finitely admissibly generated (see Section $5$ of \cite{LW2}). Alternatively, given a $\Gamma$-group $G$, 
     \begin{equation}
         G \text{ is admissible }\Leftrightarrow (|G|, |\Gamma|)=1, \quad G_\Gamma=1,
     \end{equation}
 where we recall that $G^\Gamma$ is the set of elements of $G$ fixed by $\Gamma$, and $G_\Gamma$ is the maximal quotient of $G$ fixed by $\Gamma$. We also define the function 
     \begin{equation}Y:G\rightarrow G^{|\Gamma|} \quad \text{via}\quad  g\mapsto (g^{-1}\gamma(g))_{\gamma\in\Gamma}.\end{equation}

 For a $\Gamma$-group $G$ and subsets $S_1, S_2\subset G$, let $[Y(S_1),S_2]_{G\rtimes\Gamma}$ be the normal $G\rtimes\Gamma$-subgroup of $G$ generated by the coordinates of the elements of $Y(S_1)$ and the elements of $S_2$. When the groups $G, \Gamma$ are clear from context, we omit the subscript and just write $[Y(S_1),S_2]$. If $S_1, S_2$ is a finite ordered tuple (a finite list) of elements of $G$, we use the same notation $[Y(S_1),S_2]$ to denote the normal $G\rtimes \Gamma$-subgroup generated by the coordinates of $Y(s_1)$ and $s_2$, for all $s_i$ appearing in a coordinate of $S_i$.

 In the category of $|\Gamma|'$-$\Gamma$-groups, the functor $G\mapsto G^\Gamma$ is exact. Furthermore, we will extensively use that for a finite group $G$ in this category and any subgroup $\Gamma_0\subset \Gamma$, we have $|Y(G^{\Gamma_0})|=|G^{\Gamma_0}|/|G^\Gamma|$.  
     
 For a global field $K$, $\Cl(K)$ will be the class group of $K$, and $\mu(K)$ will be the group of roots of unity in $K$, while $\mu_\ell$ is the group of $\ell$th roots of unity with the action of the absolute Galois group $G_K$. Moreover, $K^{\text{sep}}$ will be a fixed separable closure of $K$ which contain all of our extensions (if $K/Q$ is a finite extension, then $Q^{\text{sep}}=K^{\text{sep}}$).
    
 For a $\Gamma$-extension $K/Q$, $G_{\text{\O}}^T(K)$ is the Galois group of the maximal unramified extension of $K$ split completely at the places above $T$, $K_{\text{\O}}^T/K$, where $T$ will be a finite set of set of places of $Q$. We sometimes abuse notation and identify $T$ with $T(K)$, the set of places above $T$ in $K$, but this should not cause any confusion. For a number field $Q$, set $r$ and $s$ to be the number of real and complex places of $Q$, respectively. For a finite prime $\mathfrak{p}$ of $\mathbb{Q}$ or $\mathbb{F}_q(t)$, $S_{\mathfrak{p}}$ is the set of places of $Q$ above $\mathfrak{p}$, and $S_{\infty}$ is the set of all archimedean primes of $Q$. Given a prime number $\ell$ coprime to char$(Q)$ and a finite $\mathbb{F}_\ell[G(K_{\text{\O}}/Q)]$-module $A$, define 
     \begin{equation}\delta_{K/Q,\text{\O}}(A):=\dim_{\mathbb{F}_\ell}H^2(G_{\text{\O}}(K),A)^\Gamma-\dim_{\mathbb{F}_\ell}H^1(G_{\text{\O}}(K),A)^\Gamma.\end{equation}
 Also, if $G_T(K)$ denotes the Galois group of the maximal unramified outside $T$ extension $K_T/K$, and $A$ is a finite $G(K_T/Q)$-module coprime to char$(Q)$, then 
     \begin{equation}
         \chi_{K/Q,T}(A):=\frac{\#H^0(G_T(K),A)^\Gamma \#H^2(G_T(K),A)^{\Gamma}}{\#H^1(G_T(K),A)^\Gamma}.
     \end{equation}
     
 Note that by looking at a finite extension of $K$, we can check that the $H^1$ terms are finite so we never have to worry about $\infty-\infty$ or $\infty/\infty$ occurring in these definitions, although the quantities $\delta_{K/Q,\text{\O}}(A), \: \chi_{K/Q, T}(A)$ are potentially infinite. If $A,B$ are both irreducible $\mathbb{F}_\ell[G(K_{\text{\O}}/Q)]$-modules, then $1_B(A)$ is the indicator function that is $1$ when $B\cong A$ as $\mathbb{F}_\ell[G(K_{\text{\O}}/Q)]$-modules and $0$ otherwise. The multiplicity $m(\mathcal{C},n,G,A)$ for a level $\mathcal{C}$ $\Gamma$-group $G$ is the number of isomorphic factors of $A$ appearing in the relations module of a given presentation $\mathcal{F}_n^{\mathcal{C}}\twoheadrightarrow G$, as in the beginning of Section $4$ of \cite{lwzb}.  

 Finally, when given a $\Gamma$-extension of global fields $K/Q$ and a finite nonempty set of places $T$ as above, we set 
     \begin{equation}
         \Delta:=\begin{cases}
         |\Cl_T(Q)||\mu(K)||\Gamma| &\text{if $Q$ is a number field }\\
         \text{char}(Q)|\Cl_T(Q)||\mu(K)||\Gamma|&\text{if $Q$ is a global function field}.\end{cases}
     \end{equation}
 For convenience, we also set
     \begin{equation}
         \Delta_{\infty}:=|\Cl(Q)||\mu(K)||\Gamma|
     \end{equation}
 for number fields, which corresponds to $\Delta$ when $T=S_\infty$. While $\Delta$ depends on $K/Q$ and $T$, we suppress this from the notation for readability. 
     
 See \cite{lwzb}, \cite{Liu2}, \cite{LW2} for more detailed explanations of the notations given here. 

\section{\texorpdfstring{Presentation of $G_{\text{\O}}^T(K)^{\mathcal{C}}$}{Presentation of GOT(K)C}}\label{sec: presentation}
 In this section, we use Liu's results from Section $9$ of \cite{Liu2} to prove upper bounds for the multiplicity of irreducible modules associated to the relations in a pro-$\mathcal{C}$ presentation of $G_{\text{\O}}^T(K)^{\mathcal{C}}$. We obtain presentations in the desired form $[r_i^{-1}\gamma(r_i),r_j]$ by computing the probability that a Haar randomly chosen set of elements $r_i$ normally generates the relations, and showing that this probability is positive precisely when the multiplicities are all no more than the upper bounds we computed. 
 
 The key result is the following analogue of Lemma $11.1$ of \cite{Liu2} and Lemma $6.9$ of \cite{Liu} for pro-$\mathcal{C}$ completions without any restrictions on the primes above infinity. 
 \begin{lemma}\label{lem: main ineq}
    Suppose $Q$ is a number field and $K/Q$ a $\Gamma$-extension. For each place $v$ of $Q$, let $\Gamma_v\subset \Gamma$ be a decomposition group of $v$ in $K/Q$, and $A$ be an odd irreducible $|\Gamma|'$-$G(K_{\text{\O}}/Q)$-module. Then 
    \begin{equation}\delta_{K/Q,{\text{\O}}}(A)\leq \sum_{v\in S_{\infty}(Q)}\dim_{\mathbb{F}_\ell}A^{\Gamma_v}+1_{\mu_\ell}(A)-1_{\mathbb{F}_\ell}(A),\end{equation}
    where $\ell$ is the exponent of $A$. 
 \end{lemma}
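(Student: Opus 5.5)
We follow Liu's approach in Section $9$ of \cite{Liu2}, proving the lemma by comparing the unramified cohomology with cohomology unramified outside a large finite set $S$ and then applying the Poitou--Tate Euler characteristic formula. Since $|A|$ is coprime to $|\Gamma|$, the Hochschild--Serre spectral sequence for $G_{\text{\O}}(K)\trianglelefteq G(K_{\text{\O}}/Q)$ degenerates, so
\[H^i(G_{\text{\O}}(K),A)^\Gamma\cong H^i(G(K_{\text{\O}}/Q),A).\]
It therefore suffices to bound $h^2-h^1$ for the group $G(K_{\text{\O}}/Q)$, which is a quotient of $G_S(Q)$.

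Next we choose a finite set $S\supset S_\infty\cup S_\ell$ containing all places ramified in $K/Q$. We then compare $H^i(G(K_{\text{\O}}/Q),A)$ with $H^i(G_S(K),A)^\Gamma=H^i(G_S(Q)',A)$, where $G_S(Q)'$ is the Galois group of $K_S/Q$ for the part of $K_S$ whose degree over $K$ is prime to $|\Gamma|$. For $h^1$, inflation--restriction gives an exact sequence
\[0\to H^1(G_{\text{\O}}(K),A)^\Gamma\to H^1(G_S(K),A)^\Gamma\to \Big(\bigoplus_{w\in S(K)} H^1(I_w,A)\Big)^{\Gamma}.\]
The image is controlled by a Selmer/Greenberg--Wiles type comparison. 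For $h^2$, the analogue of Liu's Lemma $11.1$ bounds the unramified $H^2$ by $H^2(G_S)^\Gamma$ together with local $H^1(I_w,A)$ contributions. The cleanest route is to invoke the Greenberg--Wiles formula. Let $\mathcal{L}_v=H^1_{\mathrm{ur}}$ be the unramified local conditions for finite $v$, and $\mathcal{L}_v=H^1(G_{Q_v},A)$ for archimedean $v$ (zero anyway, since $A$ is odd). This gives
\[\frac{\#H^1_{\mathcal{L}}}{\#H^1_{\mathcal{L}^\perp}(A^*)}=\frac{\#H^0(A)}{\#H^0(A^*)}\prod_v\frac{\#\mathcal{L}_v}{\#H^0(G_{Q_v},A)}.\]
For finite $v$, $\#\mathcal{L}_v=\#H^0(G_{Q_v},A)$, so those factors cancel. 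Each archimedean $v$ contributes $1/\#A^{\Gamma_v}$, because odd order kills $H^1$ of a group of order $\le 2$. Meanwhile $H^0(A)$ contributes $1_{\mathbb{F}_\ell}(A)$ and $H^0(A^*)$ contributes $1_{\mu_\ell}(A)$, since $A$ is irreducible.

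Finally, we bound the unramified $h^2$ by the dual Selmer group. Here $H^2(G(K_{\text{\O}}/Q),A)$ injects, via the Hochschild--Serre and Shafarevich-type arguments in \cite{Liu2}, into a group whose dimension is at most $\dim H^1_{\mathcal{L}^\perp}(Q,A^*)$. This step should use the same inequality as Liu's Lemma $9.x$/$11.1$, stated there only for $Q=\mathbb{Q}$ or totally real fields. Combining this with the Greenberg--Wiles formula and the identification $H^1_{\mathcal{L}}=H^1(G(K_{\text{\O}}/Q),A)$ (unramified at every place) yields
\[\delta_{K/Q,\text{\O}}(A)\le \sum_{v\in S_\infty}\dim A^{\Gamma_v}+1_{\mu_\ell}(A)-1_{\mathbb{F}_\ell}(A).\]

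The main obstacle is the $H^2$ injection, because $G(K_{\text{\O}}/Q)$ is not a full $G_S$ and its cohomological dimension is uncontrolled. The approach is to pass to finite quotients: write $H^2(G(K_{\text{\O}}/Q),A)$ as a direct limit over finite unramified $\Gamma$-stable extensions $L/K$. Each class then corresponds to an embedding problem whose obstruction is detected, via Poitou--Tate for $G_S(L)$, by Selmer elements of $A^*$ with the dual (unramified) local conditions. The coprimality of $|A|$ and $|\Gamma|$ is needed to take $\Gamma$-invariants exactly. Oddness of $A$ is what makes the archimedean local terms equal to exactly $\dim A^{\Gamma_v}$ with no $H^1$ correction, which is where the general (non-totally-real) archimedean places enter.
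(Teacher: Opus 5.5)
Your proposal is correct and follows essentially the paper's route (its Selmer-group sketch of Proposition~9.4 of \cite{Liu2}, followed by the local computation): $H^1(G_{\text{\O}}(K),A)^\Gamma$ is identified with the unramified Selmer group, $\dim H^2(G_{\text{\O}}(K),A)^\Gamma$ is bounded by the dual Selmer group, and the Greenberg--Wiles formula finishes; your normalization of that formula makes the finite-place factors trivial directly, whereas the paper carries Liu's term $\epsilon_{K/Q,\text{\O}}(A)=[Q:\mathbb{Q}]\dim_{\mathbb{F}_\ell}A$ and cancels it against the archimedean $\#H^0(Q_v,A')$ terms. The $H^2$ bound you defer is not limited to $Q=\mathbb{Q}$ or totally real fields, since Proposition~9.4 of \cite{Liu2} is stated for any $\Gamma$-extension of number fields; it also follows directly from the five-term inflation--restriction--transgression sequence for $G(K_S/K_{\text{\O}})\to G_S(K)\to G_{\text{\O}}(K)$ together with Poitou--Tate with local conditions, so you do not need to pass to finite quotients or embedding problems.
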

 The proof of this lemma rests on the following special case of a proposition of \cite{Liu2}:
 \begin{proposition}[Proposition 9.4 of \cite{Liu2}, $S=\text{\O}$]\label{prop: Liu 9.4}
   Suppose $K/Q$ is a $\Gamma$-extension of number fields, and $A$ be an $\ell$-torsion $|\Gamma|'$-$G(K_{\text{\O}}/Q)$-module for $\ell$ a prime number. Set $T=S_\infty\cup S_\ell$. Then  
    \[\delta_{K/Q,\text{\O}}(A)\leq \log_{\ell}\left(\prod_{v\in S_\infty}\frac{\#\widehat{H}^0(Q_v,A')}{\#H^0(Q_v,A')}\right)+\dim_{\mathbb{F}_\ell}A'^{G(K_T/Q)}-\dim_{\mathbb{F}_\ell}A^{G(K_{\text{\O}}/Q)}+\epsilon_{K/Q,\text{\O}}(A),\]
    where $A'=\Hom(A,\mu_{\ell^\infty})$, the group $\widehat{H}^0(Q_v,A)$ denotes the Tate cohomology of $A$ with respect to $G_{Q_v}$, and $\epsilon_{K/Q,\text{\O}}(A)=-\sum_{v\in S_\ell}\log_\ell|\#A|_v$ for the normalized valuation $|\cdot|_v$. 
 \end{proposition}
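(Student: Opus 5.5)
The plan is to compare unramified cohomology with cohomology unramified outside $T=S_\infty\cup S_\ell$, and then to bound the latter using Tate's global Euler characteristic formula and Poitou--Tate duality, following Koch's proof of Shafarevich's bound. First I reduce to groups with base $Q$. Since $A$ is a $|\Gamma|'$-module, the Hochschild--Serre spectral sequence for $1\to G_{\text{\O}}(K)\to G(K_{\text{\O}}/Q)\to\Gamma\to1$ degenerates. This gives $H^i(G_{\text{\O}}(K),A)^\Gamma\cong H^i(G(K_{\text{\O}}/Q),A)$, and likewise $H^i(G_T(K),A)^\Gamma\cong H^i(G(K_T/Q),A)$. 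Thus $\delta_{K/Q,\text{\O}}(A)$ and $\chi_{K/Q,T}(A)$ are computed by cohomology of $G(K_{\text{\O}}/Q)$ and $G(K_T/Q)$. Note that $K_{\text{\O}}\subset K_T$, and $A$ is a $G(K_T/Q)$-module by inflation.

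Next I put $N=G(K_T/K_{\text{\O}})$ and use the five-term exact sequence
\[0\to H^1(G(K_{\text{\O}}/Q),A)\to H^1(G(K_T/Q),A)\to H^1(N,A)^{G(K_{\text{\O}}/Q)}\to H^2(G(K_{\text{\O}}/Q),A)\to H^2(G(K_T/Q),A).\]
Taking dimensions, I get
\[\delta_{K/Q,\text{\O}}(A)\le \dim H^2(G(K_T/Q),A)-\dim H^1(G(K_T/Q),A)+\dim H^1(N,A)^{G(K_{\text{\O}}/Q)}.\]
The first two terms are expressed through $\chi_{K/Q,T}(A)$ and $\dim A^{G(K_{\text{\O}}/Q)}=\dim H^0$. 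By Tate's global Euler characteristic formula,
\[\log_\ell\chi_{K/Q,T}(A)=\sum_{v\in S_\infty}\log_\ell\#H^0(Q_v,A)-\sum_{v\in S_\ell}\log_\ell|\#A|_v.\]
The second sum here produces $\epsilon_{K/Q,\text{\O}}(A)$, and the $H^0$ correction produces $-\dim A^{G(K_{\text{\O}}/Q)}$.

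For the term $H^1(N,A)^{G(K_{\text{\O}}/Q)}=\Hom_{G(K_{\text{\O}}/Q)}(N^{ab},A)$, I use that $N$ is normally generated by inertia groups at finite places of $T$. Since $A$ is $\ell$-torsion and unramified, only tame and wild inertia above $\ell$ (and tame inertia elsewhere) contribute. Such homomorphisms are therefore controlled by the local pieces $H^1(Q_v,A)/H^1_{nr}(Q_v,A)$ for $v\in T$ finite. By local duality these pieces are dual to $H^1_{nr}(Q_v,A')$. The archimedean places should contribute the ratio $\#\widehat H^0(Q_v,A')/\#H^0(Q_v,A')$. To see this, I compare the real local $H^2$ (Tate $\widehat H^0$) with $H^0$ coming from the Euler characteristic, using $\#\widehat H^0(Q_v,A)=\#\widehat H^0(Q_v,A')$ for cyclic groups of order $2$.

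Finally, the remaining global error is $\Sha^2_T(A)\cong\Sha^1_T(A')^\vee$ together with the global $H^0$ term of $A'$ from Poitou--Tate. I will bound it by $\dim A'^{G(K_T/Q)}$, using the nine-term sequence to absorb the $H^2$ contribution beyond the local terms.

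I expect the main obstacle to be this bookkeeping of local terms. One must check that the local contributions at places above $\ell$ cancel exactly against the $\epsilon$ term, rather than only up to an additional $H^0(Q_v,A')$. One must also check that the only leftover global term is $\dim A'^{G(K_T/Q)}$, with no extra $\Sha^1$ contribution. I would first try to handle this by replacing the crude five-term estimate with the Poitou--Tate sequence for the Selmer structure "unramified at $T$ finite, arbitrary at $\infty$". I would then apply the Greenberg--Wiles Euler characteristic formula. That formula gives precisely a difference of Selmer and dual-Selmer dimensions plus local terms, and the dual $H^0$ term appears naturally as $A'^{G(K_T/Q)}$.
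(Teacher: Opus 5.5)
Your comparison with $K_T$ can be made to work, but as written there is a genuine gap, and it sits exactly at the ``main obstacle'' you flag. Put $\mathcal{G}=G(K_{\text{\O}}/Q)$, $\mathcal{G}_T=G(K_T/Q)$ and $N=G(K_T/K_{\text{\O}})$. Even with correct local bookkeeping, $\dim H^1(N,A)^{\mathcal{G}}\le\sum_{v\mid\ell}\bigl(\dim H^0(Q_v,A')-\log_\ell|\#A|_v\bigr)+\sum_{v\ \text{real}}\dim H^1(Q_v,A)$. With this, the five-term estimate $\delta_{K/Q,\text{\O}}(A)\le\dim H^2(\mathcal{G}_T,A)-\dim H^1(\mathcal{G}_T,A)+\dim H^1(N,A)^{\mathcal{G}}$ only gives the claimed right-hand side plus $\sum_{v\mid\ell}\dim H^0(Q_v,A')+\sum_{v\ \text{real}}\dim H^1(Q_v,A)-\dim A'^{\mathcal{G}_T}$. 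This excess is positive in general, e.g.\ for $A=\mathbb{F}_\ell$ as soon as two places $v\mid\ell$ have $\mu_\ell\subset Q_v$.

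The repair you sketch does not remove the excess. $\cyrBe^2_T(A)\cong\cyrBe^1_T(A')^\vee$ is not controlled by $\dim A'^{\mathcal{G}_T}$. The nine-term sequence by itself only rewrites $\dim H^2(\mathcal{G}_T,A)$. The Greenberg--Wiles formula only compares $\Sel_Q(A,\mathcal{L})$ with $\Sel_Q(A',\mathcal{L}^\perp)$ and says nothing about $H^2(\mathcal{G},A)$. The missing idea is a global bound on $H^2$. In the paper it is $\#H^2(G_{\text{\O}}(K),A)^\Gamma\le\#\Sel_Q(A',(H^1_{\text{ur}}(Q_v,A)^\perp)_v)$, obtained from Liu's Poitou--Tate sequence with local conditions. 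In your route it is the fact that the inflation $H^2(\mathcal{G},A)\to H^2(\mathcal{G}_T,A)$ lands in $\cyrBe^2_T(A)$.

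To see this, take $v\mid\ell$ and let $\widetilde{L}$ be the maximal unramified extension of $(K_{\text{\O}})_{\overline{v}}$. The localization at $v$ factors through $H^2(G(\widetilde{L}/Q_v),A)$. That group is an extension of $\widehat{\mathbb{Z}}$ by an inertia group of order dividing $|\Gamma|$, so its $H^2$ with $\ell$-torsion coefficients vanishes. For archimedean $v$, either $H^2(Q_v,A)=0$, or ($\ell=2$, $|\Gamma|$ odd) $v$ splits completely in $K_{\text{\O}}$. Hence you may replace $\dim H^2(\mathcal{G}_T,A)$ by $\dim\cyrBe^2_T(A)=\dim H^2(\mathcal{G}_T,A)-\sum_{v\in T}\dim H^2(Q_v,A)+\dim A'^{\mathcal{G}_T}$. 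Since $\dim H^2(Q_v,A)$ equals $\dim H^0(Q_v,A')$ for $v\mid\ell$ and $\dim H^1(Q_v,A)$ for $v$ real, the excess cancels exactly. Relatedly, the local condition at $\infty$ must be $0$, not ``arbitrary''. That is what produces $\#\widehat{H}^0(Q_v,A')$, and it matters when $\ell=2$.

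Separately, your accounting of $\epsilon_{K/Q,\text{\O}}(A)$ rests on two errors that happen to cancel. First, Tate's formula gives $\log_\ell\chi_{K/Q,T}(A)=\sum_{v\in S_\infty}\dim H^0(Q_v,A)-[Q:\mathbb{Q}]\dim A=\sum_{v\in S_\infty}\dim H^0(Q_v,A)-\epsilon_{K/Q,\text{\O}}(A)$. So the Euler characteristic contributes $-\epsilon$, not $+\epsilon$. Second, for $v\mid\ell$ the quotient $H^1(Q_v,A)/H^1_{\text{ur}}(Q_v,A)$ is dual to $H^1_{\text{ur}}(Q_v,A)^\perp$, not to $H^1_{\text{ur}}(Q_v,A')$. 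Its dimension is $\dim H^0(Q_v,A')-\log_\ell|\#A|_v$, and this is where a $+\epsilon$ actually appears.

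The two $\epsilon$'s cancel, and the corrected argument ends at $\sum_{v\in S_\infty}\dim H^0(Q_v,A)+\dim A'^{\mathcal{G}_T}-\dim A^{\mathcal{G}}$. The form in the statement then follows from the identity $\dim H^0(Q_v,A)+\dim H^0(Q_v,A')=[Q_v:\mathbb{R}]\dim A+\dim\widehat{H}^0(Q_v,A')$ at each archimedean $v$. The archimedean places do not contribute the ratio $\#\widehat{H}^0(Q_v,A')/\#H^0(Q_v,A')$ by themselves. With these repairs, your Koch-style comparison with $K_T$ is a legitimate alternative to the paper's Selmer-group argument, but the proposal as it stands does not establish the inequality.
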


 Actually, \cite{Liu2} states this proposition with the product term replaced by $\chi_{K/Q,T}(A)$ and proves a generalization of the global Euler-Poincare characteristic formula (\cite{Liu2}, Proposition $7.1$), but for the purposes of giving a proof sketch, this form is more convenient for us. 

 \begin{proof}[Proof sketch of \Cref{prop: Liu 9.4}]
    See \cite{Liu2} Proposition $9.4$ for the proof. We provide an alternative proof sketch here that is essentially equivalent using the language of Selmer groups. 

    Recall (see for example \cite{MS}, \cite{MS2}) that for a number field $Q$, and a finite $G_Q$-module $A$, local conditions $\mathcal{L}$ consist of a tuple of groups $\mathcal{L}=(\mathcal{L}_v)_v$ for each place $v$ of $Q$ so that $\mathcal{L}_v\subset H^1(Q_v,A)$, and $\mathcal{L}_v=H^1_{\text{ur}}(Q_v,A)$ for all but finitely many places $v$. Here $H^1_{\text{ur}}(Q_v,A):=H^1(G(\widetilde{Q_v}/Q_v),A^{I_v})\subset H^1(Q_v, A)$, the group $I_v$ is the absolute inertia group of $Q_v$, and $\widetilde{Q_v}/Q_v$ is the maximal unramified extension of $Q_v$. Then the Selmer group of $A$ with local conditions $\mathcal{L}$ is defined to be the kernel of the natural map induced by the localization map:
    \begin{equation}
        \Sel_Q(A,\mathcal{L})=\ker\left(H^1(Q,A)\rightarrow \prod_vH^1(Q_v,A)/\mathcal{L}_v\right).
    \end{equation}
    Taking the collection $\mathcal{L}_v=H^1_{\text{ur}}(Q_v, A)$ for all $v$ is called the unramified local conditions. 
    
    For a finite $G(K_{\text{\O}}/Q)$-module $A$, in fact $H^1(G_{\text{\O}}(K),A)=\Sel_K(A,(H^1_{\text{ur}}(K_w,A))_w)$, where $w$ ranges over places of $K$. This follows from comparing the inflation-restriction sequences of global and local cohomology groups and showing that the natural map 
    \[H^1(G_{K_{\text{\O}}},A)^{G_{\text{\O}}(K)}\rightarrow \prod_wH^1(I_w , A)^{G(\widetilde{K_w}/K_w)}\]
    is injective. Moreover, using that $|\Gamma|$ is coprime to the order of $A$, we get 
    \[H^1(G_{\text{\O}}(K),A)^\Gamma=\Sel_K(A, (H^1_{\text{ur}}(K_w,A))_w))^\Gamma=\Sel_Q(A, (H^1_{\text{ur}}(Q_v,A))_v).\]

    Next, given local conditions $\mathcal{L}=(\mathcal{L}_v)_v$, one can define the dual local conditions $\mathcal{L}^\perp:=(\mathcal{L}_v^\perp)_v$ for the $G_Q$-module $A'=\Hom(A,{Q^{\text{sep}}}^\times)$ to be the annihilator of $\mathcal{L}_v\subset H^1(Q_v, A)$ under the local Tate duality pairing. In particular, $\mathcal{L}_v^\perp\subset H^1(Q_v, A')$. Once again, by the coprimality of $|\Gamma|$ and $|A|$, 
    \[\Sel_K(A', (H^1_{\text{ur}}(K_w,A)^\perp)_w))^\Gamma=\Sel_Q(A', (H^1_{\text{ur}}(Q_v,A)^\perp)_v).\]

    Also, we can show that $\#H^2(G_{\text{\O}}(K),A)^\Gamma\leq \#\Sel(A',(H^1_{\text{ur}}(K_w,A)^\perp)_w)^\Gamma$. This essentially follows by comparing the inflation-restriction-transgression sequence with a modified version of the Poitou-Tate long exact sequence where we take into account local conditions (see Proposition $8.5$ of \cite{Liu2}) and using that taking $\Gamma$-invariants is exact on $\Gamma$-modules with $|A|$-torsion because $|\Gamma|$ and $|A|$ are coprime. 

    These facts give
    \begin{equation}
        \frac{\#H^2(G_{\text{\O}}(K),A)^\Gamma}{\#H^1(G_{\text{\O}}(K),A)^\Gamma}\leq \frac{\Sel_Q(A', (H^1_{\text{ur}}(Q_v,A)^\perp)_v)}{\Sel_Q(A, (H^1_{\text{ur}}(Q_v,A))_v)}.
    \end{equation}
    Now by an application of the Wiles formula (see \cite{NSW} Proposition 8.7.9), 
    \begin{equation}
        \frac{\Sel_Q(A', (H^1_{\text{ur}}(Q_v,A)^\perp)_v)}{\Sel_Q(A, (H^1_{\text{ur}}(Q_v,A))_v)}=\frac{\#H^0(Q,A')}{\#H^0(Q,A)}\prod_v\frac{\#H^1_{\text{ur}}(Q_v, A)^\perp}{\#H^0(Q_v, A')}.
    \end{equation}
    When $v|\infty$, $H^1_{\text{ur}}(Q_v,A)=0$, so \[\#H^1_{\text{ur}}(Q_v,A)^\perp=\#H^1(Q_v,A')=\#\widehat{H}^0(Q_v,A').\]
    When $v\nmid \infty$, local Tate duality and the local Euler-Poincare characteristic formula gives
    \begin{equation}
        \frac{\#H^1_{\text{ur}}(Q_v,A)^\perp}{\#H^0(Q_v,A')}=\frac{\#H^1(Q_v,A)}{\#H^1_{\text{ur}}(Q_v,A)\#H^0(Q_v,A')}=\frac{\#H^1(Q_v,A)}{\#H^0(Q_v,A)\#H^2(Q_v,A)}=|\#A|_v^{-1}.
    \end{equation}
    Since the only primes $v$ for which this quantity is nontrivial is $v|\ell$, after taking $\log_\ell$ of both sides, this recovers \Cref{prop: Liu 9.4}.
 \end{proof}
 
 Note that in the proof sketch above, we only needed $A$ to be finite and of order coprime to $|\Gamma|$. In addition, the proof works when $Q$ is a global function field as long as $|\Gamma|$ and $|A|$ are coprime to char$(Q)$ as well; this recovers one direction of Proposition 9.3 in \cite{Liu2}. We also have $\Sel_Q(A', (H^1_{\text{ur}}(Q_v,A)^\perp))=\cyrBe_{\text{\O}}(Q, A)^\vee$, which is the group in Definition $8.1$ of \cite{Liu2}. When $A=\mathbb{F}_\ell$, this coincides with the group $\cyrBe_{\text{\O}}(Q)^\vee$ used in Shafarevich and Koch's proof \cite{Sha}, \cite{Koc} that $G_{\text{\O}}(Q)(\ell)$ is finitely presented. In this case, $\Sel_Q(\mu_\ell, (H^1_{\text{ur}}(Q_v,\mathbb{F}_\ell)^\perp))$ is sometimes referred to as the Selmer group of the number field $Q$, or a Kummer group (see \cite{NSW} Section 9.1). 
 \begin{proof}[Proof of \Cref{lem: main ineq}]
    We compute the right hand side of the inequality in \Cref{prop: Liu 9.4} for odd irreducible $A$. Using the definition of $\epsilon_{K/Q, \text{\O}}(A)$ in \cite{Liu2}, the last term works out to be 
    \[\epsilon_{K/Q,\text{\O}}(A)=-\sum_{v\in S_\ell}\log_\ell|\#A|_v=\sum_{v\in S_\ell}e_vf_v v_\ell(\#A)=[Q:\mathbb{Q}]\dim_{\mathbb{F}_\ell}A.\] 
    The group $A^{G(K_{\text{\O}}/Q)}=0$ unless $A=\mathbb{F}_\ell$, and $A'^{G(K_T/Q)}=0$ unless $A=\mu_\ell$. 
    
    We are left to understand the terms coming from archimedean primes. The group $\widehat{H}^0(Q_v,A')=0$ and $H^0(Q_v,A')=\Hom_{D_v}(A,\mu_\ell)$, where $D_v$ is a decomposition group of $v$ in $\overline{Q}/Q$. If $Q_v=\mathbb{C}$ then $\dim_{\mathbb{F}_\ell}H^0(Q_v,A')=\dim_{\mathbb{F}_\ell}A$. On the other hand if $Q_v=\mathbb{R}$, the action of the decomposition group above $v$ can be identified with the action of $\Gamma_v$, and $A^{\Gamma_v}$ must be sent to $0$ for any $f\in H^0(Q_v,A')$ since $\ell$ is odd. Thus, 
    \[\dim_{\mathbb{F}_\ell}\Hom_{D_v}(A,\mu_\ell)=\dim_{\mathbb{F}_\ell}\Hom(A/A^{\Gamma_v},\mu_\ell)=\dim_{\mathbb{F}_\ell}A/A^{\Gamma_v}.\]
    Therefore, 
    \[\delta_{K/Q,\text{\O}}(A)\leq -s\dim_{\mathbb{F}_\ell}A-\sum_{v\in S_{\infty}}\dim_{\mathbb{F}_\ell}A/A^{\Gamma_v}+1_{\mu_\ell}(A)-1_{\mathbb{F}_\ell}(A)+(r+2s)\dim_{\mathbb{F}_\ell}A,\]
    which gives the lemma. 
 \end{proof}
 
 Now let $Q$ be a global field. Let $T$ be a nonempty finite set of places of $Q$ containing all the archimedean places, and recall that $K_{\text{\O}}^T/K$ is the maximal unramified extension of $K$ split completely at the primes in $T$ and $G_{\text{\O}}^T(K)$ is the Galois group of this extension over $K$. Suppose now that $A$ is an odd finite irreducible $|\Gamma|'$-$G(K_{\text{\O}}^T/Q)$-module whose order is coprime to char$(Q)$ if $Q$ is a function field. Define 
 \begin{equation}\delta_{K/Q,\text{\O}}^T(A):=\dim_{\mathbb{F}_\ell}H^2(G_{\text{\O}}^T(K),A)^\Gamma-\dim_{\mathbb{F}_\ell}H^1(G_{\text{\O}}^T(K),A)^\Gamma.\end{equation}
 \begin{corollary}\label{cor: main ineq function field}
    Under the above set up, for each place $v$ of $Q$ let $\Gamma_v$ be a decomposition group of $v$ in $K/Q$. Then
   \begin{equation} \delta_{K/Q,\text{\O}}^T(A)\leq \sum_{v\in  T}\dim_{\mathbb{F}_\ell}A^{\Gamma_v}+1_{\mu_\ell}(A)-1_{\mathbb{F}_\ell}(A)\end{equation}
   except when $K$ is a function field of genus $0$, in which case we remove the $1_{\mu_\ell}(A)$ term. 
 \end{corollary}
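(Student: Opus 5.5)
We follow the Selmer-group sketch of \Cref{prop: Liu 9.4}, with $G_{\text{\O}}(K)$ replaced by $G_{\text{\O}}^T(K)$. The only change is in the local conditions: at $v\in T$ we impose the split condition $\mathcal{L}_v=0$, and at $v\notin T$ we keep the unramified condition. Since $|\Gamma|$ is prime to $|A|$ (and to char$(Q)$ in the function field case), the comparison of inflation--restriction sequences gives
\[H^1(G_{\text{\O}}^T(K),A)^\Gamma=\Sel_Q(A,\mathcal{L}).\]
The proof of Proposition 8.5 of \cite{Liu2} (Poitou--Tate with local conditions, combined with inflation--restriction--transgression and exactness of $\Gamma$-invariants) gives
\[\#H^2(G_{\text{\O}}^T(K),A)^\Gamma\leq \#\Sel_Q(A',\mathcal{L}^\perp).\]
Here $\mathcal{L}_v^\perp=H^1(Q_v,A')$ for $v\in T$, and $\mathcal{L}_v^\perp=H^1_{\text{ur}}(Q_v,A)^\perp$ otherwise. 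This is the step most likely to need care. Surjectivity/injectivity of the relevant local maps at the split places must be rechecked, but the split condition only makes $H^1$ smaller and the dual Selmer group larger, so the same argument should go through. As noted after the proof sketch, this argument also applies to global function fields.

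Next we apply the Wiles formula (\cite{NSW} 8.7.9):
\[\delta^T_{K/Q,\text{\O}}(A)\leq \dim H^0(Q,A')-\dim H^0(Q,A)+\sum_v\Big(\dim\mathcal{L}_v^\perp-\dim H^0(Q_v,A')\Big).\]
For $v\notin T$ finite, the computation in the sketch gives $\log_\ell|\#A|_v^{-1}$ for the local term. This vanishes because $v\nmid\ell$ whenever $v\notin T$: in the number field case $T\supset S_\infty$ only, so we must keep the $v\mid\ell$ terms. For $v\in T$ we have $\mathcal{L}_v^\perp=H^1(Q_v,A')$.
\begin{itemize}
\item If $v$ is archimedean, $\dim H^1(Q_v,A')-\dim H^0(Q_v,A')=-\dim A/A^{\Gamma_v}$ (with $\Gamma_v$ trivial for complex $v$), exactly as in the proof of \Cref{lem: main ineq}.
\item If $v$ is finite in $T$, local Euler characteristic and duality give
\[\dim H^1(Q_v,A')-\dim H^0(Q_v,A')=\dim H^2(Q_v,A')+\log_\ell|\#A|_v^{-1}=\dim H^0(Q_v,A)+\log_\ell|\#A|_v^{-1}.\]
Since $v$ is unramified in $K$ and the action factors through the decomposition group of $K_{\text{\O}}^T/Q$ at $v$, which is $\Gamma_v$ (split completely above $v$), we get $\dim H^0(Q_v,A)=\dim A^{\Gamma_v}$.
\end{itemize}

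Summing in the number field case, the $v\mid\ell$ terms total $[Q:\mathbb{Q}]\dim A=(r+2s)\dim A$. These cancel the archimedean contributions, leaving $\sum_{v\in S_\infty}\dim A^{\Gamma_v}$ (the complex places contribute $-2\dim A+\dim A$ after combining, as in \Cref{lem: main ineq}). Adding the finite $v\in T$ gives $\sum_{v\in T}\dim A^{\Gamma_v}$. The global terms are handled as before: $H^0(Q,A)$ has dimension $1_{\mathbb{F}_\ell}(A)$ and $H^0(Q,A')$ has dimension $1_{\mu_\ell}(A)$, by irreducibility.

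In the function field case there are no archimedean places and no $v\mid\ell$ terms, so the sum is directly $\sum_{v\in T}\dim A^{\Gamma_v}$. The global term $\dim H^0(Q,A')$ must be handled more carefully. It equals $1_{\mu_\ell}(A)$ as before, but a sharper bound via $H^2$ for genus $0$ is needed. There, when $A=\mu_\ell$, the dual Selmer class coming from $\mu_\ell$ dies: $\Sel_Q(\mu_\ell,\mathcal{L}^\perp)$ surjects onto $\Cl_T(K)$-type data that vanish, and the $H^0(Q,A')$ contribution is cancelled by the corresponding term. We would check this case by explicit comparison with the Kummer sequence for $\mathbb{P}^1$-type fields, and we expect it to be the main obstacle.
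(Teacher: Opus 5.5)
Apart from the genus-$0$ clause, your argument is a valid alternative to the paper's. Imposing the strict condition $\mathcal{L}_v=0$ at $v\in T$ and rerunning the Selmer/Wiles computation does give $\sum_{v\in T}\dim_{\mathbb{F}_\ell}A^{\Gamma_v}+1_{\mu_\ell}(A)-1_{\mathbb{F}_\ell}(A)$.

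The step you flagged, $\#H^2(G_{\text{\O}}^T(K),A)^\Gamma\le\#\Sel_Q(A',\mathcal{L}^\perp)$, does hold. The transgressed part embeds into $\bigoplus_v H^1(K_v,A)/\mathcal{L}_v$. The image of $H^2(G_{\text{\O}}^T(K),A)$ in $H^2(G_S(K),A)$ is locally trivial, because decomposition groups are trivial above $T$ and unramified procyclic elsewhere.

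The paper argues differently. It applies the five-term sequence to $G_{\text{\O}}(K)\twoheadrightarrow G_{\text{\O}}^T(K)$ and bounds $\Hom(G(K_{\text{\O}}/K_{\text{\O}}^T),A)^{G(K_{\text{\O}}^T/Q)}$ by $\sum_{v\in T\setminus S_\infty}\dim_{\mathbb{F}_\ell}A^{\Gamma_v}$. It then quotes the bound on $\delta_{K/Q,\text{\O}}(A)$ from \Cref{lem: main ineq} and Liu's Proposition 9.3.

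Your version has two small slips, neither affecting the final tally:
\begin{itemize}
\item $v\in T$ need not be unramified in $K$. You only need that $K_{\text{\O}}^T/K$ splits above $v$, so the decomposition group in $G(K_{\text{\O}}^T/Q)$ is $\Gamma_v$.
\item The local term at a complex place is $-\dim_{\mathbb{F}_\ell}A$, not $-\dim_{\mathbb{F}_\ell}A/A^{\Gamma_v}=0$.
\end{itemize}

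The genuine gap is the genus-$0$ clause, which asserts a stronger bound (when $A\cong\mu_\ell$), and your method cannot reach it. The Wiles formula is an identity, and $H^1(G_{\text{\O}}^T(K),A)^\Gamma=\Sel_Q(A,\mathcal{L})$ is an equality. So all the slack lies in $\#H^2\le\#\Sel_Q(A',\mathcal{L}^\perp)$, and that inequality is not sharp in genus $0$.

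For example, take $K=Q=\mathbb{F}_q(t)$, $T=\{\infty\}$, an odd prime $\ell\mid q-1$, and $A=\mathbb{F}_\ell\cong\mu_\ell$. Then $G_{\text{\O}}^T(K)=1$, so $\delta^T_{K/Q,\text{\O}}(A)=0$, which is exactly the genus-$0$ bound $1-1$. But $\Sel_Q(\mu_\ell,\mathcal{L}^\perp)\cong\mathbb{F}_q^\times/\mathbb{F}_q^{\times\ell}\neq 0$, so your chain yields only $\delta^T\le 1$. In particular the dual Selmer class does not die, and your proposed mechanism fails.

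The missing input is the one the paper's route inherits from the bound on $\delta_{K/Q,\text{\O}}$:
\begin{itemize}
\item If $K$ has genus $0$, then $G_{\text{\O}}(K)\cong\widehat{\mathbb{Z}}$ is the group of constant field extensions. It has cohomological dimension $1$, so $H^2(G_{\text{\O}}(K),A)=0$.
\item Frobenius $F$ is central in $G(K_{\text{\O}}/Q)$. So on the irreducible $A$ it acts either trivially or with $F-1$ invertible, which gives $\delta_{K/Q,\text{\O}}(A)=-1_{\mathbb{F}_\ell}(A)$ exactly.
\item The five-term sequence for $G_{\text{\O}}(K)\twoheadrightarrow G_{\text{\O}}^T(K)$ then adds at most $\sum_{v\in T}\dim_{\mathbb{F}_\ell}A^{\Gamma_v}$.
\end{itemize}
You need this, or an equivalent direct computation with the finite cyclic group $G_{\text{\O}}^T(K)$, to finish the genus-$0$ case.
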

 \begin{proof}
 Using the inflation-restriction-transgression sequence for the module $A$ and the group extension $G(K_{\text{\O}}/Q)\rightarrow G(K_{\text{\O}}^T/Q)$, we see that 
 \[\delta_{K/Q,\text{\O}}^T(A)\leq \delta_{K/Q,\text{\O}}(A)+\dim_{\mathbb{F}_\ell}H^1(G(K_{\text{\O}}/K_{\text{\O}}^T),A)^{G(K_{\text{\O}}^T/Q)}.\]
 In this case, we can identify the second $H^1$ on the right-hand side with $\Hom$. Such homomorphisms factor through the $|\Gamma|'$-quotient of $G(K_{\text{\O}}/K_{\text{\O}}^T)$ because $|A|$ is coprime to $|\Gamma|$. Using that the cyclic decomposition groups of the primes lying above $T$ generate $G(K_{\text{\O}}/K_{\text{\O}}^T)$ and that conjugation by elements of $G(K_{\text{\O}}^T/Q)$ transitively acts on the primes of $K_{\text{\O}}$ above any fixed prime of $T$, we see that 
 \[\dim_{\mathbb{F}_\ell}H^1(G(K_{\text{\O}}/K_{\text{\O}}^T)_{|\Gamma|'},A)^{G(K_{\text{\O}}^T/Q)}\leq \sum_{v\in T\backslash S_{\infty}}\dim_{\mathbb{F}_\ell}\Hom(G((K_{\text{\O}})_{\overline{v}}/K_{w})_{|\Gamma|'},A)^{\Gamma_v}\]
 where $\overline{v},w,v$ are places of the fields $K_{\text{\O}}, K,Q$, respectively, all chosen compatibly with respect to the choice of the decomposition group $\Gamma_v$ as well. Also, $(K_{\text{\O}})_{\overline{v}}, K_{w},Q_v$ are completions of the respective fields at $\overline{v}, w,v$, respectively. Also recall that the subscript $|\Gamma|'$ under each of the Galois groups indicates the pro-$|\Gamma|'$ completion of the respective Galois groups, and $(K_{\text{\O}})_{\overline{v}}'$ is the maximal prime-to-$|\Gamma|$ unramified subextension of $(K_{\text{\O}})_{\overline{v}}/K_{w}$. The extension $(K_{\text{\O}})_{\overline{v}}'/Q_v$ is Galois and the natural map \[G((K_{\text{\O}})_{\overline{v}}'/K_w)\hookrightarrow G((K_{\text{\O}})_{\overline{v}}'/Q_v)\twoheadrightarrow G(Q_v^{\text{nr}}/Q_v)\]
 is injective, where $Q_v^{\text{nr}}$ is the maximal unramified subextension of $(K_{\text{\O}})_{\overline{v}}'/Q_v$. 
 
 Both groups $G(Q_v^{\text{nr}}/Q_v)$ and $G((K_{\text{\O}})_{\overline{v}}'/K_w)$ are Galois groups of unramified extensions, so they are procyclic. This is enough to guarantee that the $\Gamma_v$-action on $G((K_{\text{\O}})_{\overline{v}}/K_{w})_{|\Gamma|'}$ is trivial, so 
 \[\Hom(G((K_{\text{\O}})_{\overline{v}}/K_{w})_{|\Gamma|'},A)^{\Gamma_v}=\Hom(G((K_{\text{\O}})_{\overline{v}}/K_{w})_{|\Gamma|'},A^{\Gamma_v}),\]
 and hence 
 \[\dim_{\mathbb{F}_\ell}\Hom(G((K_{\text{\O}})_{\overline{v}}/K_{w})_{|\Gamma|'},A)^{\Gamma_v}\leq \dim_{\mathbb{F}_\ell}A^{\Gamma_v}.\]
 The corollary follows after using \Cref{lem: main ineq} and Proposition $9.3$ of \cite{Liu2}. 
 \end{proof}
 
 Recall that $\Cl_T(Q)$ is the $T$-class group of $Q$, which is the class group of $Q$ quotiented by all ideal classes represented by finite primes in $T$. By class field theory, this corresponds to the maximal unramified abelian extension of $Q$ split completely at the primes above $T$. As long as $T$ is nonempty, $\Cl_T(Q)$ is a finite group. We now prove an analogue of Proposition $2.2$ in \cite{lwzb}. 
\begin{proposition}[Analogue of Proposition $2.2$ of \cite{lwzb}]\label{prop: admissibility}
    Let $K/Q$ be a $\Gamma$-extension of global fields, and $T$ be a nonempty set of primes containing all archimedean places. Set $N=|G_{\text{\O}}^T(Q)||\Gamma|$, and let $G_{\text{\O}}^T(K)_{N'}$ be the pro-prime-to-$N$ completion of $G_{\text{\O}}^T(K)$, that is the maximal quotient of $G_{\text{\O}}^T(K)$ that is not divisible by any prime dividing $|\Gamma|$ or the supernatural order of $G_{\text{\O}}^T(Q)$. Then the group $G_{\text{\O}}^T(K)_{N'}$ and all its $\Gamma$-quotients are admissible. 
\end{proposition}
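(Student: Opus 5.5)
Write $G=G_{\text{\O}}^T(K)_{N'}$; we need $(|G|,|\Gamma|)=1$ and $G_\Gamma=1$. The first condition holds by construction, since $G$ is a pro-prime-to-$N$ group and $|\Gamma|$ divides $N$. The plan is therefore to show that the $\Gamma$-coinvariants vanish. Admissibility passes to $\Gamma$-quotients: if $G\twoheadrightarrow G'$ is $\Gamma$-equivariant, then $G'$ is prime to $|\Gamma|$ and $G'_\Gamma$ is a quotient of $G_\Gamma$. So it suffices to treat $G$ itself.

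First I set up the group-theoretic framework. The extension $K_{\text{\O}}^T/Q$ is Galois, because $K/Q$ is Galois and the conditions "unramified" and "split completely above $T$" are stable under $G_Q$-conjugation. Let $L$ be the fixed field of the kernel of $G_{\text{\O}}^T(K)\to G$; it is characteristic, so $L/Q$ is Galois. Schur--Zassenhaus then gives $G(L/Q)\cong G\rtimes\Gamma$, with the $\Gamma$-action on $G$ well defined up to conjugation. Under this isomorphism, $G_\Gamma$ is the quotient of $G\rtimes\Gamma$ by the normal subgroup generated by the elements $g^{-1}\gamma(g)$. Equivalently, $G_\Gamma\times\Gamma\cong (G\rtimes\Gamma)/[G,\Gamma]$. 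Hence $G_\Gamma$ is realized by the Galois extension $M/Q$, where $M\subset L$ is the fixed field of $[Y(G)]$. This $M$ contains $K$, and $G(M/Q)\cong G_\Gamma\times\Gamma$, so there is a subfield $M_0\subset M$ fixed by $\Gamma$ with $G(M_0/Q)\cong G_\Gamma$ and $M=M_0K$, $M_0\cap K=Q$.

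The main step, and the point needing care, is to show that $M_0/Q$ is unramified and split completely at $T$. For a place $v$ of $Q$, the inertia group $I\subset G_\Gamma\times\Gamma$ of $M/Q$ at a place above $v$ meets $G_\Gamma$ trivially, because $M/K$ is unramified. Consequently $I$ injects into $\Gamma$ under the projection. Since $G_\Gamma$ has order prime to $|\Gamma|$, its image under the other projection, to $G_\Gamma$, is trivial: the projection of $I$ to $G_\Gamma$ is a quotient of $I$, so its order divides $|\Gamma|$, and it is also a subgroup of $G_\Gamma$, so its order is prime to $|\Gamma|$. The image of $I$ in $G(M_0/Q)=G_\Gamma$ is exactly this projection, hence $M_0/Q$ is unramified at $v$. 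The same argument applies to decomposition groups at $v\in T$: the decomposition group $D$ of $M/Q$ satisfies $D\cap G_\Gamma=1$ because $M/K$ splits completely above $T$ (for archimedean $v$ this is the statement that the places stay real). So $D$ projects trivially to $G_\Gamma$, and $M_0/Q$ splits completely at $v$. One should check that the coprimality argument is legitimate for infinite profinite $G_\Gamma$. It is: the image of $I$ is a closed subgroup whose supernatural order both divides $|\Gamma|$ and is prime to it, so it is trivial.

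It follows that $M_0\subset Q_{\text{\O}}^T$, so $G_\Gamma$ is a quotient of $G_{\text{\O}}^T(Q)$. But $G_\Gamma$ is a pro-prime-to-$N$ group, while every finite quotient of $G_{\text{\O}}^T(Q)$ has order dividing the supernatural order of $G_{\text{\O}}^T(Q)$, which divides $N$. Therefore $G_\Gamma=1$, and $G$ is admissible; by the first paragraph, so are all its $\Gamma$-quotients.
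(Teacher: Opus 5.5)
Your proof is correct and follows essentially the same route as the paper's. You pass to $(G\rtimes\Gamma)/[Y(G)]\cong G_\Gamma\times\Gamma$ and take the $\Gamma$-fixed field $M_0$; inertia groups, and decomposition groups above $T$, inject into $\Gamma$ and so project trivially to the prime-to-$|\Gamma|$ group $G_\Gamma$; and coprimality with $|G_{\text{\O}}^T(Q)|$ then forces $G_\Gamma=1$. One wording slip: the quotient of $G\rtimes\Gamma$ by the normal subgroup generated by the $g^{-1}\gamma(g)$ is $G_\Gamma\times\Gamma$, not $G_\Gamma$, as your very next sentence correctly states.
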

\begin{proof}
    The same proof as Proposition $2.2$ of \cite{lwzb} works. Indeed if $L/K$ denotes the extension with Galois group $G=G_{\text{\O}}^T(K)_{N'}$, then $L/Q$ is Galois with Galois group $G\rtimes\Gamma$. The quotient by the closed normal $\Gamma$-group $N\subset G$ generated by all elements of the form $g^{-1}\gamma(g)$ for $\gamma\in\Gamma, g\in G$ has trivial $\Gamma$-action, so $(G\rtimes\Gamma)/N\cong \overline{G}\times \Gamma$. In other words, $G(L^N/Q)\cong \overline{G}\times \Gamma$. Taking $\Gamma$-fixed points of $L^N$, we see that $L^{N\rtimes \Gamma}/Q$ is a $\overline{G}$-extension. Since all inertia groups divide the order of $|\Gamma|$ and $G$ is relatively prime to $|\Gamma|$, the extension $L^{N\rtimes \Gamma}/Q$ is unramified. All decomposition groups above primes in $T$ also divide $|\Gamma|$, so $L^{N\rtimes\Gamma}/Q$ is also completely split at those primes. Since we made sure that $|G|$ is relatively prime to $|G_{\text{\O}}^T(Q)|$ as well, $\overline{G}=1$, which proves the admissibility of $G$. 
\end{proof}

 Note that we have little control on the group $G_{\text{\O}}^T(Q)$. For example, we do not know if this group has supernatural order divisible by only finitely many primes or not. However, if the quotient of $G_{\text{\O}}^T(K)_{N'}$ we were considering was solvable, then at the end of the proof above, we only needed $|G|$ to be relatively prime to the order of the finite group $\Cl_T(Q)$ in order to conclude that $\overline{G}=1$. This gives us a convenient corollary of the proof:
 \begin{corollary}\label{cor: admissibility}
    The maximal pro-solvable $\Gamma$-quotient of $G_{\text{\O}}^T(K)$ that has order prime to $|\Cl_T(Q)||\Gamma|$ is admissible. 
 \end{corollary}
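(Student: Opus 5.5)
We follow the argument in the proof of \Cref{prop: admissibility}, replacing the appeal to $|G|$ being coprime to $|G_{\text{\O}}^T(Q)|$ with class field theory applied to an abelian quotient. Let $G$ denote the maximal pro-solvable $\Gamma$-quotient of $G_{\text{\O}}^T(K)$ whose order is prime to $M:=|\Cl_T(Q)||\Gamma|$. Let $L/K$ be the corresponding extension. Since $G$ is a characteristic (hence $\Gamma$-stable) quotient, $L/Q$ is Galois. Because $(|G|,|\Gamma|)=1$, Schur--Zassenhaus gives $G(L/Q)\cong G\rtimes\Gamma$. By the alternative characterization of admissibility recalled in \Cref{sec: notation}, it suffices to show $G_\Gamma=1$. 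Since every $\Gamma$-quotient of $G$ is again pro-solvable of order prime to $M$, it is enough to prove this for $G$ itself.

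Let $N\subset G$ be the closed normal subgroup generated by the elements $g^{-1}\gamma(g)$, and put $\overline{G}=G/N=G_\Gamma$. Exactly as in the proof of \Cref{prop: admissibility}, $(G\rtimes\Gamma)/N\cong\overline{G}\times\Gamma$. The fixed field $F:=L^{N\rtimes\Gamma}$ is therefore a Galois $\overline{G}$-extension of $Q$. Every inertia group of $L/Q$ injects into $\Gamma$, because $L/K$ is unramified. Hence the image of each inertia group in $\overline{G}$ has order dividing both $|\Gamma|$ and $|\overline{G}|$, which are coprime, so that image is trivial and $F/Q$ is unramified. Likewise, for places in $T$, the decomposition groups of $L/Q$ inject into $\Gamma_v\subset\Gamma$, because the places of $K$ above $T$ split completely in $L$. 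So $F/Q$ splits completely at $T$.

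The remaining step, and the only place solvability enters, is to show that $\overline{G}=1$. Suppose $\overline{G}\neq 1$. Since $\overline{G}$ is a nontrivial pro-solvable group, its abelianization $\overline{G}^{\text{ab}}$ is nontrivial; if it were trivial, the derived series would stall at $\overline{G}$, and for a profinite group this is checked on finite quotients. The corresponding subextension $F^{\text{ab}}/Q$ is an abelian extension that is unramified and split completely at $T$. By class field theory it is a quotient of $\Cl_T(Q)$, so $|\overline{G}^{\text{ab}}|$ divides $|\Cl_T(Q)|$. But $|\overline{G}^{\text{ab}}|$ also divides $|G|$, which is prime to $|\Cl_T(Q)|$. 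Hence $\overline{G}^{\text{ab}}=1$, a contradiction. Therefore $G_\Gamma=1$, and $G$ is admissible.

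I expect the main subtle point to be this abelianization step, namely making sure the pro-solvable nontrivial group $\overline{G}$ genuinely has a nontrivial abelian quotient. The cleanest route is to pass to a nontrivial finite quotient of $\overline{G}$, which is solvable and therefore has a nontrivial abelianization.
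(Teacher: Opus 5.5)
Your proposal is correct and follows the paper's own argument: rerun the proof of \Cref{prop: admissibility} to obtain an unramified, $T$-split $\overline{G}$-extension of $Q$, then use pro-solvability to extract a nontrivial abelian quotient, which class field theory forces to be a quotient of $\Cl_T(Q)$ and hence trivial by coprimality. One small point: your stated reason for reducing to $G$ itself does not follow as written. The correct reason is that $G_\Gamma=1$ implies $(G/N')_\Gamma=1$ for every $\Gamma$-quotient $G/N'$, although the corollary only asks about $G$ in any case.
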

 Since pro-odd groups are pro-solvable, we can apply the corollary above to quotients of $G_{\text{\O}}^T(K)$ that are $(2|\Cl_T(Q)||\Gamma|)'$-$\Gamma$ groups. We can now state the main upper bound on the multiplicity of admissible presentations. 

 \begin{corollary}\label{cor: multiplicity bound}
 Under the setup of \Cref{cor: main ineq function field}, suppose also that $\mathcal{C}$ is a set of $\left(2|\Cl_T(Q)||\Gamma|\right)'$-$\Gamma$ groups such that $G_{\text{\O}}^T(K)^{\mathcal{C}}$ is finitely admissibly generated. In addition, assume that $A$ is an odd irreducible $|\Gamma|'$-$G_{\text{\O}}^T(K)^{\mathcal{C}}\rtimes\Gamma$-module. Then we have
 \begin{equation}m(\mathcal{C},n,G_{\text{\O}}^T(K)^\mathcal{C},A)\leq \frac{n\dim_{\mathbb{F}_\ell}A-(n+1)\dim_{\mathbb{F}_\ell}A^\Gamma+\sum_{v\in  T}A^{\Gamma_v}+1_{\mu_\ell}(A)}{\dim_{\mathbb{F}_\ell}\Hom_{G_{\text{\O}}^T(K)^\mathcal{C}\rtimes\Gamma}(A,A)}.\end{equation}
 \end{corollary}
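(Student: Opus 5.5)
We reduce to the multiplicity formula of \cite{lwzb} (and its pro-$\mathcal{C}$ form in \cite{Liu2}), which expresses $m(\mathcal{C},n,G,A)$ through $H^1$ and $H^2$ of $G=G_{\text{\O}}^T(K)^{\mathcal{C}}$. Then we feed in the inequality of \Cref{cor: main ineq function field}.

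First, by \Cref{cor: admissibility}, $G$ is admissible, since $\mathcal{C}$ consists of $(2|\Cl_T(Q)||\Gamma|)'$-groups and pro-odd groups are pro-solvable. Being finitely admissibly generated, it is a quotient $\mathcal{F}_n^{\mathcal{C}}\twoheadrightarrow G$ for $n$ large. Let $R$ be the kernel and $\overline{R}$ the quotient of $R$ by the intersection of its maximal $\mathcal{F}_n^{\mathcal{C}}\rtimes\Gamma$-normal subgroups that give level-$\mathcal{C}$ extensions.

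Write $h^i(A)=\dim_{\mathbb{F}_\ell}H^i(G,A)^\Gamma$. Following Section 4 of \cite{lwzb} (Liu's version, Lemma 5.x of \cite{Liu2}, for the pro-$\mathcal{C}$ case), we compare the five-term inflation–restriction sequence for $1\to R\to\mathcal{F}_n^{\mathcal{C}}\to G\to 1$, with coefficients $A$ and taking $\Gamma$-invariants (exact since $\gcd(|A|,|\Gamma|)=1$):
\[0\to H^1(G,A)^\Gamma\to H^1(\mathcal{F}_n^{\mathcal{C}},A)^\Gamma\to \Hom_{G\rtimes\Gamma}(\overline{R},A)\to H^2(G,A)^\Gamma .\]
Two inputs make this usable. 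The first is $\dim H^1(\mathcal{F}_n,A)^\Gamma=n(\dim A-\dim A^\Gamma)$, coming from the freeness of $\mathcal{F}_n$ as an admissible group (\cite{lwzb}, Section 3). The second is the fact that $\Hom_{G\rtimes\Gamma}(\overline{R},A)$ has dimension $m(\mathcal{C},n,G,A)\cdot\dim_{\mathbb{F}_\ell}\Hom_{G\rtimes\Gamma}(A,A)$. Together they give
\[m\cdot \dim\Hom_{G\rtimes\Gamma}(A,A)\le n(\dim A-\dim A^\Gamma)-h^1(A)+h^2(A).\]
The pro-$\mathcal{C}$ subtlety is that the relevant $H^2$ is that of the level-$\mathcal{C}$ quotient. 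Here we rely on Liu's result that the image of the transgression lands in $H^2$ of the pro-$\mathcal{C}$ group only via extensions in $\overline{\mathcal{C}}$; the inequality direction is all we need.

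Next we pass from $H^i(G_{\text{\O}}^T(K)^{\mathcal{C}},A)$ to $H^i(G_{\text{\O}}^T(K),A)$. $H^1$ agrees, since $A\rtimes G$ quotients are level $\mathcal{C}$ by the definition of the multiplicity. For $H^2$, inflation–restriction shows that $h^2$ of the quotient exceeds $h^2$ of the full group by at most the $\Gamma$-invariant $G$-equivariant homs from the kernel to $A$. The maximality of the pro-$\mathcal{C}$ completion forces these to be those giving extensions not in $\overline{\mathcal{C}}$, and this is exactly the bookkeeping done in \cite{Liu2}, Section 9. We therefore get $h^2-h^1$ for the pro-$\mathcal{C}$ group bounded by $\delta^T_{K/Q,\text{\O}}(A)$, up to the term that Liu absorbs. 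Applying \Cref{cor: main ineq function field} then gives
\[m\dim\Hom(A,A)\le n\dim A-n\dim A^\Gamma+\sum_{v\in T}\dim A^{\Gamma_v}+1_{\mu_\ell}(A)-1_{\mathbb{F}_\ell}(A).\]
If $A$ has trivial $G\rtimes\Gamma$-action it must be $\mathbb{F}_\ell$ (irreducible), and then $-1_{\mathbb{F}_\ell}(A)=-\dim A^\Gamma$. Otherwise $A^\Gamma$ is typically nonzero but $1_{\mathbb{F}_\ell}=0$. To reach the stated $-(n+1)\dim A^\Gamma$, the plan is to sharpen the exact sequence by keeping the $H^0$ term: the fixed points $A^{G\rtimes\Gamma}$ enter through $H^1$ of the admissible free group as $-\dim A^\Gamma$ extra. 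Concretely, the generators of $\mathcal{F}_n$ correspond to $Y$-images, so the $H^1$ count for $\mathcal{F}_n$ is $n\dim A-n\dim A^\Gamma$. The correction $-\dim A^\Gamma+\dim A^{G\rtimes\Gamma}$ arises from the $H^1(G,A)^\Gamma$ vs. crossed-homomorphism comparison, following the formula in \cite{lwzb}, Theorem 4.x.

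The main obstacle is precisely this step of matching the exact constants: the $-(n+1)\dim A^\Gamma$ and the cancellation of $1_{\mathbb{F}_\ell}$. The approach we would try first is to redo the \cite{lwzb} multiplicity computation with $H^1(\mathcal{F}_n\rtimes\Gamma,A)$ in place of $H^1(\mathcal{F}_n,A)^\Gamma$. Its dimension $n\dim A-(n+1)\dim A^\Gamma+\dim A^{G\rtimes\Gamma}$ comes from the semidirect-product $H^1$ containing $H^1(\Gamma,A)=0$ and coboundaries $A/A^{G\rtimes\Gamma}$. The $\dim A^{G\rtimes\Gamma}$ then cancels $-1_{\mathbb{F}_\ell}(A)$ exactly.
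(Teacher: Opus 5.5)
\textbf{There is a genuine gap, in the $H^2$ step.} Your route differs from the paper's. You apply the five-term sequence to $\mathcal{F}_n^{\mathcal{C}}\rtimes\Gamma\twoheadrightarrow P\rtimes\Gamma$, where $P:=G_{\text{\O}}^T(K)^{\mathcal{C}}$, and compare directly with $G_{\text{\O}}^T(K)$. The paper instead builds an auxiliary finitely generated $\Gamma$-quotient of $G_{\text{\O}}^T(K)$ with the same pro-$\mathcal{C}$ completion, compatible generation, and $h^2-h^1\le\delta^T_{K/Q,\text{\O}}(A)$. It then quotes Prop.~3.4, Cor.~4.5 and Prop.~5.4 of \cite{Liu2}, which bound the multiplicity through the cohomology of that auxiliary group, not of $P$. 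Your route can be repaired, but as written you conclude that $h^2(P)-h^1(P)\le\delta^T_{K/Q,\text{\O}}(A)$ ``up to the term that Liu absorbs'', and this is false in general. Take $K$ imaginary quadratic, $\mathcal{C}$ the abelian $\ell$-groups and $A=\mathbb{F}_\ell$. The group $\Gamma$ acts by $-1$ on $H^1(P,\mathbb{F}_\ell)$, so cup products give $\dim H^2(P,\mathbb{F}_\ell)^\Gamma\ge\binom{r}{2}$, where $r$ is the $\ell$-rank of $\Cl(K)$. But $H^1(P,\mathbb{F}_\ell)^\Gamma=0$, and \Cref{cor: main ineq function field} gives the bound $0$. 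The kernel of inflation that you yourself identified is exactly what makes $h^2(P)$ large.

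\textbf{The missing argument.} The five-term sequence only needs $\dim\mathrm{im}(\mathrm{tg})$, and you must show that inflation to $H^2(G(K_{\text{\O}}^T/Q),A)=H^2(G_{\text{\O}}^T(K),A)^\Gamma$ is injective on $\mathrm{im}(\mathrm{tg})$.
A nonzero class there comes from a nonsplit extension $E\rtimes\Gamma$ of $P\rtimes\Gamma$ by $A$, with $E$ a quotient of $\mathcal{F}_n^{\mathcal{C}}$ and hence of level $\mathcal{C}$.
If it inflated to $0$, there would be a lift $s\colon G(K_{\text{\O}}^T/Q)\to E\rtimes\Gamma$. With $N=\ker(G_{\text{\O}}^T(K)\to P)$, $s(N)$ is a $P\rtimes\Gamma$-submodule of $A$.
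If $s(N)=0$, $s$ descends to a section of the extension, a contradiction.
If $s(N)=A$, then $E$ is a level-$\mathcal{C}$ $\Gamma$-quotient of $G_{\text{\O}}^T(K)$ properly covering $P$, contradicting maximality of the pro-$\mathcal{C}$ completion.

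The same kind of argument gives $H^1(P,A)^\Gamma=H^1(G_{\text{\O}}^T(K),A)^\Gamma$ and $H^1(\mathcal{F}_n^{\mathcal{C}},A)^\Gamma=H^1(\mathcal{F}_n,A)^\Gamma$, provided $A\rtimes P$ is of level $\mathcal{C}$. That holds when $m>0$, since $A\rtimes P\cong(E\times_P E)/\Delta(A)$. When $m=0$ there is nothing to prove, because the right-hand side is nonnegative for $T\neq\emptyset$.

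\textbf{The $H^1$ count.} Your bookkeeping here is also confused. Since $\ell\nmid|\Gamma|$, $H^1(\mathcal{F}_n,A)^\Gamma=H^1(\mathcal{F}_n\rtimes\Gamma,A)$, so there is nothing to ``sharpen''. Your first value $n(\dim A-\dim A^\Gamma)$ is simply wrong when $A^\Gamma\neq0$: it is the dimension of $Z^1(\mathcal{F}_n,A)^\Gamma$, since a fixed $\Gamma$-map $\mathcal{F}_n\to P$ has $(|A|/|A^\Gamma|)^n$ lifts to $A\rtimes P$. You forgot to quotient by $B^1(\mathcal{F}_n,A)^\Gamma\cong(A/A^P)^\Gamma$, of dimension $\dim A^\Gamma-1_{\mathbb{F}_\ell}(A)$. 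The correct value $n\dim A-(n+1)\dim A^\Gamma+1_{\mathbb{F}_\ell}(A)$ is the one you reach at the end. With the repaired $H^2$ step this gives $m\dim\Hom_{P\rtimes\Gamma}(A,A)\le n\dim A-(n+1)\dim A^\Gamma+1_{\mathbb{F}_\ell}(A)+\delta^T_{K/Q,\text{\O}}(A)$. \Cref{cor: main ineq function field} then yields the stated bound, with the $1_{\mathbb{F}_\ell}(A)$ terms cancelling.
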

 \begin{proof}
    The proof proceeds in the exact same way as Sections 10.1-10.2 of \cite{Liu2} and Proposition $5.1$ of \cite{LW2}. We give a sketch.
    
    Indeed, following the argument in Proposition $5.1$, we can construct a $\Gamma$-group extension $G\twoheadrightarrow G_{\text{\O}}^T(K)^\mathcal{C}$ that has the following properties:
    \begin{enumerate}
        \item $G$ is a $\Gamma$-quotient of $G_{\text{\O}}^T(K)$ such that $G_{\text{\O}}^T(K)^\mathcal{C} = G^{\mathcal{C}}$.
        \item A subset of $G$ generates $G$ if and only if its image in $G_{\text{\O}}^T(K)^\mathcal{C}$ generates $G_{\text{\O}}^T(K)^\mathcal{C}$.
        \item $\dim_{\mathbb{F}_\ell}H^2(G,A)^\Gamma-\dim_{\mathbb{F}_\ell}H^1(G,A)^\Gamma\leq \delta_{K/Q,\text{\O}}^T(A)$.
    \end{enumerate}
    By $(2)$, the $\Gamma$-group $G$ is finitely admissibly generated. Therefore, we can construct a surjection $F_n\twoheadrightarrow G$ that restricts to a surjection $\mathcal{F}_n\twoheadrightarrow G$, so that pro-$\mathcal{C}$ completion gives 
    \[\mathcal{F}_n^\mathcal{C}\twoheadrightarrow G^\mathcal{C}=G_{\text{\O}}^T(K)^\mathcal{C},\]
    where we use $(1)$ in the last equality. Now use Proposition $3.4$, Corollary $4.5$ and Proposition $5.4$ of \cite{Liu2}, as well as $(3)$ and \Cref{cor: main ineq function field} to obtain the desired inequality.  
 \end{proof}

 Note that we cannot drop the hypothesis on being relatively prime to $|\Cl_T(Q)||\Gamma|$ since this is what guarantees admissibility of the group $G_{\text{\O}}^T(K)^{\mathcal{C}}$, but we can weaken the conditions of the corollary on groups in $\mathcal{C}$ to being solvable and order prime to $|\Cl_T(Q)||\Gamma|$. Theorem $6.4$ of \cite{Liu2} states that for all finite $\mathcal{C}$, $G_{\text{\O}}^T(K)^{\mathcal{C}}$ is finite, hence finitely admissibly generated if we impose the relative primeness condition on groups in $\mathcal{C}$. Other choices of $\mathcal{C}$ we can take so that $G_{\text{\O}}^T(K)$ is finitely admissibly generated are the set of all $p$ groups for some fixed prime $p\nmid |\Cl_T(Q)||\Gamma|$, the set of all nilpotent groups or the set of all abelian groups prime to $|\Cl_T(Q)||\Gamma|$, among others. 

 Next we provide a slight generalization of Proposition $4.3$ of \cite{lwzb} and Lemma $5.3$ of \cite{LW}. In particular, suppose we have $u+1$ subgroups $\Gamma_1,\Gamma_2,\dots, \Gamma_{u+1}\subset \Gamma$. Also, given a $\Gamma$-group $F$ and a finite $F\rtimes\Gamma$-module $G$, we denote
 \begin{equation}h_{F\rtimes\Gamma}(G):=\#\Hom_{F\rtimes\Gamma}(G,G).\end{equation}
 \begin{proposition}\label{prop: relation probability}
     Suppose that $1\rightarrow R\rightarrow F\rightarrow H \rightarrow 1$ is an exact sequence of profinite $|\Gamma|'$-$\Gamma$-groups such that $\displaystyle R\cong_{F\rtimes\Gamma}\prod_{j\in J}G_j^{m_j}$, where $G_j$ are finite irreducible $F\rtimes\Gamma$-groups such that for $j\neq j'$, $G_j\not\cong G_{j'}$ as $F\rtimes\Gamma$-groups, $m_j$ are finite, and $J$ is some indexing set. If $(r_1, \dots , r_{n+u+1})$ is chosen Haar randomly and uniformly from $R^n\times R^{\Gamma_1}\times R^{\Gamma_2}\times\cdots R^{\Gamma_{u+1}}$ for $n\geq 0$, we have 
     \begin{align}&\Prob ([Y(\{r_1,\dots,r_{n+1}\}),r_{n+2},\dots, r_{n+u+1}]_{F\rtimes\Gamma}=R)\nonumber \\
     &=\prod_{G_j\: \text{abelian}}\prod_{\ell=0}^{m_j-1}\left(1-\frac{h_{F\rtimes\Gamma}(G_j)^\ell|G_j^\Gamma|}{|Y(G_j)|^n\prod_{k=1}^{u+1}|G_j^{\Gamma_k}|}\right)\prod_{G_j \: \text{nonabelian}}\left(1-\frac{|G_j^\Gamma|}{|Y(G_j)|^n\prod_{k=1}^{u+1}|G_j^{\Gamma_k}|}\right)^{m_j}.
     \end{align}
     Moreover, there exists a choice of $(r_i)$ such that $[Y(\{r_1,\dots, r_{n+1}\}),\dots, r_{n+u+1}]_{F\rtimes\Gamma}=R$ if and only if the factors $\left(1-\frac{h_{F\rtimes\Gamma}(G_j)^\ell|G_j^\Gamma|}{|Y(G_j)|^n\prod_{k=1}^{u+1}|G_j^{\Gamma_k}|}\right), \: \left(1-\frac{|G_j^\Gamma|}{|Y(G_j)|^n\prod_{k=1}^{u+1}|G_j^{\Gamma_k}|}\right)^{m_j}$ appearing in the product are all nonzero. 
 \end{proposition}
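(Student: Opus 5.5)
We follow the strategy of Proposition $4.3$ of \cite{lwzb} and Lemma $5.3$ of \cite{LW}: reduce generation of $R$ to the statement that the chosen elements avoid every maximal proper $F\rtimes\Gamma$-normal subgroup of $R$, and then count by factors. Since $R\cong\prod_j G_j^{m_j}$ with each $G_j$ irreducible, a closed normal $F\rtimes\Gamma$-subgroup $N\subset R$ equals $R$ if and only if, for every $j$, the image of $N$ in the $G_j$-isotypic quotient $G_j^{m_j}$ is everything. For nonabelian $G_j$ the minimal normal structure forces the $F\rtimes\Gamma$-normal subgroups of $G_j^{m_j}$ to be products of some of the factors. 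So generation is equivalent to each of the $m_j$ projections $R\to G_j$ being surjective on $N$, and these events are independent. For abelian $G_j$, the quotient $G_j^{m_j}$ is a semisimple module over $\mathrm{End}_{F\rtimes\Gamma}(G_j)$, a finite field of size $h_{F\rtimes\Gamma}(G_j)$ by Schur's lemma. Because the projections to the different isotypic components are independent under Haar measure and only finitely many isotypic components matter for each finite quotient, we will reduce to a single $j$ and then pass to the inverse limit over finite quotients of $R$ by continuity of Haar measure.

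Fix $j$ and write $G=G_j$. First we analyze what one chosen element contributes. The map $r\mapsto Y(r)$ from $R$ (or from $R^{\Gamma_k}$) composed with the projection to $G^{m}$ is Haar-uniform on the image. Using exactness of $\Gamma$-invariants on $|\Gamma|'$-groups, the images of $r_1,\dots,r_n$ (from $R$) and $r_{n+1}$ (from $R^{\Gamma_1}$) are uniform in $Y(G^m)$ and $Y((G^m)^{\Gamma_1})$. The images of $r_{n+2},\dots$ are uniform in $(G^m)^{\Gamma_k}$. The key identity from \Cref{sec: notation} is $|Y(G^{\Gamma_0})|=|G^{\Gamma_0}|/|G^\Gamma|$, and in particular $|Y(G)|=|G|/|G^\Gamma|$.

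In the abelian case, we count the probability that the $\mathrm{End}$-span of the $F\rtimes\Gamma$-submodule generated by all the data is all of $G^m$. We identify $G^m\cong G\otimes_k k^m$ with $k=\mathrm{End}(G)$. Generation fails exactly when everything lies in the kernel of some nonzero $k$-linear functional $G^m\to G$. We then proceed inductively as in the standard count of surjectivity of random matrices. Suppose the first $\ell$ coordinates are already generated. The probability that the data land in a fixed hyperplane complement fails is the size of the set of admissible data in $G$ (one copy), divided by the total. In one copy, the data live in
\[
Y(G)^n\times Y(G^{\Gamma_1})\times\prod_{k\ge2}G^{\Gamma_k},
\]
which has size
\[
|Y(G)|^n\,\frac{|G^{\Gamma_1}|}{|G^\Gamma|}\prod_{k=2}^{u+1}|G^{\Gamma_k}|.
\]
The failure count at step $\ell$ is $h^\ell$, coming from the lifts of the functional. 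This yields the factor
\[
\left(1-\frac{h^\ell|G^\Gamma|}{|Y(G)|^n\prod_k|G^{\Gamma_k}|}\right),
\]
and multiplying over $\ell=0,\dots,m-1$ gives the abelian product. In the nonabelian case, each of the $m$ projections independently must fail to be killed. Since $G$ is nonabelian and irreducible, the normal $F\rtimes\Gamma$-closure of any nonidentity element is $G$. The only failure is that all data are trivial in that factor, which has probability $|G^\Gamma|/(|Y(G)|^n\prod_k|G^{\Gamma_k}|)$, and this gives the $m_j$-th power.

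The main obstacle is the abelian step: verifying that the data vectors in the $G$-isotypic part behave like independent uniform vectors so that the $h^\ell$ counting goes through, despite the mixture of $Y$-images and fixed-point sets. We will handle this by noting that each of $Y(G)$, $Y(G^{\Gamma_1})$ and $G^{\Gamma_k}$ is a $k$-subspace compatible with the diagonal $k^m$-structure, since $\mathrm{End}$ commutes with $\Gamma$. The final existence statement follows because the probability is a limit of products of finitely many factors, each of which lies in $[0,1)$ or is $0$. It is therefore positive if and only if every factor is nonzero. For infinite $J$ we additionally need convergence, which holds because $|Y(G_j)|^n$ grows. If positivity fails, no choice works, since the count is exact at a finite level.
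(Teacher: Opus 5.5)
Your finite-level computation is the same as the paper's. You reduce to one isotypic block $G^m$ and require each coordinate to be nontrivial in the nonabelian case. In the abelian case you condition on the first $\ell$ coordinates generating $G^\ell$, and count the $h_{F\rtimes\Gamma}(G)^\ell$ failing configurations $f\circ\pi$, $f\in\Hom_{F\rtimes\Gamma}(G^\ell,G)$, against the $|Y(G)|^n|Y(G^{\Gamma_1})|\prod_{k\geq 2}|G^{\Gamma_k}|$ possible ones. The gap is in the passage to infinite $J$, above all in the ``moreover'' statement.

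You argue that a generating tuple exists $\iff$ the probability is positive $\iff$ every factor is nonzero. You justify the second equivalence by saying the infinite product converges ``because $|Y(G_j)|^n$ grows.'' That justification fails:
\begin{itemize}
\item $n$ is fixed, and $n=0$ is allowed.
\item Nothing prevents $J$ from containing infinitely many pairwise non-isomorphic $G_j$ of the same order, e.g.\ one-dimensional $\mathbb{F}_p$-modules with distinct characters of $H$.
\end{itemize}
In that situation the factors are bounded away from $1$, so the product, and hence the probability, is $0$ although no factor vanishes. Yet by the statement itself a generating tuple exists; it just lies in a Haar-null set. So positive probability is neither the right criterion nor a usable route to existence.

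What is missing is a compactness argument. For finite $J'\subset J$, let $R(J')$ be the set of tuples whose images generate $R_{J'}=\prod_{j\in J'}G_j^{m_j}$.
\begin{itemize}
\item $R(J')$ is closed, being the preimage of a subset of a finite discrete set.
\item $R(J')$ is nonempty when all factors are nonzero, by your finite computation.
\item $R(J_1'\cup J_2')\subset R(J_1')\cap R(J_2')$, so the family has the finite intersection property.
\end{itemize}
By compactness of $R^n\times R^{\Gamma_1}\times\cdots\times R^{\Gamma_{u+1}}$, the intersection $\bigcap_{J'}R(J')$ is nonempty. Any tuple in it generates $R$, since a closed subgroup surjecting onto every $R_{J'}$ is all of $R$.

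The other direction is fine if you phrase it as ``some factor vanishes $\Rightarrow$ generation already fails in some finite $R_{J'}$.'' It is not fine as ``if positivity fails, no choice works,'' which is false by the example above.

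A smaller point concerns the probability formula itself. ``Continuity of Haar measure'' along finite subsets of $J$ is ordinary $\sigma$-continuity only when $J$ is countable. For uncountable $J$ you need either the $\tau$-smoothness of the Radon measure, or the paper's observation: uncountably many factors $<1$ force infinitely many of them below $1-\frac{1}{N}$ for some $N$, so both sides are $0$.
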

 \begin{proof}
    The proof proceeds in the exact same way as the proof of Proposition $4.3$ of \cite{lwzb} and Lemma $5.3$ of \cite{LW2}. We sketch the argument and record the minor differences that arise. 

    First suppose $|J|<\infty$. Indeed by Lemma $5.5$ of \cite{LW}, 
    \begin{align*}&\Prob([Y(\{r_1,\dots,r_{n+1}\}),r_{n+2},\dots, r_{n+u+1}]_{F\rtimes\Gamma}=R)\\
    &=\prod_{j\in J}\Prob([Y(\{g_{ij}\}_{1\leq i \leq n+1}),\{g_{ij}\}_{n+2\leq i\leq n+u+1}]_{F\rtimes\Gamma}=G_j^{m_j}).\end{align*}
    Here $g_{ij}$ is the $G_j^{m_j}$ coordinate of $r_i\in R\cong\prod_{j\in J}G_j^{m_j}$. This reduces us to the case when $R=G^m$, where $G$ is a finite irreducible $F\rtimes\Gamma$-group and $m$ is a positive integer. 

    In the nonabelian case, the probability that $[Y(\{r_1,\dots,r_{n+1}\}),r_{n+2},\dots, r_{n+u+1}]_{F\rtimes\Gamma}=G^m$ is the probability that the projection to each coordinate is nontrivial by Lemma $5.8$ of \cite{LW}, which is  
    \[\prod_{\ell=1}^m\Prob(Y(r_{i\ell})\neq 1 \text{ for  some } 1\leq i\leq n+1, \text{ or }r_{i\ell}\neq 1\text{ for some }n+2\leq i\leq n+u+1).\]
    Here $r_{i\ell}$ is the $\ell$th coordinate of $r_i\in G^m$. The probability that $Y(r_{i\ell})=1$ is $\frac{1}{|Y(G)|}$ for $1\leq i \leq n$ and $\frac{1}{|Y(G^{\Gamma_1})|}=\frac{|G^\Gamma|}{|G^{\Gamma_1}|}$ for $i=n+1$. The probability that $r_{i\ell}=1$ is $\frac{1}{|G^{\Gamma_{i-n}}|}$ for $n+2\leq i \leq n+u+1$. Since these events are independent, the above probability is 
    \[\prod_{\ell=1}^m\left(1-\frac{|G^{\Gamma}|}{|Y(G)|^n\prod_{k=1}^{u+1}|G^{\Gamma_k}|}\right)=\left(1-\frac{|G^{\Gamma}|}{|Y(G)|^n\prod_{k=1}^{u+1}|G^{\Gamma_k}|}\right)^m.\]

    Similarly, in the abelian case, by Lemma $5.8$ of \cite{LW}, a set of elements generate $G^m$ as an $F\rtimes\Gamma$-group if and only if the projection onto the first $m-1$ factors is surjective and the projection onto the last factor does not factor through the projection onto the first $m-1$ factors. Our approach is to iteratively count the complement.
    
    Let $\pi$ be the projection onto the first $m-1$ factors. Suppose that the projection of $r_i$ onto the first $m-1$ factors has been chosen already so that the coordinates of $Y(\pi(r_i))$ for $1\leq i \leq n+1$ and $\pi(r_i)$ for $n+2\leq i \leq n+u+1$ generate $G^{m-1}$. For each map $f\in\Hom_{F\rtimes\Gamma}(G^{m-1},G)$, specifying $f$ amounts to specifying what it does on coordinates of $Y(\pi(r_i))$ for $1\leq i \leq n+1$ and $\pi(r_i)$ for $n+2\leq i \leq n+u+1$ by the generating hypothesis. Since $f$ is $\Gamma$-equivariant, we see that if $\pi(r_i)$ were fixed by $\Gamma_k$ then its image under $f$ would be fixed under $\Gamma_k$. If we let $f$ assign the last coordinate of $r_i$, by Lemma $5.8$ of \cite{LW}, the set of coordinates of $Y(r_i)$ for $1\leq i \leq n+1$ and $r_i$ for $n+2\leq i \leq n+u+1$ does not generate $G^m$. 
    
    On the other hand, if we started with a non-generating set consisting of coordinates of $Y(r_i)$ for $1\leq i \leq n+1$ projecting to the chosen $Y(\pi(r_i))$, and $r_i$ for $n+2\leq i \leq n+u+1$ projecting to the chosen $\pi(r_i)$, then by Lemma $5.8$ of \cite{LW} again, we would be able to find $f$ so that the projection to the last coordinate factors as $f\circ\pi$, and such $f$ is unique. Thus, the number of non-generating tuples $(Y(r_1),\dots, Y(r_{n+1}),r_{n+2},\dots, r_{n+u+1})$ with fixed projection to the first $m-1$ coordinates is $\#\Hom_{F\rtimes\Gamma}(G^{m-1},G)=h_{F\rtimes\Gamma}(G)^{m-1}$. The number of possible tuples is $|Y(G)|^n|Y(G^{\Gamma_1})|\prod_{k=2}^{u+1}|G^{\Gamma_k}|$, so the probability that such a tuple generates $R=G^m$ is 
    \[\left(1-\frac{h_{F\rtimes\Gamma}(G)^{m-1}}{|Y(G)|^n|Y(G^{\Gamma_1})|\prod_{k=2}^{u+1}|G^{\Gamma_k}|}\right).\]
    Multiplying all the conditional probabilities together gives 
    \[\Prob ([Y(\{r_1,\dots,r_{n+1}\}),r_{n+2},\dots, r_{n+u+1}]_{F\rtimes\Gamma}=G^m)=\prod_{\ell=0}^{m-1}\left(1-\frac{h_{F\rtimes\Gamma}(G)^{\ell}|G^\Gamma|}{|Y(G)|^n\prod_{k=1}^{u+1}|G^{\Gamma_k}|}\right).\]
    
    In the case when $|J|$ is infinite, $[Y(\{r_1,\dots,r_{n+1}\}),r_{n+2},\dots, r_{n+u+1}]_{F\rtimes\Gamma}=R$ if and only if the image of $[Y(\{r_1,\dots,r_{n+1}\}),r_{n+2},\dots, r_{n+u+1}]_{F\rtimes\Gamma}$ in the finite set $R_{J'}=\prod_{j\in J'}G_j^{m_j}$ of $R$, generates $R_{J'}$ for all finite subsets $J'\subset J$. This follows from looking at the product topology on $R$. Denote by $R(J')\subset R^n\times R^{\Gamma_1}\times\cdots\times R^{\Gamma_{u+1}}$ the set of tuples $(r_i)$ whose coordinate-wise image $(\overline{r_i})$ in $R_{J'}$ satisfies $[Y(\{\overline{r_1},\dots, \overline{r_{n+1}}\}),\dots, \overline{r_{n+u+1}}]_{F\rtimes\Gamma}=R_{J'}$. Note that $R(J')$ is a closed subset because it is the preimage of a subset of $R_{J'}^n\times\cdots R_{J'}^{\Gamma_{u+1}}$, which is finite and discrete. Then the probability we are interested in computing in this proposition is 
    \begin{equation}
        \Prob\left((r_i)\in \bigcap_{J'\subset J}R(J')\right).
    \end{equation}
    
    In the case when $J$ is countable, we can enumerate elements of $J$, and only take $J'=J_n$ in the intersection above, where $J_n$ is the set consisting of the first $n$ elements of $J$. Since $J_n$ is monotone in $n$, the probability above is equal to $\lim_{n\rightarrow \infty}\Prob((r_i)\in R(J_n))$, which is the desired equality. 

    When $J$ is uncountable, we can find a positive integer $N$ where the set of $j\in J$ such that $\Prob((r_i)\in R(\{j\}))<1-\frac{1}{N}$ is infinite, because all such probabilities are less than $1$. Set $J_n\subset J$ to be a monotonically increasing family of sets so that $J_n$ consists of $n$ elements satisfying this inequality. Then 
    \[\Prob\left((r_i)\in \bigcap_{J'\subset J}R(J')\right)\leq\lim_{n\rightarrow \infty}\Prob\left((r_i)\in R(J_n)\right)\leq \lim_{n\rightarrow \infty}\left(1-\frac{1}{N}\right)^n.\]
    The last limit is $0$, so this proves the desired equality.

    For the second statement, the existence of such a choice of a tuple $(r_i)$ would force each of the factors in the product to be nonzero by looking at their image in the finite group $R_{J'}$ for all $J'$. For the reverse direction, every finite intersection of $R(J')$ for different $J'$ is nonempty by the terms in the product being nonzero. By compactness, $\cap_{J'} R(J')\neq \text{\O}$, which finishes the proof. 
 \end{proof}
 
 If $G_i$ was a profinite $F\rtimes\Gamma$-group, then for any open normal subgroup $U\subset G_i$, the intersection $\bigcap_{f\in F\rtimes\Gamma}f(U)$ is also an open normal subgroup by the continuity of the $F\rtimes\Gamma$-action and profiniteness of the groups $F\rtimes\Gamma, G_i$. Thus, irreducible $F\rtimes\Gamma$-groups are automatically finite. 
 \begin{corollary}\label{cor: fixed point number}
    For each irreducible abelian $H\rtimes\Gamma$-group $G$ and a subgroup $\Gamma_0\subset\Gamma$, there are unique nonnegative integers $m(\Gamma_0), n(\Gamma_0)$ so that 
    \begin{equation}h_{H\rtimes\Gamma}(G)^{m(\Gamma_0)}=|G^{\Gamma_0}|, \quad h_{H\rtimes\Gamma}(G)^{n(\Gamma_0)}=|Y(G^{\Gamma_0})|.\end{equation}
 \end{corollary}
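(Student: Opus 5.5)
The plan is to identify $G$ as a vector space over its endomorphism field and show that both $G^{\Gamma_0}$ and $Y(G^{\Gamma_0})$ are subspaces over that field.

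First, $G$ is an irreducible abelian $H\rtimes\Gamma$-group. By the preceding remark it is finite, and irreducibility forces it to be elementary abelian, so $G\cong\mathbb{F}_\ell^d$ is a simple $\mathbb{F}_\ell[H\rtimes\Gamma]$-module. By Schur's lemma $E:=\Hom_{H\rtimes\Gamma}(G,G)$ is a finite division ring, hence a finite field by Wedderburn's theorem. Thus $h:=h_{H\rtimes\Gamma}(G)=|E|=\ell^f$ for some $f\ge1$, and $G$ is naturally an $E$-vector space.

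Second, I will show that $G^{\Gamma_0}$ and $Y(G^{\Gamma_0})$ are $E$-subspaces of $G$ (respectively $G^{|\Gamma|}$).
\begin{itemize}
\item Every $\phi\in E$ commutes with the action of $\Gamma$, since $\Gamma\subset H\rtimes\Gamma$. Hence for $g\in G^{\Gamma_0}$ and $\gamma\in\Gamma_0$ we get $\gamma(\phi g)=\phi(\gamma g)=\phi g$, so $G^{\Gamma_0}$ is $E$-stable.
\item Since $G$ is abelian, $Y$ is the group homomorphism $g\mapsto(\gamma g-g)_{\gamma\in\Gamma}$. With $E$ acting diagonally on $G^{|\Gamma|}$, we have $Y(\phi g)=\phi Y(g)$, so $Y$ is $E$-linear.
\item Consequently $Y(G^{\Gamma_0})$, the image of an $E$-subspace under an $E$-linear map, is itself an $E$-subspace.
\end{itemize}
A finite $E$-vector space has cardinality a power of $|E|=h$. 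So $|G^{\Gamma_0}|=h^{m(\Gamma_0)}$ with $m(\Gamma_0)=\dim_E G^{\Gamma_0}$, and $|Y(G^{\Gamma_0})|=h^{n(\Gamma_0)}$ with $n(\Gamma_0)=\dim_E Y(G^{\Gamma_0})$.

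Uniqueness is immediate because $h\ge2$: the identity is a nonzero endomorphism, so $h>1$ and $k\mapsto h^k$ is injective on nonnegative integers. The step needing care is the $E$-linearity of $Y$ together with the correct meaning of $|Y(G^{\Gamma_0})|$ as the cardinality of the image set. For an alternative description one can use $|Y(G^{\Gamma_0})|=|G^{\Gamma_0}|/|G^\Gamma|$ from the notation section, since $G^\Gamma$ is also an $E$-subspace; this gives $n(\Gamma_0)=m(\Gamma_0)-m(\Gamma)$. Note that this relies on the standing $|\Gamma|'$ coprimality hypothesis, which holds in the setting of \Cref{prop: relation probability}.
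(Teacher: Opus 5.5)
Your proof is correct, but it takes a different route from the paper's.

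\textbf{The paper's route.} The paper argues indirectly through \Cref{prop: relation probability}. Take $n=0$, $u=1$, $\Gamma_1=\Gamma$, $\Gamma_2=\Gamma_0$ for the first claim, and $n=0$, $u=0$, $\Gamma_1=\Gamma_0$ for the second. Then the factors in the product formula become $1-h^j/N$, with $N=|G^{\Gamma_0}|$ or $N=|Y(G^{\Gamma_0})|$ respectively. That product is the probability of normally generating $G^m$, so it is nonnegative for every $m$. Since $h>1$, the first $j$ with $h^j\ge N$ must give $h^j=N$; otherwise that factor would be negative while all earlier ones are positive.

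\textbf{Your route.} You use Schur's lemma to make $G$ a vector space over the finite division ring, indeed field, $E=\Hom_{H\rtimes\Gamma}(G,G)$. You then check directly that $G^{\Gamma_0}$ and $Y(G^{\Gamma_0})$ are $E$-subspaces, because $E$ commutes with $\Gamma$ and $Y$ is $E$-linear.

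\textbf{Comparison.} Your argument is self-contained and more transparent. It identifies the exponents explicitly as $m(\Gamma_0)=\dim_E G^{\Gamma_0}$ and $n(\Gamma_0)=m(\Gamma_0)-m(\Gamma)$, which is exactly how they are used later in \Cref{cor: probability lower bound}. It does not depend on the exact count of non-generating tuples inside the proof of \Cref{prop: relation probability}. It also needs no coprimality assumption on $G$. The paper's route is shorter only because that machinery is already in place.

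\textbf{A small correction to your last remark.} The identity $|Y(G^{\Gamma_0})|=|G^{\Gamma_0}|/|G^\Gamma|$ does not rely on any $|\Gamma|'$ hypothesis. For abelian $G$, the map $Y$ is a homomorphism with kernel exactly $G^\Gamma\subseteq G^{\Gamma_0}$.
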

 \begin{proof}
    For the first part, take $\Gamma_1=\Gamma, \Gamma_2=\Gamma_0, n=0, u=1$ in the previous proposition. Since $h_{H\rtimes\Gamma}(G)>1$, via \Cref{prop: relation probability}, the minimal $m$ for which $h_{H\rtimes\Gamma}(G)^m\geq |G^{\Gamma_2}|$ is when we get equality. For the second part, take $\Gamma_1=\Gamma_0, n=0, u=0$ instead. 
 \end{proof}

 In \Cref{prop: relation probability}, we prove more than what is necessary for the main theorem by considering the possibility that the $G_i$ could be nonabelian irreducible $F\rtimes\Gamma$-groups. We never see this in our application to the proof of the main theorem because we are only looking at pro-odd groups. Indeed, the commutator $[G_i,G_i]$ would also be an $F\rtimes\Gamma$-group, and if $G_i$ were odd, it would be solvable, so $[G_i,G_i]\neq G_i$. This forces $[G_i,G_i]=1$ and $G_i$ to be abelian via irreducibility. The pro-odd restriction comes from the condition that we avoid the order of roots of unity (to the characteristic if char$(Q)=2$) since $\pm 1\in Q$. Nevertheless, the random group model we construct in the second half of this paper consists of random groups that do not necessarily have to be pro-odd if $2\nmid |\Gamma|$.

 We now turn to the main theorem. For convenience, we recall here that $\Delta=|\Cl_T(Q)||\mu(K)||\Gamma|$ if $Q$ is a number field and $\Delta=\text{char}(Q)|\Cl_T(Q)||\mu(K)||\Gamma|$ if $Q$ is a function field.
 \begin{theorem}\label{thm: main}
    Let $Q$ be a global field and $K/Q$ be a $\Gamma$-extension. Let $T$ be a nonempty finite set of places of $Q$ containing all the archimedean places if $Q$ is a number field, and $\Gamma_{v_i}$ be a decomposition group of $v_i\in T$ of the extension $K/Q$, where we label the places in $T$ by $v_1,\dots, v_{|T|}$. Let $\mathcal{C}$ be a set of $\Delta'$-$\Gamma$-groups, with the additional condition that $G_{\text{\normalfont\O}}^T(K)^\mathcal{C}$ is finitely admissibly generated. Then for $n$ large enough, we have the following isomorphism of $\Gamma$-groups:
    \begin{equation}G_{\text{\normalfont\O}}^T(K)^{\mathcal{C}}\cong \mathcal{F}_n^{\mathcal{C}}/[r_i^{-1}\gamma(r_i),r_j]_{1\leq i\leq n+1, n+2\leq j\leq n+|T|, \gamma\in \Gamma}\end{equation}
    for some choice $r_i\in \mathcal{F}_n^{\mathcal{C}}$ for $1\leq i\leq n$, $r_i\in(\mathcal{F}_n^{\mathcal{C}})^{\Gamma_{v_{i-n}}}$ for $n+1\leq i \leq n+|T|$.
 \end{theorem}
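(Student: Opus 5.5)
Fix $n$ large and a surjection $\pi:\mathcal{F}_n^{\mathcal{C}}\twoheadrightarrow G:=G_{\text{\O}}^T(K)^{\mathcal{C}}$ of $\Gamma$-groups. Such a surjection exists because $G$ is finitely admissibly generated, and by \Cref{cor: admissibility} it is admissible: $\mathcal{C}$ consists of odd groups, hence solvable ones, of order prime to $|\Cl_T(Q)||\Gamma|$. Let $R=\ker\pi$. The strategy is to show that a Haar-random tuple $(r_1,\dots,r_{n+|T|})\in R^n\times R^{\Gamma_{v_1}}\times\cdots\times R^{\Gamma_{v_{|T|}}}$ normally generates $R$, in the sense $[Y(\{r_1,\dots,r_{n+1}\}),r_{n+2},\dots,r_{n+|T|}]=R$, with positive probability. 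This is \Cref{prop: relation probability} with $u=|T|-1$ and $\Gamma_k=\Gamma_{v_k}$. A generating tuple then gives the claimed presentation with $r_i\in R\subset\mathcal{F}_n^{\mathcal{C}}$.

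To apply \Cref{prop: relation probability}, I first need $R$ to decompose as $\prod_j G_j^{m_j}$ with $G_j$ irreducible $\mathcal{F}_n^{\mathcal{C}}\rtimes\Gamma$-groups and each $m_j$ finite. Following Section 4 of \cite{lwzb} and the pro-$\mathcal{C}$ versions in \cite{Liu2} (Proposition 3.4 and Corollary 4.5), I would pass to the quotient of $R$ by the intersection of its maximal $\mathcal{F}_n^{\mathcal{C}}\rtimes\Gamma$-invariant closed normal subgroups. Normal generation can be checked on this quotient, which has the required product form, and the multiplicities are the $m(\mathcal{C},n,G,A)$. Because all groups are pro-odd, every $G_j$ is abelian, by the remark after \Cref{cor: fixed point number}. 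Each $G_j=A$ is then an odd irreducible $|\Gamma|'$-$G\rtimes\Gamma$-module, with $R$ acting trivially since it is the kernel. By the positivity criterion in \Cref{prop: relation probability}, it then suffices to show, for every such $A$ occurring with multiplicity $m=m(\mathcal{C},n,G,A)$ and every $0\le \ell\le m-1$, that
\[ h(A)^{\ell}\,|A^\Gamma| < |Y(A)|^n\prod_{v\in T}|A^{\Gamma_v}|. \]

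Write $h(A)=\ell_0^{e}$ with $e=\dim_{\mathbb{F}_{\ell_0}}\Hom_{G\rtimes\Gamma}(A,A)$, where $\ell_0$ is the exponent of $A$. Since $|Y(A)|=|A|/|A^\Gamma|$, the needed inequality in dimensions is
\[ (m-1)e < n\dim A-(n+1)\dim A^\Gamma+\sum_{v\in T}\dim A^{\Gamma_v}. \]
\Cref{cor: multiplicity bound} gives $me\le n\dim A-(n+1)\dim A^\Gamma+\sum_{v\in T}\dim A^{\Gamma_v}+1_{\mu_{\ell_0}}(A)$. In the generic case $A\not\cong\mu_{\ell_0}$ this yields $(m-1)e\le \text{RHS}-e<\text{RHS}$, as required. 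The hypothesis that $\mathcal{C}$ is prime to $\Delta$ (and to $\mathrm{char}(Q)$ in the function field case) has two roles. It makes \Cref{cor: main ineq function field} and \Cref{cor: multiplicity bound} applicable. It also excludes $A\cong\mu_{\ell_0}$ as a $\Gamma$-module occurring in $R$: such an $A$ would force $\mu_{\ell_0}\subset K$ whenever $A^{\Gamma}$-type conditions trivialize it, and $\ell_0\mid|\mu(K)|$ is forbidden.

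The main obstacle I expect is exactly this $1_{\mu_{\ell}}(A)$ term. If $A\cong\mu_{\ell_0}$ as a $G(K_{\text{\O}}^T/Q)$-module, then $\mu_{\ell_0}\subset K_{\text{\O}}^T$ and the Galois action factors through $G\rtimes\Gamma$. I would argue that, since $G$ is admissible of order prime to $\ell_0-1$ (the action on $\mu_{\ell_0}$ factors through an abelian quotient of $G\rtimes\Gamma$, on which $G$ acts trivially because $G_\Gamma=1$), the action factors through $\Gamma$. Hence $\mu_{\ell_0}\subset K$, contradicting $\ell_0\nmid|\mu(K)|$. With that term gone the bound is strict, and the remaining cases are routine. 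Finally, "for $n$ large enough" is needed only so that the surjection $\pi$ exists; the bound from \Cref{cor: multiplicity bound} holds uniformly in $n$.
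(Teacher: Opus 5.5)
Your proposal is correct and follows the paper's proof essentially step for step. You take a surjection from $\mathcal{F}_n^{\mathcal{C}}$, pass to the fundamental exact sequence, apply the positivity criterion of \Cref{prop: relation probability}, invoke the bound of \Cref{cor: multiplicity bound}, and remove the $1_{\mu_\ell}(A)$ term using admissibility; your argument that a character of $G\rtimes\Gamma$ restricted to $G$ factors through $G_\Gamma=1$ is the group-theoretic form of the paper's observation that $\Gamma$ acts trivially on $G(K(\mu_\ell)/K)$. One small slip: the claim that $|G|$ is prime to $\ell_0-1$ is neither justified nor needed, since your parenthetical argument already forces the action to factor through $\Gamma$.
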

 \begin{remark}\label{rmk: main thm}
    Reordering the primes gives potentially different presentations, in the sense that the $n+1$st admissible relator may come from a different group $({\mathcal{F}_n^{\mathcal{C}}})^{\Gamma_{v_1}}$. Also, the minimal $n$ that works in the theorem above is one where $G_{\text{\O}}^T(K)^\mathcal{C}$ is finitely admissibly generated by $n$ elements. If we drop the hypothesis on roots of unity, the multiplicity bound we obtain from the group presentation would be too small in comparison with that given by Galois cohomology. In particular, the multiplicity bound coming from Galois cohomology has an extra $1_{\mu_\ell}(A)$ term in the numerator for the multiplicity bound in \Cref{cor: multiplicity bound}, which contributes nontrivially when $A\cong_{G(Q^{\text{sep}}/Q)}\mu_\ell$. Note that avoiding roots of unity (or the characteristic if char$(Q)=2$) forces $G_{\text{\O}}^T(K)^{\mathcal{C}}$ to be solvable, hence admissible by \Cref{cor: admissibility}. 
 \end{remark}
 \begin{remark}
    We also remark that this theorem depends on the decomposition group above the places of $T$, but does not depend on the inertia group above these places. 
 \end{remark}

 \begin{proof}[Proof of \Cref{thm: main}]
    Take $n$ large enough so there is a surjection $\phi:\mathcal{F}_n^\mathcal{C}\rightarrow G_{\text{\O}}^T(K)^\mathcal{C}$. Recall from Section $4$ of \cite{lwzb} that associated to a surjection $\phi: \mathcal{F}_n^\mathcal{C}\rightarrow G_{\text{\O}}^T(K)^\mathcal{C}$, we have a fundamental exact sequence $1\rightarrow R\rightarrow F\rightarrow G_{\text{\O}}^T(K)^\mathcal{C}\rightarrow 1$. This sequence is obtained from the sequence $1\rightarrow \ker\phi\rightarrow \mathcal{F}_n^{\mathcal{C}}\rightarrow G_{\text{\O}}^T(K)^\mathcal{C}\rightarrow 1$ by quotienting out the first two terms by the intersection of maximal $\mathcal{F}_n^{\mathcal{C}}\rtimes\Gamma$-normal subgroups of $\ker\phi$. We get a presentation as in the theorem precisely when the multiplicity $m$ of every irreducible $G_{\text{\O}}^T(K)^\mathcal{C}\rtimes\Gamma$-module $A$ in $R$ is not ``too large." This is quantified by the inequality 
    \[\left(1-\frac{h_{F\rtimes\Gamma}(A,A)^m|A^\Gamma|}{|Y(A)|^n\prod_{i=1}^{|T|}|A^{\Gamma_{v_i}}|}\right)\geq 0,\]
    which is obtained from \Cref{prop: relation probability}. This simplifies to 
    \[n\dim_{\mathbb{F}_\ell}A/A^\Gamma+\sum_{i=1}^{|T|}\dim_{\mathbb{F}_\ell}A^{\Gamma_{v_i}}\geq m\Hom_{G(K^\#/K)^\mathcal{C}\rtimes\Gamma}(A,A)+\dim_{\mathbb{F}_\ell}A^\Gamma,\]
    which is equivalent to 
    \[m\leq \frac{n\dim_{\mathbb{F}_\ell}A-(n+1)\dim_{\mathbb{F}_\ell}A^\Gamma+\sum_{v\in T} \dim_{\mathbb{F}_\ell}A^{\Gamma_v}}{\dim_{\mathbb{F}_\ell}\Hom_{G(K^\#/K)^\mathcal{C}\rtimes\Gamma}(A,A)}.\]

    We claim that $A\not\cong_{G(Q^{\text{sep}}/Q)}\mu_\ell$. Indeed, we made sure that groups in $\mathcal{C}$ have order coprime to $|\mu(K)|$, so the only way for $A\cong_{G(Q^{\text{sep}}/Q)}\mu_\ell$ is for $K(\mu_\ell)/K$ to be a nontrivial subextension of $(K_{\text{\O}}^T)^{\mathcal{C}}/K$, the extension associated to the Galois group $G_{\text{\O}}^T(K)^{\mathcal{C}}$. By the admissibility of $G_{\text{\O}}^T(K)^{\mathcal{C}}$, $G(K(\mu_\ell)/K)$ is an admissible $\Gamma$-group, with the $\Gamma$-action induced by conjugation. But $K(\mu_\ell)$ is the composite of Galois extensions $Q(\mu_\ell)/Q$, $K/Q$, the former being abelian, so the $\Gamma$-action on $G(K(\mu_\ell)/K)$ is trivial. This contradicts admissibility, which gives that $G(K(\mu_\ell)/K)_{\Gamma}=1$. 

    Since $A$ is a subquotient of $\mathcal{F}_n^{\mathcal{C}}$, the module $A$ is odd and coprime to $|\Gamma|$. Thus, the theorem follows by \Cref{cor: multiplicity bound}.
 \end{proof}

 From the perspective of presentations of profinite groups, the presentation above may not look pleasant. One way to mitigate this is to replace $\mathcal{F}_n$ with $F_n$. In particular, by Proposition $3.4$ and Corollary $5.3$ of \cite{Liu2} applied to the surjection $F_n\twoheadrightarrow G$ obtained in \Cref{cor: multiplicity bound}, the multiplicity of an irreducible odd $|\Gamma|'$-$G_{\text{\O}}^T(K)^\mathcal{C}\rtimes\Gamma$ module $A$ in the presentation $F_n^{\mathcal{C}}\twoheadrightarrow G_{\text{\O}}^T(K)^\mathcal{C}$ is bounded from above by 
 \[\frac{n\dim_{\mathbb{F}_\ell}A-\dim_{\mathbb{F}_\ell}A^\Gamma+\sum_{v\in T}A^{\Gamma_v}}{\dim_{\mathbb{F}_\ell}\Hom_{G(K^\#/K)^\mathcal{C}\rtimes\Gamma}(A,A)}.\]
 This suggests that we consider the presentation $F_n/[Y(r_1), r_2,\dots, r_{n+|T|}]$, where $r_i\in F_n^{\Gamma_{v_{n-i}}}$ for $n+1\leq i\leq n+|T|$. Indeed it is possible to show that $G_{\text{\O}}^T(K)^\mathcal{C}$ has a presentation of the form $F_n^\mathcal{C}/[Y(r_1), r_2,\dots, r_{n+|T|}]$. However, the distribution of random groups (discussed in the next section) of the form $F_n/[Y(r_1), r_2,\dots, r_{n+|T|}]$ will not be the correct distribution that predicts the distribution of the pro-prime-to-$2|\Cl_T(Q)||\Gamma|$ completion of $G_{\text{\O}}^T(K)$ because these random groups do not have to be admissible. In particular, some $H$-moments for nonadmissible groups $H$ would become nonzero, which we need to avoid. 
 \begin{remark}\label{rmk: class group presentation}
    In the set up of \Cref{thm: main}, take $Q$ to be a number field and $\mathcal{C}$ to be the set of all finite abelian $\Delta'$-$\Gamma$-groups. Then 
    \begin{equation}
        G_{\text{\O}}^T(K)^{\mathcal{C}}\cong \Cl_T(K)_{\Delta'},
    \end{equation}
    where we recall that the notation $G_{\Delta'}$ denotes the pro-prime-to-$\Delta$ completion of $G$ for any profinite group $G$. Therefore, \Cref{thm: main} tells us that 
    \begin{equation}\label{eq: T-class group presentation}
        \Cl_T(K)_{\Delta'}\cong \mathcal{F}_n(\Gamma)^{\text{ab}}_{\Delta'}/[r_i^{-1}\gamma(r_i),r_j].
    \end{equation}
    Set $Z:=\prod_{p\nmid \Delta}\mathbb{Z}_p$ and $B:=Z[\Gamma]/(\sum_{\gamma\in\Gamma}\gamma)$. We note that $B$ depends on $T,K,$ and $Q$. Corollary $3.10$ of \cite{lwzb} states that the right hand side of \eqref{eq: T-class group presentation} is $B^n/[\gamma(r_i)-r_i, r_j]$ written additively. Using that $|\Gamma|$ is invertible in $B$ and that the norm element acts by multiplication by $0$, we obtain 
    \begin{equation}\label{eq: T-class group presentation simplified}
        \Cl_T(K)_{\Delta'}\cong B^n/[r_1,r_2,\dots, r_{n+|T|}]
    \end{equation}
    where the relations $r_i\in B^n$ for $1\leq i\leq n$, and $r_j\in (B^n)^{\Gamma_{v_{j-n}}}$ for $n+1\leq j\leq n+|T|$. Recalling that $\Delta_\infty:=|\Cl(Q)||\mu(K)||\Gamma|$ and taking $T$ to be the set of all archimedean places of $Q$ gives 
    \begin{equation}
        \Cl(K)_{\Delta_\infty'}\cong B^n/[r_1,r_2,\dots, r_{n+r+s}].
    \end{equation}
    
    Writing the class group in this form is important because in the case when $Q=\mathbb{Q}$ and $\Gamma$ has order $2$, we recover the classical random matrix interpretation of the Cohen-Lenstra heuristics as seen in Cohen and Lenstra's work \cite{cl}. This is also explained in Ellenberg and Venkatesh's article in Section $4.1$ of \cite{EV} and Wood's argument before Theorem $3.6$ in \cite{Woo3}, as long as we make sure to avoid primes that can come up as roots of unity of $K$. 
 \end{remark}
 The next proposition shows that \eqref{eq: T-class group presentation} and \eqref{eq: T-class group presentation simplified} in \Cref{rmk: class group presentation} can be obtained from more classical results without using \Cref{thm: main}. We repeat the isomorphisms here for convenience. 
 \begin{proposition}\label{prop: class group presentation alt}
    Suppose $K/Q$ is a $\Gamma$-extension of number fields. Then
    \begin{equation}
        \Cl_T(K)_{\Delta'}\cong B^n/[r_1,r_2,\dots, r_{n+|T|}]
    \end{equation}
    where $r_i\in (B^n)^{\Gamma_{v_{j-n}}}$ for $n+1\leq j\leq n+|T|$, and
    \begin{equation}
        \Cl_T(K)_{\Delta'}\cong \mathcal{F}_n(\Gamma)^{\text{ab}}_{\Delta'}/[r_i^{-1}\gamma(r_i),r_j],
    \end{equation}
    where $1\leq i \leq n+1$, $n+2\leq j\leq n+|T|$,  $r_{n+1}\in (\mathcal{F}_n(\Gamma)^{\text{ab}}_{\Delta'})^{\Gamma_{v_1}}$, and $r_j\in (\mathcal{F}_n(\Gamma)^{\text{ab}}_{\Delta'})^{\Gamma_{v_{j-n}}}$ for $n+2\leq j\leq n+|T|$. 
 \end{proposition}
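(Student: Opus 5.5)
We prove the first isomorphism using the structure of $\Cl_T(K)_{\Delta'}$ as a module over the semilocal ring $B=Z[\Gamma]/(N_\Gamma)$, where $N_\Gamma=\sum_{\gamma\in\Gamma}\gamma$. The second isomorphism then follows by translating through Corollary $3.10$ of \cite{lwzb}, as in \Cref{rmk: class group presentation}.

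\emph{Step 1: $\Cl_T(K)_{\Delta'}$ is a $B$-module.} Since $|\Gamma|$ is invertible in $Z$, it suffices to show that $N_\Gamma$ kills $\Cl_T(K)_{\Delta'}$. The composite $\Cl_T(K)\to\Cl_T(Q)\to\Cl_T(K)$ of the norm and extension of ideals is multiplication by $N_\Gamma$. This composite factors through $\Cl_T(Q)$, whose order divides $\Delta$, so it is zero on the prime-to-$\Delta$ part.

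\emph{Step 2: an unramified-outside-$T$ $S$-unit exact sequence.} Let $U=\mathcal{O}_{K,T}^\times\otimes Z$ be the $T$-units of $K$ tensored with $Z$. Write $I_T$ for the group of fractional ideals of the ring of $T$-integers $\mathcal{O}_{K,T}$, and $P_T$ for its principal ideals. We have exact sequences of $Z[\Gamma]$-modules
\[0\to U\to K^\times\otimes Z\to P_T\otimes Z\to0,\qquad 0\to P_T\otimes Z\to I_T\otimes Z\to \Cl_T(K)_{\Delta'}\to0.\]
We use instead a finite presentation. Choose primes $\mathfrak{p}_1,\dots,\mathfrak{p}_n$ of $Q$ outside $T$ that split completely in $K$ and whose classes, together with their Galois conjugates, generate $\Cl_T(K)_{\Delta'}$; Chebotarev applied to the Hilbert class field allows this for $n$ large. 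Then $I:=\bigoplus_i Z[\Gamma]\cdot\mathfrak{P}_i$ surjects onto $\Cl_T(K)_{\Delta'}$. Because the target is killed by $N_\Gamma$ and $|\Gamma|\in Z^\times$, we have $Z[\Gamma]=B\times Z$, and the $Z$-factor (the $N_\Gamma$-part) maps to zero. So $B^n$ surjects onto $\Cl_T(K)_{\Delta'}$.

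\emph{Step 3: counting relations.} We identify the kernel $R$ of $B^n\to\Cl_T(K)_{\Delta'}$. It consists of the $e$-projections, where $e=1-N_\Gamma/|\Gamma|$, of the principal divisors supported on the $\mathfrak{P}_i$. By Dirichlet's $T$-unit theorem (Herbrand), $U\otimes\mathbb{Q}\cong\bigl(\bigoplus_{v\in T}\Ind_{\Gamma_v}^\Gamma\mathbf{1}\bigr)/\mathbf{1}$. Together with the $\mathfrak{p}_i$-units this gives
\[R\oplus(\text{stuff})\cong e\Bigl(\bigoplus_{i\le n}Z[\Gamma]\oplus\bigoplus_{v\in T}Z[\Gamma/\Gamma_v]\Bigr)\]
after adjusting by a trivial summand. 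The roots-of-unity part vanishes because $|\mu(K)|$ divides $\Delta$. Since $|\Gamma|$ is invertible, each $Z[\Gamma/\Gamma_v]=Z[\Gamma]^{\Gamma_v}$ is projective. Hence the $S$-units (with $S=T\cup\{\mathfrak{P}_i\}$) tensored with $Z$, projected by $e$, form a projective module isomorphic to $B^n\oplus\bigoplus_{v\in T}eZ[\Gamma]^{\Gamma_v}$. The divisor map from the $S$-units onto $R$ is surjective, with kernel the $T$-part. Projectivity over the semilocal ring $B$, combined with Krull–Schmidt type cancellation (or simply taking an explicit generating set), shows that $R$ is generated by $n$ arbitrary elements together with, for each $v\in T$, one element of $(B^n)^{\Gamma_v}$. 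Concretely, the image of a $\Gamma_v$-fixed generator of $eZ[\Gamma/\Gamma_v]$ is $\Gamma_v$-invariant. This yields $r_1,\dots,r_{n+|T|}$ of the required shape.

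\emph{Step 4: the second form.} Apply Corollary $3.10$ of \cite{lwzb} to identify $\mathcal{F}_n(\Gamma)^{\text{ab}}_{\Delta'}$ with $B^n$. A $\Gamma_{v_1}$-fixed $r_{n+1}$ produces the elements $\gamma(r_{n+1})-r_{n+1}$. Because $|\Gamma|$ is invertible, the $Z[\Gamma]$-span of these elements equals $e$ applied to the span of $r_{n+1}$, which is the span of $r_{n+1}$ since we are already in $B$. The same holds for $r_1,\dots,r_n$.

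The main obstacle is Step 3: showing that the relation module is generated by elements of exactly this shape, rather than merely having the right rank. Local conditions at primes dividing $|\Gamma|$ can make $Z$-lattices non-free. We handle this by using that every $Z[\Gamma]$-lattice is projective when $|\Gamma|\in Z^\times$, and then choosing generators through Nakayama's lemma on each irreducible quotient.
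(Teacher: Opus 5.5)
Your proposal is correct and follows essentially the paper's route. Both arguments take totally split primes as generators, and both use that $N_\Gamma$ kills $\Cl_T(K)_{\Delta'}$ because of $\Cl_T(Q)$. Both then use that the $S$-units tensored with $Z$ form the permutation module $Z[S]$ up to a trivial summand: the paper cites Corollary 8.7.3 of \cite{NSW}, while you derive it from Herbrand plus the fact that for $p\nmid|\Gamma|$ a $\mathbb{Z}_p[\Gamma]$-lattice is determined by its $\mathbb{Q}_p$-span, which is the real content behind your ``Hence'' in Step 3. Orbit generators give the relators, stabilizers give the $\Gamma_v$-invariance, and Corollary 3.10 of \cite{lwzb} gives the second form. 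The only difference is cosmetic: you put the finite places of $T$ into $S$ from the start, whereas the paper takes $S=S_f\cup S_\infty$ and afterwards quotients by equivariantly lifted classes of the finite $T$-primes.
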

 \begin{proof}[Proof of \Cref{prop: class group presentation alt} without using \Cref{thm: main}]
 The arguments of this proof can be seen in the paragraphs before Theorem 3.6 of \cite{Woo3}. 

 The reciprocity map and the Cebotarev density theorem ensure that for every ideal class of $\Cl(K)$, there is a prime ideal in that ideal class that is totally split in $K/Q$. Thus, let $S_f$ be a finite $\Gamma$-invariant set of finite places of $K$ that is totally split in $K/Q$, and $S_f$ generates $\Cl(K)$ as a $\mathbb{Z}$-module. Let $S:=S_f\cup S_{\infty}$, where $S_\infty$ is the set of archimedean primes of $K$. Finally, let $I_S$ be the free $Z[\Gamma]$-module generated by one element from each $\Gamma$-orbit $S_f$, and $\mathcal{O}_{K,S}^\times$ be the group of $S$-units (i.e. elements $a$ so that for every prime $\mathfrak{p}\not\in S$, $v_\mathfrak{p}(a)=0$). 

 By Corollary 8.7.3 of \cite{NSW}, we have a surjective $Z[\Gamma]$-equivariant map 
 \[\bigoplus_{\mathfrak{p}\in S}Z\twoheadrightarrow \prod_{p\nmid \Delta}\mathcal{O}_{K,S}^\times\otimes \mathbb{Z}_p\]
 because $Z$ is relatively prime to the order of $\mu(K)$ and $\Gamma$. Here the $\Gamma$-action on $\bigoplus_{\mathfrak{p}\in S}Z$ is induced from its action on $S$. 

 The cokernel of the canonical $\Gamma$-equivariant map $ \prod_{p\nmid \Delta}\mathcal{O}_{K,S}^\times\otimes\mathbb{Z}_p\rightarrow I_S$ induced by $a\mapsto (a)$ is the prime-to-$\Delta$ completion of the ideal class group. To obtain $\Cl_T(K)$ instead, we can further quotient by the image of $\bigoplus_{\mathfrak{p}\in T(K)\backslash S_{\infty}}Z$ under the map sending a generator of the component corresponding to $\mathfrak{p}\in T(K)\backslash S_{\infty}$ to an element in $I_S$ whose image in $\Cl(K)$ is the class generated by $\mathfrak{p}$. In other words, we have 
 \[\Cl_T(K)_{\Delta'}\cong \coker(\bigoplus_{\mathfrak{p}\in S\sqcup (T(K)\backslash S_\infty)}Z\rightarrow I_S).\]
 Moreover, the norm acts trivially on the image of every $Z[\Gamma]$-component of $I_S$ in this quotient of $\Cl_T(K)$ because we assumed that $|\Cl_T(Q)|$ divides $\Delta$. Thus, 
 \[\Cl_T(K)_{\Delta'}\cong (Z[\Gamma]/(\sum_{\gamma\in\Gamma}\gamma))^{\oplus|S_f(Q)|}/ (r_1,r_2,\dots, r_{|S_f(Q)|+|T|})\]
 where we take $S_f(Q)$ to be the set of places of $Q$ below $S_f$, and the elements $r_i$ to be images of a generator of a component $Z$ in $\bigoplus_{\mathfrak{p}\in S\sqcup (T(K)\backslash S_\infty)}Z$. We pick one component corresponding to each $\Gamma$-orbit of $S\sqcup (T(K)\backslash S_\infty)=S_f\sqcup T(K)$. 

 For $r_i$ corresponding to a place $\mathfrak{q}$ in $T$, take a place $\mathfrak{p}$ in $T(K)$ above it, with decomposition group $\Gamma_{\mathfrak{p}|\mathfrak{q}}$. This group acts trivially on the component of $\bigoplus_{\mathfrak{p}\in S_f\sqcup T(K)}Z$ corresponding to $\mathfrak{p}$, so $\Gamma_{\mathfrak{p}|\mathfrak{q}}$ acts trivially on $r_i$ as well. Picking places $\mathfrak{p}$ corresponding to the decomposition groups $\Gamma_{v_j}$ where $v_j=\mathfrak{q}$ yields \eqref{eq: T-class group presentation simplified} after using $n=|S_f(Q)|$.

 As in the argument in \Cref{rmk: class group presentation}, we can replace every instance of $r_i$ with $\gamma(r_i)-r_i$ for all $\gamma\in\Gamma$. This finally gives us the presentation 
 \[\Cl_T(K)_{\Delta'}\cong B^{n}/(\gamma(r_i)-r_i, r_j)\]
 where $1\leq i \leq n+1$, $n+2\leq j\leq n+|T|$, giving us \Cref{eq: T-class group presentation} after using Corollary $3.10$ of \cite{lwzb} one more time.  
 \end{proof}
 \begin{corollary}
    Suppose $K/Q$ is a $\Gamma$-extension of number fields.  Then 
    \begin{equation}
        \Cl(K)_{\Delta_\infty'}\cong B^n/[r_1,\dots, r_{n+r+s}]
    \end{equation}
    where $r_j\in (B^n)^{\Gamma_{v_{j-n}}}$ for $n+1\leq j\leq n+r+s$, and 
    \begin{equation}
        \Cl(K)_{\Delta_\infty'}\cong \mathcal{F}_n(\Gamma)^{\text{ab}}_{\Delta_\infty'}/[r_i^{-1}\gamma(r_i),r_j].
    \end{equation}
    where $1\leq i \leq n+1$, $n+2\leq j\leq n+r+s$,  $r_{n+1}\in (\mathcal{F}_n(\Gamma)^{\text{ab}}_{\Delta_\infty'})^{\Gamma_{v_1}}$, and $r_j\in (\mathcal{F}_n(\Gamma)^{\text{ab}}_{\Delta_\infty'})^{\Gamma_{v_{j-n}}}$ for $n+2\leq j\leq n+r+s$. 
 \end{corollary}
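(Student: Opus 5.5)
This corollary is the special case $T=S_\infty$ of \Cref{prop: class group presentation alt}, and I would prove it by that specialization. The steps are as follows.

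First, let $Q$ be a number field and set $T=S_\infty(Q)$, the set of archimedean places. This $T$ is nonempty and finite, contains all archimedean places, and satisfies $|T|=r+s$.

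Next, identify the objects. With this choice, $T(K)\backslash S_\infty=\text{\O}$. So $\Cl_T(Q)=\Cl(Q)$, since no finite prime classes are quotiented out, and likewise $\Cl_T(K)=\Cl(K)$. It follows that $\Delta=|\Cl_T(Q)||\mu(K)||\Gamma|=\Delta_\infty$. Consequently $Z=\prod_{p\nmid\Delta_\infty}\mathbb{Z}_p$ and $B=Z[\Gamma]/(\sum_{\gamma\in\Gamma}\gamma)$ are exactly the rings used with $\Delta_\infty$.

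Then substitute into both isomorphisms of \Cref{prop: class group presentation alt}. The relator index ranges $n+1\leq j\leq n+|T|$ become $n+1\leq j\leq n+r+s$. The decomposition groups $\Gamma_{v_{j-n}}$ are those of the archimedean places, labelled $v_1,\dots,v_{r+s}$. This gives the first presentation $B^n/[r_1,\dots,r_{n+r+s}]$ with $r_j\in(B^n)^{\Gamma_{v_{j-n}}}$, and the second presentation in terms of $\mathcal{F}_n(\Gamma)^{\text{ab}}_{\Delta_\infty'}$ with the stated fixedness conditions on $r_{n+1}$ and on the $r_j$.

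The one point to check is that the proof of \Cref{prop: class group presentation alt} still goes through when $T(K)$ has no finite places. In that case the extra quotient by $\bigoplus_{\mathfrak{p}\in T(K)\backslash S_\infty}Z$ is empty, so the cokernel is just the prime-to-$\Delta_\infty$ class group.

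The $|T|$ relators coming from $T$ then arise from the archimedean summands of $\bigoplus_{\mathfrak{p}\in S}Z$. Choosing a place $\mathfrak{p}\mid v_j$ with decomposition group $\Gamma_{v_j}$ makes the corresponding relator fixed by $\Gamma_{v_j}$. Here $\Gamma_{v_j}$ is trivial at complex places and has order at most $2$ at real places.

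The norm-killing step relies on $|\Cl(Q)|\mid\Delta_\infty$, which holds by definition. This bookkeeping check is the only potential obstacle, and it is routine.
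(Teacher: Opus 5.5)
Your proposal is correct and follows the paper's route. The paper gives no separate proof of this corollary: it is the specialization $T=S_\infty$ of \Cref{prop: class group presentation alt}, as already announced in \Cref{rmk: class group presentation}, where $\Cl_T(Q)=\Cl(Q)$, $\Cl_T(K)=\Cl(K)$, $|T|=r+s$, and hence $\Delta=\Delta_\infty$. Your extra check, that the archimedean summands of $\bigoplus_{\mathfrak{p}\in S}Z$ supply the $r+s$ fixed relators when $T(K)\backslash S_\infty$ is empty, agrees with the paper's proof, where the relators are indexed by the $\Gamma$-orbits of $S\sqcup(T(K)\backslash S_\infty)=S_f\sqcup T(K)$.
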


\section{\texorpdfstring{Conjectures on $H$-moments of $G_{\text{\O}}^T(K)^{\mathcal{C}}$}{Conjectures on H-moments of GOT(K)C}}\label{sec: conjectures}
 In this section, we state the conjectured distribution of a certain natural quotient of $G_{\text{\O}}^T(K)$ as $K/Q$ varies over $\Gamma$-extensions of $Q$ with prescribed local conditions. We will also discuss the subtlety coming from intermediate cyclotomic subfields of $K/Q$, and why it is reasonable to ignore these intermediate extensions when $\Gamma$ is abelian. 

 Let $T$ be as before. For $D$ a positive integer, let $E_{\Gamma,(\Gamma_v)_{v\in T}}(D,Q)$ be the set of isomorphism classes of $\Gamma$-extensions of $Q$ with decomposition group of primes $v\in T$ conjugate to $\Gamma_v$, and the absolute norm of the product of ramified primes equal to $D$ (denoted $\rDisc K/Q$). Set 
 \begin{equation}E_{\Gamma, (\Gamma_v)_{v\in T}}(Q):=\bigcup_{D\geq 0}E_{\Gamma, (\Gamma_v)_{v\in T}}(D,Q).\end{equation} 
 In other words, $E_{\Gamma, (\Gamma_v)_{v\in T}}(Q)$ consists of $\Gamma$-extensions with decomposition group at places $v\in T$ conjugate to $\Gamma_v$. This set does not depend on how we order the extensions. 

 We also set 
 \begin{equation}
    \Delta_Q:=\begin{cases}|\Cl_T(Q)||\mu(Q)||\Gamma| &\text{if }Q\text{ is a number field}\\
    \text{char} (Q)|\Cl_T(Q)||\mu(Q)||\Gamma|&\text{if }Q\text{ is a function field}
    \end{cases}
 \end{equation} where we again make the dependence on $\Gamma$ and $T$ implicit. Note that $\Delta_Q\mid\Delta$. 
 \begin{definition}\label{def: E'}
    Let $E'_{\Gamma,(\Gamma_v)_{v\in T}}(D,Q)\subset E_{\Gamma,(\Gamma_v)_{v\in T}}(D,Q)$ be the subset of extensions satisfying the following additional properties:
 \begin{enumerate}
    \item $K/Q$ is linearly disjoint from $Q(\mu_{\exp|\Gamma|})$, where $\exp|\Gamma|$ is the least common multiple of the orders of elements of $\Gamma$. 
    \item For any prime $\ell$ coprime to $\Delta_Q$, we have that $\ell$ and $|\mu(K)|$ are coprime as well. 
 \end{enumerate}
 \end{definition}

 This corresponds to removing a thin set, as in Definition $2.5$ of \cite{LS}. Note that there are only finitely many primes $\ell$ that do not satisfy property $(2)$ above as we vary among extensions in $E_{\Gamma, (\Gamma_v)_{v\in T}}(D,Q)$, because any such prime satisfies the property that $\ell-1\mid |\Gamma|[Q:\mathbb{Q}]$ if $Q$ is a number field, and the constant field of $K$ is contained in $\mathbb{F}_{q^{|\Gamma|}}$ when $Q$ is a function field and $\mathbb{F}_q$ is the constant field of $Q$. 

 Then we propose the following conjecture:
 \begin{conjecture}\label{conj: main}
    Let $H$ be a finite $\Delta_Q'$-$\Gamma$-group. If $\#E'_{\Gamma,(\Gamma_v)_{v\in T}}(Q)\neq0$ and $H$ is admissible (otherwise the $H$-moment is $0$), then 
    \begin{equation}\lim_{X\rightarrow\infty}\frac{\sum_{D\leq X}\sum_{K\in E'_{\Gamma,(\Gamma_v)_{v\in T}}(D,Q)}|\Sur_\Gamma(G_{\text{\O}}^T(K)_{|\Gamma|'},H)|}{\sum_{D\leq X}\#E'_{\Gamma,(\Gamma_v)_{v\in T}}(D,Q)}=\frac{|H^\Gamma|}{\prod_{v\in T}|H^{\Gamma_v}|}.\end{equation}
 \end{conjecture}
 \begin{remark}
    If we take $Q$ to be a number field, $T$ to be all archimedean places of $Q$, and replace $E'_{\Gamma, (\Gamma_v)_{v\in T}}(D,Q)$ with $E_{\Gamma, (\Gamma_v)_{v\in T}}(D,Q)$ then we obtain a relative version of the moment conjectures given in Conjecture 1.3 of \cite{lwzb} in the real case and Conjecture 1.3 in \cite{LW2} in the imaginary case where $Q=\mathbb{Q}$, and indeed this specialization reduces to these two cases. These conjectures are based on a random group construction in \Cref{sec: random} mimicking that of \cite{lwzb} and the random group model in the imaginary case in \cite{LW2}. This also gives a special case of Conjecture $1.3$ of \cite{SW2}, which goes further by considering the effects of roots of unity $\mu(Q)$, as remarked in the Introduction.
 \end{remark}
 \begin{remark}\label{conj: Fq(t) case}
    If we take $Q=\mathbb{F}_q(t)$, $T=\{\infty\}$, and replace $E'_{\Gamma, (\Gamma_v)_{v\in T}}(D,Q)$ with $E_{\Gamma, (\Gamma_v)_{v\in T}}(D,Q)$, then this recovers the conjectured non-abelian Cohen-Lenstra heuristics for function fields as stated in Conjecture $7.1$ of \cite{LW2}. In fact, this generalizes the conjecture in \cite{LW2} by allowing arbitrary local conditions at $\infty$ as opposed to the imaginary case where we required the inertia group to equal the decomposition group. In this case, the conjectured moments are once again $\frac{1}{[H^{\Gamma_{\infty}}:H^\Gamma]}$.
 \end{remark}

 In an ideal world, we would conjecture a version of \Cref{conj: main} where all instances of $E'_{\Gamma, (\Gamma_v)_{v\in T}}(D,Q)$ are replaced by $E_{\Gamma, (\Gamma_v)_{v\in T}}(D,Q)$. However, such a conjecture would require the heuristic that the probability that a $\Gamma$-extension in $E_{\Gamma, (\Gamma_v)_{v\in T}}(D,Q)$ picks up extra roots of unity other than $\mu(Q)$ or is not linearly disjoint from $Q(\mu_{\exp|\Gamma|})$, is $0$. This is known for abelian $\Gamma$-extensions of number fields. The following proposition is well-known to the arithmetic statistics community, but I could not find a reference for it. I thank Peter Koymans for informing me that this result was not in the literature. 

 \begin{proposition}\label{prop: intermediate fields}
    Let $Q$ be a number field, $\Gamma$ be a finite abelian group, $T$ be a finite set of primes of $Q$, $\Gamma_v\subset \Gamma$ a subgroup for each $v\in T$, and $E/Q$ a nontrivial finite abelian extension. Then filtered by $\rDisc K/Q$ (or any fair counting function $C$, see Section 2.1 of \cite{Woo}), the probability that a $\Gamma$-extension $K/Q$ with decomposition group $\Gamma_v$ for each $v\in T$ contains $E$ is $0$. That is, when $\#E_{\Gamma, (\Gamma_v)_{v\in T}}(Q)\neq 0$,
    \begin{equation}
        \lim_{X\rightarrow \infty}\frac{\#\{K\in E_{\Gamma,(\Gamma_v)_{v\in T}}(Q)\text{ so that } C(K)\leq X,\: K\supset E\}}{\#\{K\in E_{\Gamma,(\Gamma_v)_{v\in T}}(Q)\text{ so that } C(K)\leq X\}}=0.
    \end{equation}
 \end{proposition}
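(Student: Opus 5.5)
The plan is to reduce everything to a local condition at a positive-density set of primes and then use the standard product (Euler product / Dirichlet series) description of abelian extensions. By class field theory, a $\Gamma$-extension $K/Q$ with $\Gamma$ abelian corresponds to a continuous surjection $\phi:\mathbb{A}_Q^\times/Q^\times\to\Gamma$ (up to automorphisms of $\Gamma$). The condition $K\supset E$ means that $\phi$ factors compatibly with the reciprocity character $\psi_E$ of $E$, i.e. $\Gamma\to G(E/Q)$ composed with $\phi$ equals $\psi_E$. Fix the finite set $S_E$ of places ramified in $E$. I will use Chebotarev to pick a set $\mathcal{P}$ of primes of $Q$, of positive density, which are unramified in $E$ but not split completely in $E$. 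For a prime $\mathfrak{p}\in\mathcal{P}$ that is ramified in $K$, the inertia group at $\mathfrak{p}$ lies in $\ker(\Gamma\to G(E/Q))$, because $E$ is unramified at $\mathfrak{p}$. Moreover, the local character $\phi_{\mathfrak{p}}$ restricted to $\mathcal{O}_{\mathfrak{p}}^\times$ must map into that kernel.

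The key step is to compare counts using the local-global structure of the generating function for abelian extensions with prescribed local behavior. I will follow Wood's framework (Section 2 of \cite{Woo}) for fair counting functions. There, the number of $\Gamma$-extensions with $C(K)\le X$ and prescribed local conditions at finitely many places is asymptotic to a constant times $X^{a}(\log X)^{b-1}$. The constant is a product of local masses, and the exponent $b$ is determined by the number of $G_Q$-orbits (under the cyclotomic action) of elements $\gamma\in\Gamma$ of minimal index. Wood's theorem gives these asymptotics for abelian $\Gamma$ and any finite set of local specifications, including the decomposition conditions at $T$.

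Next I impose the condition of containing $E$ as a condition at infinitely many primes. For each finite subset $\mathcal{P}_N\subset\mathcal{P}$ of size $N$ disjoint from $T$ and $S_E$, the event $K\supset E$ forces, at each $\mathfrak{p}\in\mathcal{P}_N$, a local restriction on $\phi_{\mathfrak{p}}$. Ramification, if present, must have inertia in $\ker(\Gamma\to G(E/Q))$. If $K$ is unramified at $\mathfrak{p}$, its Frobenius must map to the nontrivial Frobenius of $\mathfrak{p}$ in $E$. In either case this excludes a set of local specifications carrying a fixed positive proportion $\ge c>0$ of the local mass at $\mathfrak{p}$ (for instance, the unramified local characters with Frobenius mapping to $1\in G(E/Q)$). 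Here $c$ can be taken uniform because the local masses at large unramified primes are given by a uniform formula in $\Gamma$. By Wood's asymptotic with local conditions at the finitely many places of $\mathcal{P}_N\cup T$, the limsup of the ratio in the statement is therefore at most $(1-c)^N$. Letting $N\to\infty$ gives $0$.

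The main obstacle is justifying that local conditions at the primes in $\mathcal{P}_N$ behave independently, i.e. that the leading constant factors as a product of local masses, with the excluded specifications having mass bounded below. For abelian $\Gamma$, this is exactly what Wood's result provides for fair counting functions, and I would cite it directly. One subtlety must be checked. If every minimal-index element of $\Gamma$ maps trivially to $G(E/Q)$, the excluded unramified conditions still have positive proportion. If some map nontrivially, then tamely ramified specifications also matter, and the unramified exclusion alone still suffices provided unramified local behavior at $\mathfrak{p}$ carries positive local mass. It does, since the trivial-inertia term contributes $1$ in the local Euler factor, and that factor is $1+O(\mathrm{N}\mathfrak{p}^{-a})$.
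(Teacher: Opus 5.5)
Your proposal is correct and follows essentially the same route as the paper. Both arguments use Wood's product formula for local specifications under a fair counting function, pick (via Chebotarev) more and more primes $v$ with $E_v\neq Q_v$, observe that $K\supset E$ rules out at least the completely split local specification at each such $v$, and bound the resulting density by $(1-c)^{|S|}\to 0$.

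The paper makes explicit two details your sketch leaves implicit. First, it conditions on the finitely many viable specifications on $T\cup S_0$, where $S_0$ is the set of primes above $2$ relevant to Wood's viability (Grunwald--Wang) condition, and it takes the auxiliary primes away from $T\cup S_0$ and from the primes dividing $|\Gamma|$. Second, it gives the explicit uniform bound $P_C(\text{completely split})\ge 1/\#\Hom(Q_v^\times,\Gamma)\ge 1/|\Gamma|^2$, using that every local weight is at most $1$. Excluding only the completely split specification also avoids having to fix a map $\Gamma\to G(E/Q)$.
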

 \begin{proof}
    We will freely use terminology from Section $2.1$-$2.3$ of \cite{Woo}. Assume $E_{\Gamma, (\Gamma_v)_{v\in T}}(Q)\neq \text{\O}$. Recall that there is a subset $S_0$ of primes above $2$ in $Q$ so that a local specification $\Sigma$ on a finite set of primes $S$ of a $\Gamma$-extension $K/Q$ is viable if and only if $S_0\not\subset S$ or $\Sigma$ restricts to a local specification $\Sigma(i)$ on $S_0$ for some finite nonempty list of local specifications $\Sigma(1),\dots, \Sigma(\ell)$ on $S_0$.

    Set $T':=T\cup S_0$. Then there is a finite list of all local specifications $\Sigma'(1), \Sigma'(2),\dots, \Sigma'(n)$ on $T'$ that is viable and at the primes $v\in T$ the $\Gamma$-structured $Q_v$-algebra is isomorphic to $\Ind_\Gamma^{\Gamma_v}M$, where $M/Q_v$ is a $\Gamma_v$-extension of $v$-adic fields. This list is nonempty from our assumption $E_{\Gamma, (\Gamma_v)_{v\in T}}(Q)\neq \text{\O}$. Any $\Sigma'(j)$ restricts to a viable local specification $\Sigma(i)$ on $S_0$. 

    Next, set $S$ to be a finite set of primes disjoint from $T'$ and primes dividing $|\Gamma|$. Then any local specification on $T'\cup S$ is viable if and only if it restricts to a viable local specification $\Sigma'(j)$ on $T'$. Let $\Omega_v$ be the set of all local specifications on a place $v$, and $A_v\subset \Omega_v$ be the subset of local specifications $\Sigma$ so that $M\supset E_v$, where $M$ and $\Sigma$ are of the form in the previous paragraph for some subgroup $\Gamma_v$ that we do not fix. Here $E_v$ is the completion of $E$ at a place above $v$. For a local specification $\Sigma$ or a set of local specifications $B$ at a fixed set of places, denote by $\widetilde{\Sigma}$ to be the set of tuples of local specifications at all primes restricting to $\Sigma$, and $\widetilde{B}$ to be the set of tuples of local specifications at all primes restricting to a local specification in $B$.
        
    In the notation of \cite{Woo}, we have the following inequality from the fact that $K\supset E$ implies $M=K_v\supset E_v$:

    \begin{equation}
        \limsup_{X\rightarrow \infty}\frac{\#\{K\in E_{\Gamma,(\Gamma_v)_{v\in T}}(Q)\text{ so that } C(K)\leq X,\: K\supset E\}}{\#\{K\in E_{\Gamma,(\Gamma_v)_{v\in T}}(Q)\text{ so that } C(K)\leq X\}}\leq \frac{\sum_{j=1}^n\text{Pr}_C(\widetilde{\Sigma'(j)}\cap \bigcap_{v\in S}\widetilde{A_v})}{\sum_{j=1}^n\text{Pr}_C(\widetilde{\Sigma'(j)})}.
    \end{equation}
    By Theorem $2.1$ of \cite{Woo}, $\Sigma'(j)$ specializing to $\Sigma(i)$ for some $i$, and independence of the probability $P_C$ for pairwise distinct primes, we obtain 
        \[\text{Pr}_C(\widetilde{\Sigma'(j)}\cap \bigcap_{v\in S}\widetilde{A_v})=\frac{P_C(\widetilde{\Sigma'(j)}\cap \bigcap_{v\in S}\widetilde{A_v})}{P_C(\bigcup_{i=1}^\ell\widetilde{\Sigma(i)})}=\frac{P_C(\widetilde{\Sigma'(j)})\prod_{v\in S}P_C(\widetilde{A_v})}{P_C(\bigcup_{i=1}^\ell\widetilde{\Sigma(i)})}.\]
    Similarly, we obtain 
        \[\text{Pr}_C(\widetilde{\Sigma'(j)})=\frac{P_C(\widetilde{\Sigma'(j)})}{P_C(\bigcup_{i=1}^\ell\widetilde{\Sigma(i)})}>0.\]

    The Cebotarev density theorem guarantees that there are infinitely many primes $v$ so that $E_v\supsetneq Q_v$. Thus, if we further impose that $v\in S$ satisfies $E_v\supsetneq Q_v$, we have $\Ind_G^1 Q_v\not\in A_v$. Using the definition of the probability $P_C$, we have 
        \[P_C(\widetilde{A_v})\leq P_C(\widetilde{\Omega_v}-\widetilde{\Ind_G^1Q_v})=1-P_C(\widetilde{\Ind_G^1Q_v})=1-\frac{1}{\sum_{L\in \Omega_v}(Nv)^{-c(L)/m}}\]
    where $Nv$ is the absolute norm of the finite prime $v$ of $Q$, $c$ is the nonnegative function giving rise to the counting function $C$ in \cite{Woo}, and $m$ is some positive integer depending on the group $\Gamma$. Utilizing local class field theory and the correspondence between $\Gamma$-structured \'{e}tale $Q_v$-algebras and continuous homomorphisms $G_{Q_v}\rightarrow \Gamma$ established in Lemma $2.6$ of \cite{Woo},
        \[1-\frac{1}{\sum_{L\in \Omega_v}(Nv)^{-c(L)/m}}\leq 1-\frac{1}{\#\Hom(Q_v^{\times},\Gamma)}=1-\frac{1}{|\Gamma||\Gamma[|\mu'(Q_v)|]|}\leq 1-\frac{1}{|\Gamma|^2},\]
    where $\mu'(Q_v)$ is the set of roots of unity in $Q_v$ whose order is prime to the residue characteristic, and $\Gamma[|\mu'(Q_v)|]$ is the set of elements of $\Gamma$ whose order divides $|\mu'(Q_v)|$. This upper bound is independent of the place $v\in S$. Therefore, 

    \begin{equation}
        \text{Pr}_C(\widetilde{\Sigma'(j)}\cap \bigcap_{v\in S}\widetilde{A_v})\leq \frac{P_C(\widetilde{\Sigma'(j)})}{P_C(\bigcup_{i=1}^\ell\widetilde{\Sigma(i)})}\left(1-\frac{1}{|\Gamma|^2}\right)^{|S|}.
    \end{equation}
    Taking $|S|\rightarrow\infty$ gives $\Pr_C(\widetilde{\Sigma'(j)}\cap \bigcap_{v\in S}\widetilde{A_v})\rightarrow0$, and hence 
        \[\limsup_{X\rightarrow \infty}\frac{\#\{K\in E_{\Gamma,(\Gamma_v)_{v\in T}}(Q)\text{ so that } C(K)\leq X,\: K\supset E\}}{\#\{K\in E_{\Gamma,(\Gamma_v)_{v\in T}}(Q)\text{ so that } C(K)\leq X\}}=0,\]
    which finishes the proof. 
    \end{proof}
 \begin{remark}
    The proof above shows that for a fixed number field $Q$, a finite abelian group $\Gamma$, and a viable local specification $\Sigma$ at finitely many primes, the probability that an abelian $\Gamma$-extension $K/Q$ with local specification $\Sigma$ contains a fixed nontrivial extension $E/Q$ is $0$ as well, filtered by any fair counting function $C$. 
 \end{remark}
 \begin{corollary}
    Let $Q$ be a number field, $\Gamma$ be a finite abelian group, $T$ be a finite set of primes of $Q$, and $\Gamma_v\subset \Gamma$ a subgroup for each $v\in T$. Then filtered by $\rDisc K/Q$ (or any fair counting function $C$), the probability that a $\Gamma$-extension $K/Q$ with decomposition group $\Gamma_v$ for each $v\in T$ picks up extra roots of unity or is not linearly disjoint from $Q(\mu_{\exp|\Gamma|})$, is $0$. 
 \end{corollary}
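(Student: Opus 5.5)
The plan is to reduce the corollary to finitely many applications of \Cref{prop: intermediate fields}. The bad set is a union of two conditions, and I will show each is contained in a finite union of events of the form ``$K\supset E$'' for nontrivial finite abelian extensions $E/Q$.

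\textbf{Linear disjointness.} Suppose $K$ is not linearly disjoint from $Q(\mu_{\exp|\Gamma|})$. Since $Q(\mu_{\exp|\Gamma|})/Q$ is Galois, this means $K\cap Q(\mu_{\exp|\Gamma|})\neq Q$. Hence $K$ contains some nontrivial subfield $E$ of $Q(\mu_{\exp|\Gamma|})$, and such an $E$ is abelian over $Q$. There are only finitely many subfields of this fixed finite extension. So the bad set is contained in a finite union of events $\{K\supset E\}$, and each has density $0$ by \Cref{prop: intermediate fields}.

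\textbf{Extra roots of unity.} Suppose $\mu(K)\supsetneq\mu(Q)$. Then there is a prime power $p^k$ with $\mu_{p^k}\subset K$ but $\mu_{p^k}\not\subset Q$. Taking $k$ minimal with $\mu_{p^k}\not\subset Q$, the field $E=Q(\mu_{p^k})$ is a nontrivial abelian extension of $Q$ contained in $K$. Only finitely many such $E$ can occur. Indeed, $[Q(\mu_{p^k}):Q]\le|\Gamma|$ forces $\varphi(p^k)\le |\Gamma|[Q:\mathbb{Q}]$, which bounds both $p$ and $k$. This is the same finiteness observation the paper makes after \Cref{def: E'}. So again the bad set lies in a finite union of density-zero events.

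\textbf{Conclusion.} A finite union of sets of upper density $0$, taken relative to $E_{\Gamma,(\Gamma_v)_{v\in T}}(Q)$ ordered by $C$, has upper density $0$ by subadditivity of $\limsup$. This gives the corollary; if $E_{\Gamma,(\Gamma_v)_{v\in T}}(Q)$ is empty there is nothing to prove.

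The only point needing care is making the finiteness precise in the roots-of-unity case, in particular choosing the minimal $k$ so that each $E$ is genuinely nontrivial and lies in a fixed finite list independent of $K$. Everything else follows directly from \Cref{prop: intermediate fields}.
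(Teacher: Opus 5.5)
Your proposal is correct and matches the paper's argument. Both bad events lie inside the event that $K$ contains one of finitely many nontrivial abelian subextensions of $Q$: the nontrivial subfields of $Q(\mu_{\exp|\Gamma|})$, and the cyclotomic fields $Q(\mu_{n})$ of degree at most $|\Gamma|$ over $Q$. Each such event has density $0$ by \Cref{prop: intermediate fields}, finite subadditivity finishes, and your bound $\varphi(p^k)\le|\Gamma|[Q:\mathbb{Q}]$ just makes explicit the finiteness the paper states as ``only finitely many $Q(\mu_n)$ have Galois group a quotient of $\Gamma$.''
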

    \begin{proof}
    Among the abelian extensions $Q(\mu_n)/Q$, only finitely many of them have Galois group that is a quotient of the finite abelian group $\Gamma$. By \Cref{prop: intermediate fields}, the probability that a $\Gamma$-extension in $E_{\Gamma, (\Gamma_v)_{v\in T}}(Q)$ contains one of these finitely many possible nontrivial extensions $Q(\mu_n)/Q$ or a nontrivial subextension of $Q(\mu_{\exp|\Gamma|})$ is $0$ by finite subadditivity.  
    \end{proof}
 This corollary shows that for abelian $\Gamma$, the probability of containing a bad intermediate extension that we looked at in \Cref{def: E'} is $0$, so it is reasonable to replace every instance of $E'_{\Gamma, (\Gamma_v)_{v\in T}}(D,Q)$ with $E_{\Gamma, (\Gamma_v)_{v\in T}}(D,Q)$ in \Cref{conj: main}. 
 \begin{remark}\label{rmk: malle}
    The denominator in \Cref{conj: main} is closely related to Malle's conjecture \cite{Mal}, \cite{Mal2}. Like the Cohen-Lenstra-Martinet heuristics, Malle's conjecture has been a guiding force in arithmetic statistics research in the last two decades. 

    Malle's conjecture for abelian $\Gamma$---when ordering extensions by any fair counting function (even with viable local specifications)---is proven in \cite{Woo}, but it is known to have counterexamples for nonabelian $\Gamma$ related to the presence of cyclotomic subextensions of $Q(\mu_{\exp|G|})$. Kl\"uners was the first to provide such a counterexample when counting $C_3\wr  C_2$-extensions by discriminant in \cite{Klu}, and Koymans, Pagano, and Wang provide counterexamples in \cite{KP}, \cite{Wan} even when ordered by $\rDisc K/Q$. 

    The problem with Malle's conjecture in these counterexamples is the ``Big Fiber" problem: a positive proportion of $\Gamma$-extensions contain a nontrivial cyclotomic subextension of $Q(\mu_{\exp |\Gamma|})$. These extensions can contribute a higher power of $\log$ than the conjectured power in Malle's conjecture. Given such an extension $K/Q$ and $E=Q(\mu_{\exp|\Gamma|})\cap K\supsetneq Q$, thinking of $G_{\text{\O}}(K)_{|\Gamma|'}$ as a random $\Gamma$-group is unreasonable, since we have not taken into account the extra deterministic information that $K\supsetneq E$. Therefore, the naive fix is to get rid of these extensions from $E_{\Gamma,(\Gamma_v)_{v\in T}}(D,Q)$, and that is exactly what we do in \Cref{conj: main}. We note that getting rid of this thin subset of extensions can remove $100\%$ of $\Gamma$-extensions, and inevitably, we would be trying to understand the statistics of unramified extensions of $0\%$ of $\Gamma$-extensions. 
 \end{remark}
    In the formulation of Malle's conjectures in \cite{LS} and \cite{Wan}, roots of unity coming from property $(2)$ are not taken into account. Therefore, if we define $E''_{\Gamma, (\Gamma_v)_{v\in T}}(D,Q)$ to be the subset of $E_{\Gamma, (\Gamma_v)_{v\in T}}(D,Q)$ satisfying property $(1)$ in the definition of $E'_{\Gamma, (\Gamma_v)_{v\in T}}(D,Q)$, it may be reasonable to conjecture the following:

 \begin{conjecture}\label{conj: E''}
    \Cref{conj: main} is true with every instance of $E'_{\Gamma, (\Gamma_v)_{v\in T}}(D,Q)$ replaced by $E''_{\Gamma, (\Gamma_v)_{v\in T}}(D,Q)$. 
 \end{conjecture}
 \begin{remark}\label{rmk: intermediate fields}
    Of course, if any intermediate field $E$ appears with positive probability in the set of $\Gamma$-extensions even after removing the bad cyclotomic subextensions as in \Cref{rmk: malle}, then we should also get rid of these extensions in principle. However, the author does not know of any such examples where $E\not\subset Q(\mu_{\exp|\Gamma|})$, so we did not consider such extensions in $E'_{\Gamma, (\Gamma_v)_{v\in T}}(D,Q)$. However, if these extensions exist, \Cref{conj: main} will be unlikely to be correct. One possible remedy is to look for a counting function $C$ on the set of $\Gamma$-extensions $K/Q$ so that the probability that $K/Q$ contains any nontrivial intermediate field is $0$, and give \Cref{conj: main} filtered by $C$ instead of $\rDisc K/Q$. In this case, we can replace $E'_{\Gamma, (\Gamma_v)_{v\in T}}(D,Q)$ with $E_{\Gamma, (\Gamma_v)_{v\in T}}(D,Q)$ if we also replace $\rDisc K/Q=D$ with $C(K)=D$. For completeness, we state this conjecture as well. 
 \end{remark}
    Denote by $E_{\Gamma, (\Gamma_v)_{v\in T}}^C(D,Q)$ the subset of $E_{\Gamma, (\Gamma_v)_{v\in T}}(Q)$ consisting of extensions $K/Q$ where $C(K)=D$ for some counting function $C$.   
 \begin{conjecture}\label{conj: main 2}
    Let $H$ be a finite $\Delta_Q'$-$\Gamma$-group. If $\#E_{\Gamma, (\Gamma_v)_{v\in T}}(Q)\neq 0$ and $H$ is admissible, then for some counting function $C$ on $E_{\Gamma, (\Gamma_v)_{v\in T}}(Q)$ satisfying the property discussed in \Cref{rmk: intermediate fields} not depending on $H$, we have
    \begin{equation}\lim_{X\rightarrow\infty}\frac{\sum_{D\leq X}\sum_{K\in E_{\Gamma,(\Gamma_v)_{v\in T}}^C(D,Q)}|\Sur_\Gamma(G_{\text{\O}}^T(K)_{|\Gamma|'},H)|}{\sum_{D\leq X}\#E_{\Gamma,(\Gamma_v)_{v\in T}}^C(D,Q)}=\frac{|H^\Gamma|}{\prod_{v\in T}|H^{\Gamma_v}|}.\end{equation}
 \end{conjecture}
 When $T=S_{\infty}$, this conjecture is a special case of Conjecture $1.3$ in \cite{SW2}. Sawin and Wood allow $|H|$ to be coprime to just $|\Gamma|$ instead of $\Delta_Q$, and take into account the effects of roots of unity in the base field in the moment formulas.   
            
 We look forward to future research in understanding the relationship between presence of intermediate fields in $\Gamma$-extensions, roots of unity, and different counting functions.  
\section{Random group model}\label{sec: random}
 In this section, we define the random group model that is the basis of \Cref{conj: main}. We construct this by taking a limit of Borel probability measures $\mu_{n, \underline{\Gamma}}$, and showing that the limit $\mu_{\underline{\Gamma}}$ is indeed a countably additive Borel probability measure. This section is analogous to Sections $4$ and $5$ of \cite{lwzb} and outlined in Section $6$ of \cite{LW2}. 
 \begin{definition}\label{def: random group}
    Let $\Gamma$ be a finite group and let $\Gamma_1,\Gamma_2, \dots, \Gamma_{u+1}$ be a fixed ordered tuple of subgroups of $\Gamma$, for $u\geq 0$. We will denote this tuple by $\underline{\Gamma}:=(\Gamma_i)_{1\leq i\leq u+1}$. For a positive integer $n$, define a random $(\Gamma, \underline{\Gamma})$-group $X_{n,\underline{\Gamma}}:=\mathcal{F}_n/[Y(S_1),S_2]$ where $S_1$ is a tuple of $n+1$ elements $(x_1,x_2,\dots, x_n, x_{n+1})$ where $x_i$ for $1\leq i \leq n$ is chosen independently and uniformly at random with respect to Haar measure on $\mathcal{F}_n$, $x_{n+1}$ is chosen randomly with respect to the Haar measure on $\mathcal{F}_n^{\Gamma_1}$, and $S_2$ is a tuple of $u$ elements $(x_{n+2},\dots, x_{n+u+1})$ chosen uniformly at random with respect to the Haar measure on $\mathcal{F}_n^{\Gamma_2}\times \mathcal{F}_n^{\Gamma_3}\times \cdots\times \mathcal{F}_n^{\Gamma_{u+1}}$. 
 \end{definition}
 In the above definition, the dependence on $\Gamma$ is implicit. Directly from the formulas for the probabilities, it will turn out that the distribution does not depend on the ordering of the groups $\Gamma_i$. 

 We set $\mathcal{P}$ to be the set of isomorphism classes of admissible $\Gamma$-groups $X$ which satisfy the property that for any finite set $\mathcal{C}$ of $|\Gamma|'$-$\Gamma$-groups, $X^{\mathcal{C}}$ is finite, as in Section 3.2 of \cite{lwzb}. Moreover, $X_{n,\underline{\Gamma}}\in \mathcal{P}$ since it is a $\Gamma$-quotient of $\mathcal{F}_n$, which also lies in $\mathcal{P}$. By the comment after \Cref{cor: multiplicity bound}, $G_{\text{\O}}^T(K)_{\Delta_Q'}$ lies in $\mathcal{P}$ as well. The space $\mathcal{P}$ is equipped with a topology defined by basic open sets $U_{\mathcal{C},H}:=\{X\in \mathcal{P}: X^{\mathcal{C}}\cong H\}$ for all $\mathcal{C}$ finite and $H$ a finite $\Gamma$-group. 

 \begin{definition}\label{def: measure mun}
    For a Borel set $A\subset \mathcal{P}$, define the measure 
    \begin{equation}\mu_{n,\underline{\Gamma}}(A):=\Prob\left(X_{n,\underline{\Gamma}}\in A\right).\end{equation}
    Thus, $\mu_{n,\underline{\Gamma}}(A)$ is the probability that when we pick $(S_1,S_2)=((x_1,\dots,x_{n+1}), (x_{n+2},\dots ,  x_{n+u+1}))$ with respect to the Haar measure on $\mathcal{F}_n^n\times\mathcal{F}_n^{\Gamma_1}\times\cdots\times\mathcal{F}_n^{\Gamma_{u+1}}$, the group $\mathcal{F}_n/[Y(S_1), S_2]$ lies in $A$. 
 \end{definition}
 This measure is invariant under conjugating $\Gamma_i$ for each $i$, so we can think of $\mu_{n,\underline{\Gamma}}$ as being associated to a $u+1$-tuple of conjugacy classes of subgroups of $\Gamma$.  

 Next we give the analogue of Theorem $4.12$ of \cite{lwzb} and Theorem $6.3(1)$ of \cite{LW2}. See the beginning of Section $4$ of \cite{lwzb} for definitions of $\mathcal{A}_H$, $\mathcal{N}$, and the lines before Theorem $4.12$ of \cite{lwzb} for the definition of $\lambda(\mathcal{C},H,G)$ (see also \eqref{eq: lambda}) appearing in the next theorem.
 \begin{theorem}\label{thm: H-probability}
    Let $\mathcal{C}$ be a finite set of finite $|\Gamma|'$-$\Gamma$-groups, and let $H$ be a finite admissible $|\Gamma|'$-$\Gamma$-group of level $\mathcal{C}$. Then 
    \begin{eqnarray}\Prob\left(X_{n,\underline{\Gamma}}^\mathcal{C}\cong_\Gamma H\right)&=&\frac{|\Sur_{\Gamma}(\mathcal{F}_n, H)||H^\Gamma|^{n+1}}{|\Aut_\Gamma(H)||H|^n\prod_{i=1}^{u+1}|H^{\Gamma_i}|}\prod_{G\in\mathcal{A}_H}\prod_{j=0}^{m(\mathcal{C},n,H,G)-1}\left(1-\frac{h_{H\rtimes\Gamma}(G)^{j}|G^\Gamma|^{n+1}}{|G|^n\prod_{i=1}^{u+1}|G^{\Gamma_i}|}\right)\notag\\&&\cdot\prod_{G\in\mathcal{N}}\left(1-\frac{|G^\Gamma|^{n+1}}{|G|^n|\prod_{i=1}^{u+1}|G^{\Gamma_i}|}\right)^{m(\mathcal{C},n,H,G)}.\end{eqnarray}
    Taking $n\rightarrow\infty$, \begin{eqnarray}\lim_{n\rightarrow\infty}\Prob\left(X_{n,\underline{\Gamma}}^\mathcal{C}\cong_\Gamma H\right)&=&\frac{|H^\Gamma|}{|\Aut_\Gamma(H)|\prod_{i=1}^{u+1}|H^{\Gamma_i}|}\prod_{G\in\mathcal{A}_H}\prod_{j=1}^{\infty}\left(1-\lambda(\mathcal{C},H,G)\frac{h_{H\rtimes\Gamma}(G)^{-j}|G^\Gamma|}{\prod_{i=1}^{u+1}|G^{\Gamma_i}|}\right)\notag\\&&\cdot\prod_{G\in\mathcal{N}}e^{-\frac{|G^\Gamma|\lambda(\mathcal{C},H,G)}{\prod_{i=1}^{u+1}|G^{\Gamma_i}|}}, \end{eqnarray}
    unless $H$ is not of the form $X_{n,\underline{\Gamma}}^\mathcal{C}$ for any $n$, in which case the limit is $0$. 
 \end{theorem}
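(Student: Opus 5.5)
We follow the proof of Theorem 4.12 of \cite{lwzb} and Theorem 6.3(1) of \cite{LW2}, replacing their relation-probability lemma with \Cref{prop: relation probability}. Since $\mathcal{C}$ is finite, $\mathcal{F}_n^{\mathcal{C}}$ is finite. The event $X_{n,\underline{\Gamma}}^{\mathcal{C}}\cong_\Gamma H$ depends only on the images of the $x_i$ in $\mathcal{F}_n^{\mathcal{C}}$. Moreover, taking $\Gamma_i$-invariants is exact on $|\Gamma|'$-$\Gamma$-groups, so these images are Haar distributed on $(\mathcal{F}_n^{\mathcal{C}})^n\times(\mathcal{F}_n^{\mathcal{C}})^{\Gamma_1}\times\cdots\times(\mathcal{F}_n^{\mathcal{C}})^{\Gamma_{u+1}}$.

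\textbf{Step 1: reduce to counting pairs.} We count pairs consisting of a surjection $\pi\in\Sur_\Gamma(\mathcal{F}_n^{\mathcal{C}},H)$ and a relator tuple with $\ker\pi=[Y(S_1),S_2]$. Each such event occurs for exactly $|\Aut_\Gamma(H)|$ choices of $\pi$, so
\[
\Prob\bigl(X_{n,\underline{\Gamma}}^{\mathcal{C}}\cong H\bigr)=\frac{1}{|\Aut_\Gamma(H)|}\sum_{\pi}\Prob\bigl(\ker\pi=[Y(S_1),S_2]\bigr).
\]

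\textbf{Step 2: the probability for a fixed $\pi$.} Fix $\pi$. The relators must lie in $\ker\pi$, and the probability of this is
\[
|H|^{-n}\prod_{i=1}^{u+1}|H^{\Gamma_i}|^{-1}.
\]
This uses that $(\mathcal{F}_n^{\mathcal{C}})^{\Gamma_i}\to H^{\Gamma_i}$ is surjective, which holds by exactness of invariants. Conditioned on this event, the relators are Haar distributed on $(\ker\pi)^n\times\prod_i(\ker\pi)^{\Gamma_i}$.

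We then pass to the fundamental quotient $R$ of $\ker\pi$ from Section 4 of \cite{lwzb}. Normal generation of $\ker\pi$ is equivalent to generating $R\cong\prod_G G^{m(\mathcal{C},n,H,G)}$, by the Frattini-type argument used there. We apply \Cref{prop: relation probability} with $F=\mathcal{F}_n^{\mathcal{C}}$. Using $|Y(G)|=|G|/|G^\Gamma|$, this gives exactly the displayed products over $\mathcal{A}_H$ and $\mathcal{N}$.

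It remains to account for the factor $|H^\Gamma|^{n+1}$. The condition $Y(x_i)\in\ker\pi$ only requires $\pi(x_i)\in H^\Gamma$, not $\pi(x_i)=1$. So the relevant conditioning is on $\pi(x_i)\in H^\Gamma$ for $i\le n+1$ and $\pi(x_j)=1$ for $j\ge n+2$. The first condition has probability $|H^\Gamma|^n/|H|^n$ for $i\le n$ and $|H^\Gamma|/|H^{\Gamma_1}|$ for $i=n+1$. The second has probability $\prod_{j\ge2}|H^{\Gamma_j}|^{-1}$. We also replace $x_i$ by $x_i h_i$ with $h_i$ a lift of $\pi(x_i)^{-1}\in H^\Gamma$. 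A lift can be chosen $\Gamma_1$-fixed for $i=n+1$, again by exactness. Since $Y$ of a $\Gamma$-fixed element is trivial, this should leave the generated subgroup unchanged modulo the correct normal closure. Justifying this is the step I expect to be delicate, since $Y(xh)\ne Y(x)$ in general. I would argue at the level of $R$, where the commutator corrections vanish by \Cref{prop: relation probability} applied to $H\rtimes\Gamma$-modules. Summing over the $|\Sur_\Gamma(\mathcal{F}_n,H)|$ surjections then gives the first formula.

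\textbf{Step 3: the limit $n\to\infty$.} We use the known asymptotics from \cite{lwzb}:
\begin{itemize}
\item $|\Sur_\Gamma(\mathcal{F}_n,H)|\,|H^\Gamma|^n/|H|^n\to1$ for admissible $H$;
\item $m(\mathcal{C},n,H,G)$ grows so that $h(G)^{m}|G^\Gamma|^n/|G|^n$ tends to $\lambda(\mathcal{C},H,G)$ (see \eqref{eq: lambda}).
\end{itemize}
Reindexing $j\mapsto m-j$ in the abelian product gives terms of the form $1-\lambda\,h^{-j}|G^\Gamma|/\prod_i|G^{\Gamma_i}|$. In the nonabelian case, $(1-a/|G|^n\cdots)^{m}$ tends to an exponential. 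Finally, dominated convergence controls the infinite products, exactly as in \cite{lwzb}. When $\lambda=0$ for all large $n$, $H$ is not of the form $X_{n,\underline{\Gamma}}^{\mathcal{C}}$, and the limit is $0$.
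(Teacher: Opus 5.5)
\textbf{Finite-$n$ formula.} Your argument here follows the paper's. You sum over surjections modulo $|\Aut_\Gamma(H)|$, where the paper sums over kernels $N$. You use the containment probability $|H^\Gamma|^{n+1}/(|H|^n\prod_{i=1}^{u+1}|H^{\Gamma_i}|)$, then the Frattini-type reduction to $R=N/M$, then \Cref{prop: relation probability}. That proposition should be applied with $F=\mathcal{F}_n^{\mathcal{C}}/M$, not $\mathcal{F}_n^{\mathcal{C}}$.

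The step you call delicate is immediate, and the justification you sketch via \Cref{prop: relation probability} is not the relevant one. If $h\in(\mathcal{F}_n^{\mathcal{C}})^\Gamma$, then $Y(hx)=Y(x)$ exactly (and $Y(xh)_\gamma=h^{-1}Y(x)_\gamma h$), so the normal closure does not change.
\begin{itemize}
\item Fix a set-theoretic section $s\colon H^\Gamma\to(\mathcal{F}_n^{\mathcal{C}})^\Gamma$, which exists by exactness of invariants.
\item Replace $x_i$ by $s(\pi(x_i))^{-1}x_i$ for $i\le n+1$. For $i=n+1$ the lift must be $\Gamma$-fixed, not merely $\Gamma_1$-fixed.
\item Conditionally on $\pi(x_i)\in H^\Gamma$, the new elements are Haar on $\ker\pi$, resp.\ $(\ker\pi)^{\Gamma_1}$.
\end{itemize}
The paper encodes the same fact by observing that, under this conditioning, $Y(S_1)$ is uniform on $Y(N)^n\times Y(N^{\Gamma_1})$.

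\textbf{The gap: Step 3.} The paper's proof has the same gap: it asserts that $m\to\infty$ is equivalent to $H$ occurring. Your asymptotic $h^m|G^\Gamma|^n/|G|^n\to\lambda(\mathcal{C},H,G)$ requires $|Y(G)|>1$. Suppose a chief factor with $G=G^\Gamma$ occurs in $R$. Then $|Y(G)|=1$ and $m(\mathcal{C},n,H,G)$ stays bounded. Its finite-$n$ factor is $\prod_{j<m}(1-h^j|G|^{-u})$, resp.\ $(1-|G|^{-u})^m$, with no $|Y(G)|^{-n}$ decay. For $u=0$ this factor is $0$, which is why \cite{lwzb} can equate $m\to\infty$ with $H$ occurring. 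For $u\ge1$ it can be positive, and then it disagrees with the limit formula. Your closing sentence does not address this: $\lambda$ is independent of $n$, and $\lambda=0$ only says that $G$ does not occur.

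\textbf{Example.} Take the following data.
\begin{itemize}
\item $\Gamma=\mathbb{Z}/2$, $u=1$, and $\Gamma_1=\Gamma_2=1$.
\item $\mathcal{C}=\{E\}$, with $E$ the Heisenberg group of order $27$ on which $\sigma$ inverts two generators (hence fixes the center).
\item $H=E/Z(E)$.
\end{itemize}
Then $\mathcal{F}_n^{\mathcal{C}}$ is the free class-$2$ exponent-$3$ group on $n$ generators inverted by $\sigma$. We get $R\cong(\mathbb{F}_3^-)^{n-2}\oplus\mathbb{F}_3^+$, where $H$ acts trivially and the superscript gives the action of $\sigma$. The factor for $G=\mathbb{F}_3^+$ is $1-3^{n+1}/3^{n+2}=2/3$ for every $n\ge2$, since only $x_{n+2}$ can reach it. So $H$ does occur. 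The limit formula, with $\lambda=2$ from \eqref{eq: lambda}, predicts $\prod_{j\ge1}(1-2\cdot3^{-j-1})\approx0.694$ instead.

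Hence the second display is valid only when no $\Gamma$-fixed chief factor occurs in $R$. Otherwise those factors must be kept in their finite-$n$ form.
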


 \begin{proof}
    The proof is very similar to Theorem $4.12$ of \cite{lwzb} and Theorem $6.3(1)$ of \cite{LW2}. We present the argument here for completeness, highlighting minor differences. 

    Set $\overline{x_i}$ to be the image of $x_i\in \mathcal{F}_n$ in the quotient $\mathcal{F}_n^\mathcal{C}$. We have 
    \[\Prob\left(X_{n,\underline{\Gamma}}^\mathcal{C}\cong H\right)=\sum_{\substack{N\subset\mathcal{F}_n^{\mathcal{C}},\text{ closed subgroup}\\\mathcal{F}_n^{\mathcal{C}}\rtimes\Gamma\text{-normal, } \mathcal{F}_n^{\mathcal{C}}/N\cong H}}\Prob \left([Y(\{\overline{x_1},\dots \overline{x_{n+1}}\}),\overline{x_{n+2}},\dots, \overline{x_{n+u+1}}]_{\mathcal{F}_n^\mathcal{C}\rtimes\Gamma}=N\right).\]

    The probability $\Prob \left([Y(\{\overline{x_1},\dots \overline{x_{n+1}}\}),\overline{x_{n+2}},\dots, \overline{x_{n+u+1}}]_{\mathcal{F}_n^{\mathcal{C}}\rtimes\Gamma}=N\right)$ is the same probability as if we pick $r_i:=\overline{x_i}\in\mathcal{F}_n^\mathcal{C}$ for $1\leq i \leq n$ and $r_{n+i}:=\overline{x_{n+i}}\in (\mathcal{F}_n^\mathcal{C})^{\Gamma_i}$ for $1\leq i\leq u+1$ independently and uniformly at Haar random and getting the same event. 
    
    Consider the fundamental exact sequence $1\rightarrow R\rightarrow F\rightarrow H\rightarrow 1$ as in the proof of \Cref{thm: main} induced from the surjection $\pi: \mathcal{F}_n^\mathcal{C}\rightarrow H$ by quotienting out $N=\ker\pi$ of this surjection and $\mathcal{F}_n^{\mathcal{C}}$ by the intersection of maximal $\mathcal{F}_n^{\mathcal{C}}\rtimes\Gamma$-normal subgroups of $N$, denoted $M$ (so $R=N/M, F=\mathcal{F}_n^{\mathcal{C}}/M$). We set $S_1=(r_1,\dots, r_{n+1}), S_2=(r_{n+2},\dots, r_{n+u+1})$, and $\overline{S_1}, \overline{S_2}$ be their images in $F$, respectively, throughout this discussion. Now, 
    \begin{equation}\label{eq: probability in fixed kernel}\Prob\left([Y(S_1), S_2]\subset N\right)=\frac{1}{|Y(H)|^n|Y(H^{\Gamma_1})|\prod_{i=2}^{u+1}|H^{\Gamma_i}|}=\frac{|H^{\Gamma}|^{n+1}}{|H|^n\prod_{i=1}^u|H^{\Gamma_i}|}.\end{equation}

    In addition, we claim that 

    \begin{equation}\Prob\left([Y(S_1),S_2]=N\big|[Y(S_1), S_2]\subset N\right)=\Prob\left([Y(\overline{S_1}),\overline{S_2}]=R\right).
    \end{equation}
    This claim follows from the argument in Theorem $4.12$ of \cite{lwzb} and Theorem $6.3(1)$ of \cite{LW} that elements in $N$ $\mathcal{F}_n^\mathcal{C}\rtimes\Gamma$-normally generate $N$ if and only if their image in $N/M$ normally generate $N/M$, which gives
    \begin{eqnarray}
    &&\Prob\left([Y(S_1),S_2]=N\big|[Y(S_1), S_2]\subset N\right)\nonumber\\
    &=&\frac{|F^\Gamma|^{n+1}|M|^n\prod_{i=1}^{u+1}|M^{\Gamma_i}|\#\{(Y(\overline{S_1}), \overline{S_2})\text{ satisfying }[Y(\overline{S_1}),\overline{S_2}]=R \}}{|(\mathcal{F}_n^\mathcal{C})^{\Gamma}|^{n+1}|Y(N)|^n|Y(N^{\Gamma_1})|\prod_{i=2}^{u+1}|N^{\Gamma_i}|}\nonumber\\
    &=&\frac{\#\{(Y(\overline{S_1}),\overline{S_2})\text{ satisfying }[Y(\overline{S_1}),\overline{S_2}]=R \}}{|Y(R)|^n|Y(R^{\Gamma_1})|\prod_{i=2}^{u+1}|R^{\Gamma_i}|}\nonumber\\
    &=&\Prob\left([Y(\overline{S_1}),\overline{S_2}]=R\right)\nonumber
    \end{eqnarray}
    where $(\overline{S_1},\overline{S_2})$ is chosen independently and uniformly at random from $(R^n\times R^{\Gamma_1})\times(R^{\Gamma_2}\cdots \times R^{\Gamma_{u+1}})$. 
    
    We can calculate this last probability using \Cref{prop: relation probability}, and this quantity does not depend on $N$, so we can collect the sum to $|\Sur_{\Gamma}(\mathcal{F}_n,H)|/|\Aut_\Gamma(H)|$, giving us the first part of the proposition. 
    
    The second part of the proposition follows from similar limit calculations as that in Theorem $4.12$ of \cite{lwzb}:
    \[\lim_{n\rightarrow\infty}\left(1-\frac{|G^\Gamma|}{|Y(G)|^n\prod_{i=1}^{u+1}|G^{\Gamma_i}|}\right)^{m(\mathcal{C},n,H,G)}=e^{-\frac{|G^\Gamma|\lambda(\mathcal{C},H,G)}{\prod_{i=1}^{u+1}|G^{\Gamma_i}|}},\]
    in the case when $G$ is a nonabelian element of $\mathcal{N}$ discussed in \cite{lwzb}, and
    \[\lim_{n\rightarrow \infty}\prod_{j=0}^{m(\mathcal{C},n,H,G)-1}\left(1-\frac{h_{H\rtimes\Gamma}(G)^j|G^\Gamma|^{n+1}}{|G|^n\prod_{i=1}^{u+1}|G^{\Gamma_i}|}\right)=\prod_{j=1}^\infty\left(1-\lambda(\mathcal{C},H,G)\frac{h_{H\rtimes\Gamma}(G)^{-j}|G^\Gamma|}{\prod_{i=1}^{u+1}|G^{\Gamma_i}|}\right)\]
    when $G\in \mathcal{A}_H$. These limits only hold when the multiplicity approaches infinity as $n\rightarrow\infty$, which is equivalent to saying that for some $n$, $H$ is of the form $X_{n,\underline{\Gamma}}^\mathcal{C}$. Finally, we use $(4.18)$ of \cite{lwzb} to obtain $\lim_{n\rightarrow \infty}\frac{|\Sur_{\Gamma}(\mathcal{F}_n,H)|}{|Y(H)|^n}=1$, which finishes the proof.  
 \end{proof}

 \begin{remark}\label{rmk: probability 0}
    The limit of probabilities $\Prob\left(X_{n,\underline{\Gamma}}^\mathcal{C}\cong_\Gamma H\right)$ is $0$ if and only if for large enough $n$, $\Prob\left(X_{n,\underline{\Gamma}}^\mathcal{C}\cong_\Gamma H\right)=0$ if and only if $H$ is not of the form $X_{n,\underline{\Gamma}}^\mathcal{C}$ for any $n$. This is also equivalent to the existence of $G\in\mathcal{A}_H\cup\mathcal{N}$ so that $G=G^\Gamma$ and $m(\mathcal{C},n,H,G)>0$ for some $n$. 
 \end{remark}
 \begin{definition}\label{def: mu}
    Let $\mathcal{A}$ be an algebra of sets generated by basic open sets $U_{\mathcal{C},H}$, for finite admissible $|\Gamma|'$-$\Gamma$-groups $H$ and finite sets of finite $|\Gamma|'$-$\Gamma$-groups $\mathcal{C}$. For $A\in\mathcal{A}$, define 
    \begin{equation}\mu_{\underline{\Gamma}}(A):=\lim_{n\rightarrow \infty}\mu_{n,\underline{\Gamma}}(A).\end{equation}
 \end{definition}
 
 The previous theorem shows this limit exists. We now want to mimic Section $5$ of \cite{lwzb} to show that $\mu_{\underline{\Gamma}}$ extends to a Borel probability measure. Denote by $P_{n,\underline{\Gamma}}$ to be the following:
 \begin{equation}P_{n,\underline{\Gamma}}(U_{\mathcal{C},H}):=\prod_{G\in\mathcal{A}_H}\prod_{j=0}^{m(\mathcal{C},n,H,G)-1}\left(1-\frac{h_{H\rtimes\Gamma}(G)^{j}|G^\Gamma|^{n+1}}{|G|^n\prod_{i=1}^{u+1}|G^{\Gamma_i}|}\right)\prod_{G\in\mathcal{N}}\left(1-\frac{|G^\Gamma|^{n+1}}{|G|^n|\prod_{i=1}^{u+1}|G^{\Gamma_i}|}\right)^{m(\mathcal{C},n,H,G)}.\end{equation}
 
 Our goal is to replace every instance of $P_{u,n}$ in Section $5$ of \cite{lwzb} with $P_{n,\underline{\Gamma}}$ defined above. In fact, when $u=0,1$, $P_{u,n}=P_{n,\underline{\Gamma}}$ where $\underline{\Gamma}=(\Gamma), (1)$ respectively, and these are both $1$-tuples. We could also extend this random group model to include a parameter that changes the number of admissible relators by a constant independent of $n$ (i.e. relators of the form $r^{-1}\gamma(r)$ for all $\gamma$) as we vary $r$ in some subgroup of $\mathcal{F}_n$ of the form $\mathcal{F}_n^{\Gamma_0}$ where $\Gamma_0\subset\Gamma$ is a subgroup; we can certainly compute the moments of such a random group model using the same method as well. However, we don't need this generality when conjecturing the distribution of $G_{\text{\O}}^T(K)^{\mathcal{C}}$, so we do not pursue it in this paper. 

 Set $\mathcal{C}_\ell$ to be the set of isomorphism classes of all $|\Gamma|'$-$\Gamma$-groups of order $\leq \ell$, as in Section 5 of \cite{lwzb}. We have the following analogue of Corollary $5.9$ in \cite{lwzb}, proved in exactly the same way as Lemma $9.5$ of \cite{LW}. 

 \begin{corollary}\label{cor: probability lower bound}
    Suppose $\ell>1$, $n\geq 1$ are integers and $\widetilde{H}$ is a $\Gamma$-group of level $\mathcal{C}_{\ell-1}$. Then there is a positive constant $c(\ell,\widetilde{H})$ not depending on $n$, so that for each $\Gamma$-group $H$ of level $\mathcal{C}_\ell$ satisfying $H^{\mathcal{C}_{\ell-1}}\cong \widetilde{H}$, we have $P_{n,\underline{\Gamma}}(U_{\mathcal{C}_\ell,H})\geq c(\ell,\widetilde{H})$ or $P_{n,\underline{\Gamma}}(U_{\mathcal{C}_\ell,H})=0$. 
 \end{corollary}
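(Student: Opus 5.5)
The plan is to follow the proof of Lemma 9.5 of \cite{LW} (and Corollary 5.9 of \cite{lwzb}), replacing $P_{u,n}$ by $P_{n,\underline{\Gamma}}$ throughout.

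Fix $\ell$ and $\widetilde{H}$, and let $H$ be of level $\mathcal{C}_\ell$ with $H^{\mathcal{C}_{\ell-1}}\cong\widetilde{H}$. First I will bound the data entering $P_{n,\underline{\Gamma}}(U_{\mathcal{C}_\ell,H})$ uniformly in $H$. Since $H$ is a quotient of a product of groups of order $\le \ell$ with prescribed $\mathcal{C}_{\ell-1}$-quotient, the relevant irreducible $H\rtimes\Gamma$-groups $G\in\mathcal{A}_H\cup\mathcal{N}$ with $m(\mathcal{C}_\ell,n,H,G)>0$ are subquotients of groups in $\overline{\mathcal{C}_\ell}$. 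Their orders are bounded by a constant depending only on $\ell$, so only finitely many isomorphism types of $G$, with $H$ acting through a bounded quotient, can occur.

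The main input is the multiplicity formula from \cite{lwzb} (Section 4, via Proposition 3.4 and Corollary 4.5 of \cite{Liu2}). It shows that $m(\mathcal{C}_\ell,n,H,G)$ grows linearly in $n$, with leading behaviour $n\log_{h(G)}|Y(G)|$ plus a bounded correction, whose bound depends only on $\ell$ and $\widetilde{H}$. Factors with $G=G^\Gamma$ and $m>0$ give $P_{n,\underline{\Gamma}}=0$, as in \Cref{rmk: probability 0}; this is the ``or $=0$'' alternative. Otherwise $|Y(G)|>1$, and I write each factor as $1-h(G)^{j}|G^\Gamma|^{n+1}/(|G|^n\prod_i|G^{\Gamma_i}|)$. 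Reindexing $j=m-k$, the exponent $h(G)^{m-k}/|Y(G)|^n$ is at most $C\,h(G)^{-k}$ for a constant $C$ depending on $\ell,\widetilde{H}$. Each factor is positive whenever the product is nonzero, and the product is then a finite product of terms of the form
\[\prod_{k\ge1}\Bigl(1-c\,h^{-k}\Bigr),\]
which is bounded below by an explicit positive constant independent of $n$. Nonabelian $G\in\mathcal{N}$ are handled in the same way: $(1-x_n)^{m}$ with $x_n m$ bounded gives a lower bound like $e^{-c'}$, using that $x_n$ is bounded away from $1$ when the factor is nonzero (every nonzero factor lies in a finite set of possible values for small $n$).

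The main obstacle is making these bounds uniform over all $H$ lifting $\widetilde{H}$, since there may be infinitely many such $H$. I will handle it as in \cite{LW}: the multiplicities for level $\mathcal{C}_\ell$ differ from those for $\widetilde{H}$ only at modules that are bounded in size, and the additional fixed-point factors $\prod_i|G^{\Gamma_i}|$ are bounded by $|G|^{u+1}$. Taking $c(\ell,\widetilde{H})$ to be the minimum over the finitely many possible module types and correction terms then gives the corollary.
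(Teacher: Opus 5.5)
Your outline has the right overall shape, but the lower bound for the abelian factors does not work as written, and the uniformity over lifts of $\widetilde H$ is asserted without the input that actually provides it.

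\textbf{The abelian factors.} Write $x_k$ for the subtracted term in the factor with $j=m-k$, where $m=m(\mathcal{C}_\ell,n,H,G)$. Knowing only $x_k\le C h^{-k}$ and $x_k<1$ gives no bound independent of $n$ and $H$. The product $\prod_{k\ge1}(1-c\,h^{-k})$ is positive only when $c<h$, and nothing in your argument forces your constant (essentially $1+\lambda$) below $h$. Positivity of each factor does not rescue this either. A priori $x_k=h^{m-k}/D_n$ with $D_n=|Y(G)|^n|Y(G^{\Gamma_1})|\prod_{i\ge2}|G^{\Gamma_i}|$, and such a ratio could lie in $(0,1)$ arbitrarily close to $1$ as $n$ varies.

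The missing idea is \Cref{cor: fixed point number}: $|Y(G)|=h^{b}$, $|Y(G^{\Gamma_1})|=h^{b_1}$ and $|G^{\Gamma_i}|=h^{b_i}$ are all integral powers of $h=h_{H\rtimes\Gamma}(G)$. Hence the $j$-th factor is exactly $1-h^{-a_j}$ with $a_j=bn+b_1+\sum_{i\ge2}b_i-j\in\mathbb{Z}$. If the product is nonzero, every $a_j\ge1$ and the $a_j$ are distinct, so the product over $j$ is at least $\prod_{j\ge1}(1-2^{-j})$. This is the paper's argument. It needs no information about how $m$ grows with $n$, so the multiplicity formula is unnecessary for the abelian part.

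It also handles the $G$ with $|Y(G)|=1$, which your case split disposes of incorrectly when $u\ge1$. For $G=G^\Gamma$ the factor is $1-h^{j}|G|^{-u}$, which need not vanish, so such $G$ do not force $P_{n,\underline{\Gamma}}=0$.

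\textbf{Uniformity in $H$.} You correctly identify uniformity over the infinitely many lifts $H$ of $\widetilde H$ as the main obstacle, but you do not resolve it. Bounded order of $G$ together with ``$H$ acting through a bounded quotient'' does not bound the number of isomorphism classes of relevant irreducible $H\rtimes\Gamma$-groups. The number of homomorphisms from $H\rtimes\Gamma$ to a fixed finite group need not stay bounded as $H$ varies.

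What is needed is Lemma 5.4 of \cite{lwzb}. A $G$ with $m(\mathcal{C}_\ell,n,H,G)>0$ yields a chief factor pair in the finite set $\mathcal{C}\mathcal{F}_\Gamma(\overline{\mathcal{C}_\ell})$ (Corollary 5.3 of \cite{lwzb}), with the action map factoring through $\widetilde H\rtimes\Gamma$. This gives two counts, both independent of $H$ and $n$:
\begin{itemize}
\item the number of such $G$ is at most $\sum_{(M,A)}|\Sur(\widetilde H\rtimes\Gamma,A)|$;
\item $\#\mathcal{E}_{\mathcal{C}_\ell}(G,H)\le\sum_{(G,A)}|\Sur(\widetilde H\rtimes\Gamma,A/\Inn(G))|$.
\end{itemize}
The second count, combined with the multiplicity formula and the bound $|\Sur_\Gamma(\mathcal{F}_n^{\mathcal{C}}\rightarrow H,\pi)|\le|Y(G)|^n$ from Lemma 4.6 of \cite{lwzb}, is exactly what bounds the quantity $m\,x$ in your nonabelian step. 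Your remark that the multiplicities ``differ from those for $\widetilde H$ only at modules bounded in size'' does not supply this count.

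With these two inputs in place, your nonabelian estimate agrees with the paper's. It uses $1-x\ge 4^{-x}$ for $0\le x\le 1/2$, which applies because $x$ is the reciprocal of an integer $\ge2$ whenever the factor is nonzero.
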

 \begin{proof}
    Suppose first that $G\in\mathcal{A}_H$. By \Cref{cor: fixed point number}, we see that $|Y(G)|=h_{H\rtimes\Gamma}(G)^{m}, |Y(G^{\Gamma_1})|=h_{H\rtimes\Gamma}(G)^{m'}, |G^{\Gamma_i}|=h_{H\rtimes\Gamma}(G)^{m_i}$, for some nonnegative integers $m,m', m_i$. The product \[\prod_{j=0}^{m(\mathcal{C}_\ell,n,G,H)-1}\left(1-\frac{h_{H\rtimes\Gamma}(G)^j}{|Y(G)|^n|Y(G^{\Gamma_1})|\prod_{i=2}^{u+1}|G^{\Gamma_i}|}\right)\] 
    being nonzero implies that 
    \begin{align*}
        1-\frac{h_{H\rtimes\Gamma}(G)^j}{|Y(G)|^n|Y(G^{\Gamma_1})|\prod_{i=2}^{u+1}|G^{\Gamma_i}|}&=1-\frac{h_{H\rtimes\Gamma}(G)^{mn+m'+\sum_{i=2}^{u+1}m_i}}{h_{H\rtimes\Gamma}(G)^{mn+m'+\sum_{i=2}^{u+1}m_i-j}|Y(G)|^n|Y(G^{\Gamma_1})|\prod_{i=2}^{u+1}|G^{\Gamma_i}|}\\
        &=1-\frac{1}{h_{H\rtimes\Gamma}(G)^{mn+m'+\sum_{i=2}^{u+1}m_i-j}})\\
        &\geq 1-\frac{1}{2^{mn+m'+\sum_{i=2}^{u+1}m_i-j}}
    \end{align*}
    where $mn+m'+\sum_{j=2}^{u+1}m_i-j$ is a positive integer for all $0\leq j \leq m(\mathcal{C}_\ell,n,G,H)-1$. Thus we obtain the same estimate as in Lemma $9.5$ of \cite{LW}:
    \begin{align*}
        \prod_{G\in\mathcal{A}_H}\prod_{j=0}^{m(\mathcal{C}_\ell,n,H,G)-1}\left(1-\frac{|G^\Gamma|^{n+1}h_{H\rtimes\Gamma}(G)^{j}}{|G|^n\prod_{i=1}^{u+1}|G^{\Gamma_i}|}\right)&\geq\prod_{\substack{G\in\mathcal{A}_H\\ m(\mathcal{C}_\ell,n,G,H)\neq 0\\ \text{some} \:n}}\prod_{j=1}^\infty\left(1-\frac{1}{2^j}\right)\\
        &\geq \left(\prod_{j=1}^\infty\left(1-\frac{1}{2^j}\right)\right)^{\displaystyle\sum_{(M,A)\in\mathcal{C}\mathcal{F}_\Gamma(\overline{\mathcal{C}_\ell})}|\Sur(\widetilde{H}\rtimes\Gamma,A)|}
    \end{align*}
    where in the last line we use Lemma $5.4$ of \cite{lwzb}. This quantity only depends on $\ell,\widetilde{H}$ and not on $n$ or $H$, and is finite by finiteness of chief factor pairs $\mathcal{C}\mathcal{F}_\Gamma(\overline{\mathcal{C}_\ell})$ from Corollary $5.3$ of \cite{lwzb}. See \cite{lwzb} Section $5$ for the definition of $\mathcal{C}\mathcal{F}_\Gamma(\mathcal{C})$. 

    We can do a similar computation as Lemma $9.5$ of \cite{LW} for $G\in\mathcal{N}$ to get the inequality 
    \[\prod_{G\in\mathcal{N}}\left(1-\frac{1}{|Y(G)|^n|Y(G^{\Gamma_1})|\prod_{i=2}^{u+1}|G^{\Gamma_i}|}\right)^{m(\mathcal{C}_\ell,n,G,H)}\geq \left(1-\frac{1}{2}\right)^{2\sum_{G\in\mathcal{N}}\frac{m(\mathcal{C}_\ell,n,G,H)}{|Y(G)|^n|Y(G^{\Gamma_1})|\prod_{i=2}^{u+1}|G^{\Gamma_i}|}}\]
    whenever this product is nonzero, and the exponent is bounded from above via the following:
    {
    \allowdisplaybreaks
    \begin{align*}
    \sum_{G\in\mathcal{N}}\frac{m(\mathcal{C}_\ell,n,G,H)}{|Y(G)|^n|Y(G^{\Gamma_1})|\prod_{i=2}^{u+1}|G^{\Gamma_i}|}&=
        \sum_{G\in\mathcal{N}}\frac{|G^\Gamma|}{\prod_{i=1}^{u+1}|G^{\Gamma_i}|}\sum_{(E,\pi)\in\mathcal{E}_{\mathcal{C}_\ell}(G,H)}\frac{\sum_{D\in\mathcal{E}_H^E}\nu(D,E)\left(\frac{|Y(D)|}{|Y(H)|}\right)^n}{|Y(G)|^n|\Aut_{\Gamma,H}(E,\pi)|}\\
    &\leq  \sum_{G\in\mathcal{N}}\frac{|G^\Gamma|}{\prod_{i=1}^{u+1}|G^{\Gamma_i}|}\sum_{(E,\pi)\in\mathcal{E}_{\mathcal{C}_\ell}(G,H)}\frac{1}{|\Aut_{\Gamma,H}(E,\pi)|}\\
    &\leq \sum_{G\in\mathcal{N}}\frac{|G^\Gamma|}{\prod_{i=1}^{u+1}|G^{\Gamma_i}|}\#\mathcal{E}_{\mathcal{C}_\ell}(G,H)\\
    &\leq \sum_{G\in\mathcal{N}}\frac{|G^\Gamma|}{\prod_{i=1}^{u+1}|G^{\Gamma_i}|}\left(\sum_{(G,A)\in\mathcal{C}\mathcal{F}_\Gamma(\overline{\mathcal{C}_\ell})}|\Sur(\widetilde{H}\rtimes\Gamma,A/\Inn(G))|\right)\\
    &=\sum_{(G,A)\in\mathcal{C}\mathcal{F}_\Gamma(\overline{\mathcal{C}_\ell})}\frac{|G^\Gamma||\Sur(\widetilde{H}\rtimes\Gamma,A/\Inn(G))|}{\prod_{i=1}^{u+1}|G^{\Gamma_i}|}.
    \end{align*}
    }
    The first inequality follows from Lemma $4.6$ of \cite{lwzb}:
    \[\sum_{D\in \mathcal{E}_H^E}\nu(D,E)\left(\frac{|Y(D)|}{|Y(H)|}\right)^n=|\Sur_{\Gamma}(\mathcal{F}_n^{\mathcal{C}}\rightarrow H,\pi)|\leq |Y(G)|^n.\] 
    The second inequality is the trivial bound. The third inequality follows from Lemma $5.4$ of \cite{lwzb}. We again leave the definitions of $\mathcal{E}_{\mathcal{C}_\ell}(G,H), \mathcal{E}_H^E, \nu(D,E), \Aut_{\Gamma,H}(E,\pi), \Sur_\Gamma(\mathcal{F}_n^\mathcal{C}\rightarrow H, \pi)$ to Sections $4$ and $5$ of \cite{lwzb}, and $\Inn(G)$ is the set of inner automorphisms of $G$. Since the sum has finitely many terms, each term being finite, we have an upper bound on the exponent, hence a lower bound on $\prod_{G\in\mathcal{N}}(1-\frac{1}{|Y(G)|^n|Y(G^{\Gamma_\infty})|})^{m(\mathcal{C}_\ell,n,G,H)}$ only depending on $\ell,\widetilde{H}$ when the product is nonzero. 
 \end{proof}

 Since we know that the limit of $P_{n,\underline{\Gamma}}(U_{\mathcal{C}_\ell,H})$ exists as $n\rightarrow\infty$, the bound above gives that for $n$ large enough, we always have $P_{n,\underline{\Gamma}}(U_{\mathcal{C}_\ell,H})\geq c(\ell,\widetilde{H})$ or $P_{n,\underline{\Gamma}}(U_{\mathcal{C}_\ell,H})=0$. We now state the analogue of Lemma $5.10$ of \cite{lwzb}. 
 \begin{lemma}\label{lem: switch limits}
    Suppose $f_n(H,\ell)$ are nonnegative functions for all positive integers $n,\ell$, and $H$ is a $\Gamma$-group of level $\mathcal{C}_\ell$, such that 
    \begin{enumerate}
        \item There are nonnegative functions $g_n(H,\ell)$ so that 
    \begin{equation}f_n(H,\ell)=g_n(H,\ell)P_{n,\underline{\Gamma}}(U_{\mathcal{C}_\ell,H})\:\forall n,\ell,H,\end{equation}
    \item For every choice of $H,\ell$, the limit $g(H,\ell)=\displaystyle\lim_{n\rightarrow \infty}g_n(H,\ell)$ exists, and $g_n(H,\ell)\leq g(H,\ell)$ for all $n\geq 1$,
    \item For any choice of $\widetilde{H}$, a $\Gamma$-group of level $\mathcal{C}_{\ell-1}$, we have
    \begin{equation}\sum_{\substack{H\in\overline{\mathcal{C}_\ell}\\ H^{\mathcal{C}_{\ell-1}}\cong_{\Gamma} \widetilde{H}}}f_n(H,\ell)=f_n(\widetilde{H},\ell-1).\end{equation}
    \end{enumerate}
    Then $\displaystyle\lim_{n\rightarrow \infty}f_n(H,\ell)$ exists, and 
    \begin{equation}\sum_{H\in\overline{\mathcal{C}_\ell}}\lim_{n\rightarrow \infty}f_n(H,\ell)=\lim_{n\rightarrow \infty}f_n(1,1).\end{equation}
 \end{lemma}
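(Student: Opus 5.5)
We follow the proof of Lemma $5.10$ of \cite{lwzb}, using \Cref{cor: probability lower bound} in place of Corollary $5.9$ there. The argument is an induction on $\ell$. The inductive claim is that for every $H$ of level $\mathcal{C}_\ell$ the limit $f(H,\ell):=\lim_{n\to\infty} f_n(H,\ell)$ exists, and that
\[\sum_{\substack{H\in\overline{\mathcal{C}_\ell}\\ H^{\mathcal{C}_{\ell-1}}\cong\widetilde{H}}} f(H,\ell)=f(\widetilde{H},\ell-1)\]
for every $\widetilde{H}$ of level $\mathcal{C}_{\ell-1}$. Summing over $\widetilde{H}$ and iterating down to level $\mathcal{C}_1$, where only the trivial group occurs, then gives the displayed identity $\sum_{H} \lim_n f_n(H,\ell)=\lim_n f_n(1,1)$.

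\textbf{Base case.} For $\ell=1$ the only group is $H=1$, and $f_n(1,1)=g_n(1,1)P_{n,\underline{\Gamma}}(U_{\mathcal{C}_1,1})$. The factor $g_n$ converges by (2). The factor $P_{n,\underline{\Gamma}}$ converges by \Cref{thm: H-probability}: it equals $\Prob(X^{\mathcal{C}}_{n,\underline{\Gamma}}\cong H)$ divided by the prefactor in that theorem, and that prefactor has a limit.

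\textbf{Inductive step.} Fix $\widetilde{H}$ and consider the $H$ with $H^{\mathcal{C}_{\ell-1}}\cong \widetilde{H}$. For each such $H$, $f_n(H,\ell)$ converges by the same reasoning: $g_n$ converges, and $P_{n,\underline{\Gamma}}(U_{\mathcal{C}_\ell,H})$ converges by the explicit formula in \Cref{thm: H-probability}. It remains to interchange the limit with the sum in (3). The sum may be infinite, although $\overline{\mathcal{C}_\ell}$ is closed only under subquotients and products, so the relevant $H$ are constrained.

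\begin{enumerate}
\item By Fatou's lemma and (3), $\sum_H f(H,\ell)\le \liminf_n f_n(\widetilde{H},\ell-1)=f(\widetilde{H},\ell-1)$, the last equality by the inductive hypothesis.
\item For the reverse inequality we need uniform control of the tail. \Cref{cor: probability lower bound} (for $n$ large) gives a dichotomy: either $P_{n,\underline{\Gamma}}(U_{\mathcal{C}_\ell,H})\ge c(\ell,\widetilde{H})$ or it vanishes. Hence, for every $n$,
\[g_n(H,\ell)\;\le\; \frac{f_n(H,\ell)}{c}\;\le\; \frac{f_n(\widetilde{H},\ell-1)}{c}\]
whenever $P_{n,\underline{\Gamma}}\neq 0$. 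Thus the $g_n$ are bounded on the relevant set of $H$.
\item To dominate, use $f_n(H,\ell)\le g(H,\ell)P_{n,\underline{\Gamma}}(U_{\mathcal{C}_\ell,H})$, which holds by (2). Compare with $g(H,\ell)\lim_n P_{n,\underline{\Gamma}}$.
\end{enumerate}

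Following \cite{lwzb}, the key observation is that $P_{n,\underline{\Gamma}}(U_{\mathcal{C}_\ell,H})$ is nondecreasing in $n$ (or eventually stabilizes), at least once $n$ is large. Indeed, each factor $1-\frac{h^j|G^\Gamma|^{n+1}}{|G|^n\prod|G^{\Gamma_i}|}$ increases with $n$, while the multiplicities $m(\mathcal{C}_\ell,n,H,G)$ grow by the same amount as the exponent. This should make
\[\sum_H g(H,\ell)\,P(U_{\mathcal{C}_\ell,H})\]
the increasing limit of quantities dominating $f_n$.

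\textbf{Main obstacle.} The hardest step is exactly this interchange of limit and sum, i.e.\ showing that no mass escapes to infinitely many $H$. We first try to repeat verbatim the argument of Lemma $5.10$ of \cite{lwzb}. In that argument the uniform lower bound $c(\ell,\widetilde{H})$ forces all but finitely many $H$ to have $P_{n,\underline{\Gamma}}=0$ for all $n$. The reason is that the total sum $\sum_H P_{n,\underline{\Gamma}}(U_{\mathcal{C}_\ell,H})\,g_n(H,\ell)$ is bounded by $f_n(\widetilde{H},\ell-1)$. Alternatively, finiteness can come from the finiteness of chief factor pairs $\mathcal{C}\mathcal{F}_\Gamma(\overline{\mathcal{C}_\ell})$ (Corollary $5.3$ of \cite{lwzb}). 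Once only finitely many $H$ contribute, the limit passes through the finite sum in (3), which completes the induction.
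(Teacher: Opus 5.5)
Your framework matches what the paper intends: induction on $\ell$, Fatou for ``$\le$'', and \Cref{cor: probability lower bound} to stop mass from escaping. The gap is the decisive step, interchanging $\lim_n$ with the possibly infinite sum in (3). You do not establish it, and both justifications you give are false.

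First, $P_{n,\underline{\Gamma}}(U_{\mathcal{C}_\ell,H})$ is not nondecreasing in $n$, because for an abelian chief factor the number of factors in the product grows with $n$. Take $\Gamma=C_2$, $u=0$, $\Gamma_1=\Gamma$, $\ell=3$, $H=1$. The only relevant $G$ is $\mathbb{Z}/3$ with $\Gamma$ acting by inversion, and $m(\mathcal{C}_3,n,1,G)=n$. Then $P_{n,\underline{\Gamma}}(U_{\mathcal{C}_3,1})=\prod_{k=1}^{n}(1-3^{-k})$, which strictly decreases.

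Second, it is not true that only finitely many $H$ contribute. In the same example $\mathcal{C}_2$ contains only the trivial group, so every $H=(\mathbb{Z}/3)^k$ (with inversion) lies over $\widetilde H=1$. Each such $H$ has positive limiting mass $\mu_{\underline{\Gamma}}(U_{\mathcal{C}_3,H})$, namely the Cohen--Lenstra probability that the $3$-rank is $k$. The bound $c(\ell,\widetilde H)$ combined with (3) controls a sum of the $g_n(H,\ell)$, not the number of $H$, since $g$ need not be bounded below; in the applications $g(H,\ell)\le 1/|\Aut_\Gamma(H)|$. Likewise, finiteness of $\mathcal{C}\mathcal{F}_\Gamma(\overline{\mathcal{C}_\ell})$ bounds the types of chief factors, not their multiplicities.

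The missing idea is to turn the lower bound into a summable majorant. Write $P(H)=\lim_n P_{n,\underline{\Gamma}}(U_{\mathcal{C}_\ell,H})$, which exists by \Cref{thm: H-probability}. Each $P_{n,\underline{\Gamma}}(U_{\mathcal{C}_\ell,H})$ lies in $\{0\}\cup[c,1]$, so either $P(H)=0$ or $P(H)\ge c$. Fatou then gives
\[c\sum_{H:\,P(H)\neq 0}g(H,\ell)\le\sum_{H}g(H,\ell)P(H)=\sum_H\lim_{n\to\infty}f_n(H,\ell)\le\lim_{n\to\infty}f_n(\widetilde H,\ell-1)<\infty.\]
Next, $0\le f_n(H,\ell)\le g(H,\ell)P_{n,\underline{\Gamma}}(U_{\mathcal{C}_\ell,H})\le g(H,\ell)\,1_{P(H)\neq0}$. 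This uses (2), $P_{n,\underline{\Gamma}}\le 1$, and one more fact: an $H$ with $P(H)=0$ never occurs as $X_{n,\underline{\Gamma}}^{\mathcal{C}_\ell}$ (\Cref{rmk: probability 0}). So in both applications, \Cref{thm: mu countably additive} and \Cref{thm: H-moment}, such an $H$ has $f_n(H,\ell)=0$ for every $n$. Dominated convergence then yields $\sum_H\lim_n f_n(H,\ell)=\lim_n f_n(\widetilde H,\ell-1)$, which is the inductive step. Your step (2) already contains the relevant inequality $c\,g_n(H,\ell)\le f_n(H,\ell)$. The point is to sum it over $H$ and pass to $g$, rather than read it as a pointwise bound.
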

 The proof is essentially the same as that in \cite{lwzb}, where we use that $\displaystyle\lim_{n\rightarrow\infty}P_{n,\underline{\Gamma}}(U_{\mathcal{C}_\ell,H})$ exists by \Cref{thm: H-probability}, replace any instance of $P_{u,n}(U_{\mathcal{C}_\ell,H})$ with $P_{n,\underline{\Gamma}}(U_{\mathcal{C}_\ell,H})$ in the proof of Lemma $5.10$ in \cite{lwzb}, and we use \Cref{cor: probability lower bound} instead of Corollary $5.9$ of \cite{lwzb}. 

 As in Theorem $5.12$ of \cite{lwzb}, we now obtain that $\mu_{\underline{\Gamma}}$ is countably additive on $\mathcal{A}$ using \Cref{lem: switch limits}. 

 \begin{theorem}\label{thm: mu countably additive}
    $\mu_{\underline{\Gamma}}$ is countably additive on the algebra $\mathcal{A}$ defined in \Cref{def: mu}.
 \end{theorem}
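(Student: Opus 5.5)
The argument follows the proof of Theorem $5.12$ of \cite{lwzb} almost verbatim, with $P_{u,n}$ replaced by $P_{n,\underline{\Gamma}}$ and Lemma $5.10$ of \cite{lwzb} replaced by \Cref{lem: switch limits}. First, finite additivity of $\mu_{\underline{\Gamma}}$ on $\mathcal{A}$ is immediate: each $\mu_{n,\underline{\Gamma}}$ is a probability measure, so it is finitely additive, and $\mu_{\underline{\Gamma}}$ is a pointwise limit on $\mathcal{A}$. That limit exists by \Cref{thm: H-probability}, because every set in $\mathcal{A}$ is a finite disjoint union of basic opens $U_{\mathcal{C},H}$. Countable additivity therefore reduces to showing $\mu_{\underline{\Gamma}}(\mathcal{P})=1$ and that no mass escapes. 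Concretely, for every $\ell$ we need
\[\sum_{H}\mu_{\underline{\Gamma}}(U_{\mathcal{C}_\ell,H})=1,\]
where the sum is over isomorphism classes of admissible $H$ of level $\mathcal{C}_\ell$. Indeed, the sets $U_{\mathcal{C}_\ell,H}$ for fixed $\ell$ partition $\mathcal{P}$. Any $U_{\mathcal{C},H}$ is a finite union of such sets for $\ell$ large, since $\mathcal{C}\subset\overline{\mathcal{C}_\ell}$. The standard argument (Carathéodory-type, as in \cite{lwzb}) then uses compactness-free reasoning: a countable disjoint union in $\mathcal{A}$ can be refined along the filtration $\mathcal{C}_\ell$, and tightness of the total mass at each level gives countable additivity.

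To prove the total mass identity, apply \Cref{lem: switch limits} with
\[f_n(H,\ell):=\mu_{n,\underline{\Gamma}}(U_{\mathcal{C}_\ell,H}),\qquad g_n(H,\ell):=\frac{|\Sur_{\Gamma}(\mathcal{F}_n, H)||H^\Gamma|^{n+1}}{|\Aut_\Gamma(H)||H|^n\prod_{i=1}^{u+1}|H^{\Gamma_i}|}.\]
The factorization $f_n=g_nP_{n,\underline{\Gamma}}$ in hypothesis (1) is exactly the first formula of \Cref{thm: H-probability}.

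Hypothesis (3) is additivity of $\mu_{n,\underline{\Gamma}}$: $U_{\mathcal{C}_{\ell-1},\widetilde H}$ is the disjoint union of the $U_{\mathcal{C}_\ell,H}$ with $H^{\mathcal{C}_{\ell-1}}\cong\widetilde H$. Only finitely many such $H$ exist, since $\mathcal{F}_n^{\mathcal{C}_\ell}$-quotients have bounded size for fixed $\widetilde H$ by finiteness of $X^{\mathcal{C}}$ for $X\in\mathcal{P}$. Moreover, nonadmissible $H$ contribute zero because $X_{n,\underline{\Gamma}}$ is admissible.

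Hypothesis (2) is the monotone convergence of $g_n$. Writing $|H|^n/|H^\Gamma|^n=|Y(H)|^n$, we have
\[g_n(H,\ell)=\frac{|\Sur_\Gamma(\mathcal{F}_n,H)|}{|Y(H)|^n}\cdot\frac{|H^\Gamma|}{|\Aut_\Gamma(H)|\prod_i|H^{\Gamma_i}|}.\]
The first factor is the probability that $n$ random images generate $H$ admissibly. By \cite{lwzb} $(4.18)$ it increases to $1$, and it is at most $1$ since $\mathcal{F}_n\to H$ maps are determined by images $Y$-coordinates, i.e.\ $|\Hom_\Gamma(\mathcal{F}_n,H)|=|Y(H)|^n$. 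So $g(H,\ell)$ is the expected limit and $g_n\le g$.

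With all three hypotheses in place, the lemma yields $\sum_H\lim_n f_n(H,\ell)=\lim_n f_n(1,1)$. Since $f_n(1,1)=\mu_{n,\underline{\Gamma}}(\mathcal{P})=1$, the total mass at each level is $1$. The only genuinely delicate input is \Cref{cor: probability lower bound}, which controls $P_{n,\underline{\Gamma}}$ uniformly and is what makes \Cref{lem: switch limits} work; it is already established. The remaining step I expect to need care is the formal passage from "total mass $1$ at every level $\ell$" to countable additivity on $\mathcal{A}$. For this I would copy the argument of Theorem $5.12$ of \cite{lwzb}. It shows that if $A=\bigsqcup_k A_k$ in $\mathcal{A}$, then for each $\ell$ the basic opens of level $\ell$ contained in $A$ but meeting no $A_k$ carry measure zero. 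That in turn uses that each $\{X: X^{\mathcal{C}_\ell}\cong H\}$ with positive $\mu_{\underline{\Gamma}}$-mass is fully distributed among its level-$(\ell+1)$ refinements.
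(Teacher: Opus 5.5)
Your first step is the paper's. You use the same $f_n(H,\ell)=\mu_{n,\underline{\Gamma}}(U_{\mathcal{C}_\ell,H})$ and the same $g_n$ (since $|H|^n/|H^\Gamma|^n=|Y(H)|^n$ and $|H^{\Gamma_1}|/|H^\Gamma|=|Y(H^{\Gamma_1})|$). Your checks of the hypotheses of \Cref{lem: switch limits} also match, giving $\sum_H\mu_{\underline{\Gamma}}(U_{\mathcal{C}_\ell,H})=1$ for every $\ell$.

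The gap is in passing from this to countable additivity. Your reduction assumes each $U_{\mathcal{C},H}$ is a \emph{finite} union of level-$\mathcal{C}_\ell$ atoms, and that sets in $\mathcal{A}$ are finite unions of basic opens. Both are false. Take $\Gamma=\mathbb{Z}/2$ acting on $\mathbb{Z}/3$ by inversion. Then $\mathcal{P}=U_{\mathcal{C}_1,1}$ contains the distinct atoms $U_{\mathcal{C}_\ell,(\mathbb{Z}/3)^k}$ for every $k$ and every $\ell\ge 3$. Likewise $\mathcal{P}\setminus U_{\mathcal{C}_3,1}\in\mathcal{A}$ is not a finite union of basic opens. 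So even one basic open splits into countably many atoms. The step you skip is exactly the one the paper writes out: countable additivity at a fixed level. Suppose $B\in\mathcal{A}$ is a disjoint union of atoms $U_{\mathcal{C}_\ell,H_j}$. Finite additivity gives $\sum_{j\le M}\mu_{\underline{\Gamma}}(U_{\mathcal{C}_\ell,H_j})\le\mu_{\underline{\Gamma}}(B)$. Enumerating the remaining atoms $U_{\mathcal{C}_\ell,G_j}$ gives $\mu_{\underline{\Gamma}}(B)\le 1-\sum_{j\le M}\mu_{\underline{\Gamma}}(U_{\mathcal{C}_\ell,G_j})$. Letting $M\to\infty$ and using total mass one forces $\mu_{\underline{\Gamma}}(B)=\sum_j\mu_{\underline{\Gamma}}(U_{\mathcal{C}_\ell,H_j})$. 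This is where total mass one is actually used.

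Your description of the final step is also not right. Since $A=\bigsqcup_k A_k$, every nonempty subset of $A$ meets some $A_k$. So your claim that level-$\ell$ basic opens inside $A$ meeting no $A_k$ have measure zero is vacuous. What must be shown is $\mu_{\underline{\Gamma}}(A\setminus\bigcup_{k\le m}A_k)\to 0$. Consistency (each atom's mass is the sum of its refinements' masses) plus levelwise countable additivity does not give this by itself. For example, it fails for the uniform measure on cylinders of binary sequences restricted to the countable set of eventually-zero sequences.

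One must use a property of the space $\mathcal{P}$ itself. Any sequence of admissible level-$\mathcal{C}_\ell$ groups with $H_\ell^{\mathcal{C}_{\ell-1}}\cong H_{\ell-1}$ is realized by $X=\varprojlim H_\ell\in\mathcal{P}$ with $X^{\mathcal{C}_\ell}\cong H_\ell$. Given this, one of the following produces a point of $\bigcap_m\bigl(A\setminus\bigcup_{k\le m}A_k\bigr)$ whenever these masses stay bounded below, which is a contradiction:
\begin{itemize}
\item a K\"onig's-lemma argument, using levelwise tightness to restrict to finitely many atoms per level;
\item an Ionescu--Tulcea-type argument.
\end{itemize}
The paper also defers this last step, to Theorem 9.1 of \cite{LW}, so citing it is fine. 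But the mechanism you attribute to it would not close the argument without this realization property.
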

 \begin{proof}
    We proceed in the exact same way as the proof of Theorem $5.12$ of \cite{lwzb} by setting \[f_n(H,\ell):=\mu_{n,\underline{\Gamma}}(U_{\mathcal{C}_\ell,H}), \quad g_n(H)=\frac{|\Sur_\Gamma(\mathcal{F}_n,H)|}{|\Aut_\Gamma(H)||Y(H)|^n|Y(H^{\Gamma_1})|\prod_{i=2}^{u+1}|H^{\Gamma_i}|},\] 
    which are all nonnegative functions. We also have 
    \[g_n(H)\leq \frac{1}{|\Aut_\Gamma(H)||Y(H^{\Gamma_1})|\prod_{i=2}^{u+1}|H^{\Gamma_i}|},\text{ and }\lim_{n\rightarrow \infty}g_n(H)=\frac{1}{|\Aut_\Gamma(H)||Y(H^{\Gamma_1})|\prod_{i=2}^{u+1}|H^{\Gamma_i}|}.\] 
    
    Finally, since $\mu_{n,\underline{\Gamma}}(U_{\mathcal{C}_\ell,H})=\Prob (X_{n,\underline{\Gamma}}\in U_{\mathcal{C}_\ell,H})$, the third condition of the above lemma is satisfied for our choice of $f_n$ by countable additivity of the probability measure on $\mathcal{F}_n^n\times\mathcal{F}_n^{\Gamma_1}\times\cdots\times\mathcal{F}_n^{\Gamma_{u+1}}$. Thus, by \Cref{lem: switch limits},
    \begin{equation}\label{eq: mass does not escape}
    \sum_{\substack{H\in\overline{\mathcal{C}_\ell}}}\mu_{\underline{\Gamma}}(U_{\mathcal{C}_\ell,H})=\sum_{\substack{H\in\overline{\mathcal{C}_\ell}}}\lim_{n\rightarrow \infty}\mu_{n,\underline{\Gamma}}(U_{\mathcal{C}_\ell,H})=\lim_{n\rightarrow \infty}\mu_{n,\underline{\Gamma}}(U_{\mathcal{C}_1,1})=1.\end{equation}
    
    We now use this result to proceed in a similar way to Corollary $9.7$ of \cite{LW}. Namely, fix $\ell$ and suppose that $B=\bigsqcup_{j}U_{\mathcal{C}_\ell,H_j}$ where $j$ ranges over an at most countable indexing set, $H_j$ level $\mathcal{C}_\ell$, and for $j\neq j'$, $U_{\mathcal{C}_\ell,H_j}\neq U_{\mathcal{C}_\ell,H_{j'}}$, forcing the two basic open sets to be disjoint. Also assume that $B\in\mathcal{A}$. We want to show that $\mu_{\underline{\Gamma}}(B)=\sum_{j}\mu_{\underline{\Gamma}}(U_{\mathcal{C}_\ell,H_j}).$ The set function $\mu_{\underline{\Gamma}}$ is finitely additive as $\mu_{n,\underline{\Gamma}}$ is finitely additive. Finite additivity is enough to show that for $\bigsqcup_{j=1}^MU_{\mathcal{C}_\ell,H_j}\subset B$, we have $\sum_{j=1}^M\mu_{\underline{\Gamma}}(U_{\mathcal{C}_\ell,H_j})=\mu_{\underline{\Gamma}}(\bigsqcup_{j=1}^MU_{\mathcal{C}_\ell,H_j})\leq\mu_{\underline{\Gamma}}(B).$ Since there are at most countably many $G_j$ of level $\mathcal{C}_\ell$ not among the $H_j$, we can enumerate them, and $B=\mathcal{P}-\bigsqcup_{j=1}^\infty U_{\mathcal{C}_\ell,G_j}\subset \mathcal{P}-\bigsqcup_{j=1}^M U_{\mathcal{C}_\ell,G_j}$. Thus, $\mu_{\underline{\Gamma}}(B)\leq \mu_{\underline{\Gamma}}(\mathcal{P}-\bigsqcup_{j=1}^M U_{\mathcal{C}_\ell,G_j})=1-\sum_{j=1}^M\mu_{\underline{\Gamma}}(U_{\mathcal{C}_\ell,G_j}).$ Taking $M\rightarrow\infty$ and using \eqref{eq: mass does not escape}, we get 
    \[\sum_{j=1}^\infty\mu_{\underline{\Gamma}}(U_{\mathcal{C}_\ell,H_j})\leq\mu_{\underline{\Gamma}}(B)\leq 1-\sum_{j=1}^\infty\mu_{\underline{\Gamma}}(U_{\mathcal{C}_\ell,G_j})=\sum_{j=1}^\infty\mu_{\underline{\Gamma}}(U_{\mathcal{C}_\ell,H_j})\]
    which is the desired claim. 

    We recall that our goal is to show that $\mu_{\underline{\Gamma}}$ is countably additive on the algebra $\mathcal{A}$. We proceed in the same way as the proof of Theorem $9.1$ of \cite{LW}, except we replace every instance of $S_\ell$ with $\mathcal{C}_\ell$. 
 \end{proof}

 Thus, by the same argument as in Section $5.3$ of \cite{lwzb} we get that $\mu_{\underline{\Gamma}}$ can be uniquely extended to a Borel measure on $\mathcal{P}$. Every open set $U\subset\mathcal{P}$ is given by a countable disjoint union of basic open sets of the form $U_{\mathcal{C}_\ell,H_{j,\ell}}$, where $H_{j,\ell}$ is a level $\mathcal{C}_\ell$ finite admissible $|\Gamma|'$-$\Gamma$-group for some $j\in J_\ell$, an indexing set depending on $\ell$. Thus,
 \begin{align*}\mu_{\underline{\Gamma}}(U)=\mu_{\underline{\Gamma}}(\bigsqcup_{j,\ell}U_{\mathcal{C}_\ell,H_{j,\ell}})&=\sum_{j,\ell}\mu_{\underline{\Gamma}}(U_{\mathcal{C}_\ell,H_{j,\ell}})\\
 &=\sum_{j,\ell}\lim_{n\rightarrow\infty}\mu_{n,\underline{\Gamma}}(U_{\mathcal{C}_\ell,H_{j,\ell}})\\
 &\leq \liminf_{n\rightarrow\infty}\sum_{j,\ell}\mu_{n,\underline{\Gamma}}(U_{\mathcal{C}_\ell,H_{j,\ell}})\\
 &=\liminf_{n\rightarrow\infty}\mu_{n,\underline{\Gamma}}(U).\end{align*}
 
 Together with the metrizability of $\mathcal{P}$, we obtain by the Portmanteau theorem that the sequence of positive probability measures $\mu_{n,\underline{\Gamma}}$ converge weakly to the Borel measure extending $\mu_{\underline{\Gamma}}$ at basic opens $U_{\mathcal{C}_\ell,H}$. Note that the indicator function of $U_{\mathcal{C},H}$ is continuous for $\mathcal{C}$ finite. To reiterate, as long as $\mu_{\underline{\Gamma}}(U_{\mathcal{C},H})\neq 0$, we have 
 \begin{eqnarray}\mu_{\underline{\Gamma}}(U_{\mathcal{C},H})&=&\frac{1}{|\Aut_\Gamma(H)||Y(H^{\Gamma_1})|\prod_{i=2}^{u+1}|H^{\Gamma_i}|}\prod_{G\in\mathcal{A}_H}\prod_{j=1}^{\infty}\left(1-\lambda(\mathcal{C},H,G)\frac{h_{H\rtimes\Gamma}(G)^{-j}}{|Y(G^{\Gamma_1})|\prod_{i=2}^{u+1}|G^{\Gamma_i}|}\right)\notag\\
 &&\cdot\prod_{G\in\mathcal{N}}e^{-\frac{\lambda(\mathcal{C},H,G)}{|Y(G^{\Gamma_1})|\prod_{i=2}^{u+1}|G^{\Gamma_i}|}}.\end{eqnarray}

 We also want to compute the measure of $U_{\mathcal{C},H}$ when $\mathcal{C}$ is not necessarily finite, where we still keep the notation that $U_{\mathcal{C},H}=\{X\in\mathcal{P}\:|\: X^{\mathcal{C}}\cong_\Gamma H\}$. To emphasize that $U_{\mathcal{C},H}$ is not necessarily open, we write $V_{\mathcal{C},H}=U_{\mathcal{C},H}$ instead. For $\mathcal{C}_i$ an increasing sequence of finite sets of $|\Gamma|'$-$\Gamma$-groups so that $\bigcup_i\mathcal{C}_i=\mathcal{C}$, and $H$ level $\mathcal{C}$, we have $\displaystyle H\cong\varprojlim_{i}H^{\mathcal{C}_i}$ as $\Gamma$-groups. This gives 
 \[V_{\mathcal{C},H}=\bigcap_{i}U_{\mathcal{C}_i,H^{\mathcal{C}_i}}.\]
 Note that $U_{\mathcal{C}_i,H^{\mathcal{C}_i}}\subset U_{\mathcal{C}_{i-1},H^{\mathcal{C}_{i-1}}}$ for all $i$. Let $\lambda(\mathcal{C},H,G)$ (resp. $m(\mathcal{C},n,H,G)$) be defined as in Definition $5.14$ of \cite{lwzb} for possibly infinite $\mathcal{C}$ as the limit $\displaystyle\lim_{i\rightarrow \infty}\lambda(\mathcal{C}_i,H,G)$ (resp. $\displaystyle \lim_{i\rightarrow \infty}m(\mathcal{C}_i,n,H,G)$). For each $H\in\overline{\mathcal{C}}$, there is $i$ so that $H\in\overline{\mathcal{C}_i}$, and every $(E,\pi)\in\mathcal{E}_\mathcal{C}(H,G)$ is in $\mathcal{E}_{\mathcal{C}_i}(H,G)$ for some $i$. Therefore,  
 \begin{equation}\label{eq: lambda}\lambda(\mathcal{C},H,G)=\begin{cases}(h_{H\rtimes\Gamma}(G)-1)\displaystyle\sum_{(E,\pi)\in\mathcal{E}_\mathcal{C}(H,G)}\frac{1}{|\Aut_{\Gamma,H}(E,\pi)|} & \mbox{if } G\in\mathcal{A}_H\\ \displaystyle\sum_{(E,\pi)\in\mathcal{E}_\mathcal{C}(H,G)}\frac{1}{|\Aut_{\Gamma,H}(E,\pi)|} & \mbox{if } G\in\mathcal{N}.\end{cases}
 \end{equation}
 
 Thus, $\lambda(\mathcal{C},H,G)$ is independent of the limiting sequence $\mathcal{C}_i$ we take. Similarly, we can use Corollary $4.8$ of \cite{lwzb} to deduce that $m(\mathcal{C},n,H,G)$ is independent of the limiting sequence $\mathcal{C}_i$. 

 The following analogue of Theorem $5.15$ of \cite{lwzb} holds:
 \begin{theorem}\label{thm: measure closed set}
    Let $\mathcal{C}$ be an arbitrary set of $|\Gamma|'$-$\Gamma$-groups. Let $H$ be a level $\mathcal{C}$ finite group. Then either $\mu_{\underline{\Gamma}}(V_{\mathcal{C},H})=0$, or 
    \begin{eqnarray}\mu_{\underline{\Gamma}}(V_{\mathcal{C},H})&=&\frac{1}{|\Aut_\Gamma(H)||Y(H^{\Gamma_1})|\prod_{i=2}^{u+1}|H^{\Gamma_i}|}\prod_{G\in\mathcal{A}_H}\prod_{j=1}^{\infty}\left(1-\lambda(\mathcal{C},H,G)\frac{h_{H\rtimes\Gamma}(G)^{-j}}{|Y(G^{\Gamma_1})|\prod_{i=2}^{u+1}|G^{\Gamma_i}|}\right)\notag\\
 &&\cdot\prod_{G\in\mathcal{N}}e^{-\frac{\lambda(\mathcal{C},H,G)}{|Y(G^{\Gamma_1})|\prod_{i=2}^{u+1}|G^{\Gamma_i}|}}.\end{eqnarray}
 \end{theorem}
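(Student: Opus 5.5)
We follow the proof of Theorem $5.15$ of \cite{lwzb}. Fix an increasing sequence of finite sets $\mathcal{C}_i$ with $\bigcup_i\mathcal{C}_i=\mathcal{C}$ and $H\in\overline{\mathcal{C}_1}$. As noted above,
\[V_{\mathcal{C},H}=\bigcap_i U_{\mathcal{C}_i,H^{\mathcal{C}_i}}=\bigcap_i U_{\mathcal{C}_i,H},\]
and this is a decreasing intersection. Since $\mu_{\underline{\Gamma}}$ is a countably additive Borel probability measure (\Cref{thm: mu countably additive} and its extension), continuity from above gives
\[\mu_{\underline{\Gamma}}(V_{\mathcal{C},H})=\lim_{i\rightarrow\infty}\mu_{\underline{\Gamma}}(U_{\mathcal{C}_i,H}).\]
The task is therefore to pass to the limit $i\rightarrow\infty$ in the explicit formula for $\mu_{\underline{\Gamma}}(U_{\mathcal{C}_i,H})$ obtained from \Cref{thm: H-probability}.

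\textbf{Step 1: the dichotomy.} By \Cref{rmk: probability 0}, $\mu_{\underline{\Gamma}}(U_{\mathcal{C}_i,H})=0$ exactly when some $G\in\mathcal{A}_H\cup\mathcal{N}$ with $G=G^\Gamma$ has $m(\mathcal{C}_i,n,H,G)>0$ for some $n$. The multiplicities $m(\mathcal{C}_i,n,H,G)$ are nondecreasing in $i$, by Corollary $4.8$ of \cite{lwzb}. Hence once this vanishing occurs for some $i$, it persists for all larger $i$, and then $\mu_{\underline{\Gamma}}(V_{\mathcal{C},H})=0$. Otherwise every $U_{\mathcal{C}_i,H}$ has measure given by the displayed nonzero formula. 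The prefactor $1/(|\Aut_\Gamma(H)||Y(H^{\Gamma_1})|\prod_{i\ge2}|H^{\Gamma_i}|)$ does not depend on $i$.

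\textbf{Step 2: the product factors.} The sets $\mathcal{E}_{\mathcal{C}_i}(H,G)$ increase to $\mathcal{E}_{\mathcal{C}}(H,G)$, so \eqref{eq: lambda} shows that $\lambda(\mathcal{C}_i,H,G)$ increases monotonically to $\lambda(\mathcal{C},H,G)$ for each $G$. Every factor
\[1-\lambda\,h^{-j}/(|Y(G^{\Gamma_1})|\prod|G^{\Gamma_i}|)\quad\text{and}\quad e^{-\lambda/(\cdots)}\]
is nonincreasing in $\lambda$ and lies in $[0,1]$. We are in the nonvanishing case, so for the abelian factors the bound $\lambda(\mathcal{C}_i,H,G)\,h^{-1}/(\cdots)\le 1$ holds, as in \cite{lwzb}. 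Each factor therefore converges to the corresponding factor with $\lambda(\mathcal{C},H,G)$.

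\textbf{Step 3: the main obstacle.} The difficulty is exchanging $\lim_i$ with the infinite products over $G\in\mathcal{A}_H\cup\mathcal{N}$ (and over $j$). The approach is two inequalities.
\begin{itemize}
\item[(a)] Since all factors lie in $[0,1]$ and decrease in $i$, the limit is bounded above by every finite subproduct of the limiting factors. Taking the infimum over finite subproducts gives $\lim_i\le$ the claimed formula.
\item[(b)] For the reverse inequality, monotone decrease again gives $\mu_{\underline{\Gamma}}(U_{\mathcal{C}_i,H})\ge$ the full product evaluated at $\lambda(\mathcal{C},H,G)$, because each factor at level $i$ dominates the corresponding limiting factor.
\end{itemize}
Together these give equality. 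If the limiting product is $0$, both sides are $0$, which is consistent with the "either $0$" alternative.

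The remaining point is that only finitely many $G$ have $\lambda(\mathcal{C}_i,H,G)\neq0$ at each finite stage. Infinitely many $G$ may contribute in the limit, and then convergence of the infinite product must be checked. Here we use the bound $\sum_G\lambda(\mathcal{C},H,G)/(\cdots)<\infty$ whenever the measure is positive. For this I would invoke the estimates from the proof of \Cref{cor: probability lower bound} (Lemma $5.4$ of \cite{lwzb}); otherwise the product diverges to $0$, which again falls into the first alternative.
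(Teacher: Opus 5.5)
Your proposal is correct and follows essentially the same route as the paper: continuity from above along the decreasing sets $U_{\mathcal{C}_i,H}$, the vanishing dichotomy via \Cref{rmk: probability 0}, monotone (indeed eventually constant) convergence $\lambda(\mathcal{C}_i,H,G)\to\lambda(\mathcal{C},H,G)$, and an exchange of $\lim_i$ with the product over $G$, justified because the factors lie in $[0,1]$ and are nonincreasing in $i$. Your inequalities (a) and (b) spell out that exchange more carefully than the paper does, and they already cover infinitely many $G$, since an infinite product of factors in $[0,1]$ is the infimum of its finite subproducts; so your closing paragraph about summability is unnecessary.
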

 \begin{proof}
    As stated below Theorem $5.15$ of \cite{lwzb}, this statement follows from the same argument as Lemma $11.3$ of \cite{LW}, with some constants changed and every instance of $T_m$ changed to $\mathcal{C}_m$, and every instance of $S$ changed to $\mathcal{C}$. Indeed, let $\mathcal{C}_m$ be an increasing sequence of finite sets of $|\Gamma|'$-$\Gamma$-groups so that $\bigcup_m\mathcal{C}_m=\mathcal{C}$. From $\mu_{\underline{\Gamma}}$ being a finite Borel measure,
    \[\mu_{\underline{\Gamma}}(V_{\mathcal{C},H})=\lim_{m\rightarrow\infty}\mu_{\underline{\Gamma}}(U_{\mathcal{C}_m,H^{\mathcal{C}_m}})=\lim_{m\rightarrow\infty}\lim_{n\rightarrow\infty}\mu_{n,\underline{\Gamma}}(U_{\mathcal{C}_m,H^{\mathcal{C}_m}}).\]
    
    If $|Y(G)|=1$ for some $G\in\mathcal{A}_H\cup\mathcal{N}$ then $m(\mathcal{C}_m,n,H,G)$ is constant in $n$, and constant in $m$ for $m$ large enough as well. If any $G\in\mathcal{A}_H\cup\mathcal{N}$ satisfies this condition with positive multiplicity, then $\mu_{\underline{\Gamma}}(U_{\mathcal{C}_m,H})=0$ for $m$ large enough, so $\mu_{\underline{\Gamma}}(V_{\mathcal{C},H})=0$. If this never happens for any $G\in\mathcal{A}_H\cup\mathcal{N}$, then the product representation of $\mu_{\underline{\Gamma}}(U_{\mathcal{C}_m,H})$ is valid. Also recall that $\lambda(\mathcal{C}_m,H,G)$ is increasing in $m$ as the summand is equal and positive, and the indexing set $\mathcal{E}_{\mathcal{C}_m}(H,G)$ can only become bigger. 

    Since $H$ is finite, it is of level $\mathcal{C}_m$ for some $m$. For such $m$, 
    \[\lim_{n\rightarrow\infty}\mu_{n,\underline{\Gamma}}(U_{\mathcal{C}_m,H^{\mathcal{C}_m}})=\mu_{\underline{\Gamma}}(U_{\mathcal{C}_m,H}).\]
    Now,
    \begin{equation}\label{eq: product terms}\prod_{j=1}^{\infty}\left(1-\lambda(\mathcal{C}_m,H,G)\frac{h_{H\rtimes\Gamma}(G)^{-j}}{|Y(G^{\Gamma_1})|\prod_{i=2}^{u+1}|G^{\Gamma_i}|}\right) \quad\text{and}\quad e^{-\frac{\lambda(\mathcal{C}_m,H,G)}{|Y(G^{\Gamma_1})|\prod_{i=2}^{u+1}|G^{\Gamma_i}|}}\end{equation}
    lie in the closed interval $[0,1]$ because they are limit of probabilities. If any term in the first product of \eqref{eq: product terms} is negative, there is a term that is equal to $0$ by the proof of \Cref{prop: relation probability}. Therefore, the terms in \eqref{eq: product terms} is nonincreasing in $m$. Using that $\lambda(\mathcal{C}_m,H,G)$ is eventually constant in $m$ and converges to $\lambda(\mathcal{C},H,G)$, we obtain 
    \begin{eqnarray}
    \mu_{\underline{\Gamma}}(U_{\mathcal{C},H})
    &=&\lim_{m\rightarrow\infty}\frac{1}{|\Aut_\Gamma(H)||Y(H^{\Gamma_1})|\prod_{i=2}^{u+1}|H^{\Gamma_i}|}\prod_{G\in\mathcal{A}_H}\prod_{j=1}^{\infty}\left(1-\frac{\lambda(\mathcal{C}_m,H,G)h_{H\rtimes\Gamma}(G)^{-j}}{|Y(G^{\Gamma_1})|\prod_{i=2}^{u+1}|G^{\Gamma_i}|}\right)\nonumber \\&&\cdot\prod_{G\in\mathcal{N}}e^{-\frac{\lambda(\mathcal{C}_m,H,G)}{|Y(G^{\Gamma_1})|\prod_{i=2}^{u+1}|G^{\Gamma_i}|}}\nonumber\\
    &=&\frac{1}{|\Aut_\Gamma(H)||Y(H^{\Gamma_1})|\prod_{i=2}^{u+1}|H^{\Gamma_i}|}\prod_{G\in\mathcal{A}_H}\lim_{m\rightarrow\infty}\prod_{j=1}^{\infty}\left(1-\frac{\lambda(\mathcal{C}_m,H,G)h_{H\rtimes\Gamma}(G)^{-j}}{|Y(G^{\Gamma_1})|\prod_{i=2}^{u+1}|G^{\Gamma_i}|}\right)\nonumber\\&&\cdot\prod_{G\in\mathcal{N}}\lim_{m\rightarrow\infty}e^{-\frac{\lambda(\mathcal{C}_m,H,G)}{|Y(G^{\Gamma_1})|\prod_{i=2}^{u+1}|G^{\Gamma_i}|}}\nonumber\\
    &=&\frac{1}{|\Aut_\Gamma(H)||Y(H^{\Gamma_1})|\prod_{i=2}^{u+1}|H^{\Gamma_i}|}\prod_{G\in\mathcal{A}_H}\prod_{j=1}^{\infty}\left(1-\frac{\lambda(\mathcal{C},H,G)h_{H\rtimes\Gamma}(G)^{-j}}{|Y(G^{\Gamma_1})|\prod_{i=2}^{u+1}|G^{\Gamma_i}|}\right)\nonumber\\&&\cdot\prod_{G\in\mathcal{N}}e^{-\frac{\lambda(\mathcal{C},H,G)}{|Y(G^{\Gamma_1})|\prod_{i=2}^{u+1}|G^{\Gamma_i}|}}\nonumber.
    \end{eqnarray}
    
 \end{proof}

\section{\texorpdfstring{$H$-moments of the random group model}{H-moments of the random group model}}\label{sec: moment}
 In this final section, we compute the $H$-moments of the measure $\mu_{\underline{\Gamma}}$, closely following Section $6$ of \cite{lwzb} and using \Cref{lem: switch limits}. At the end of the section, we use a result of \cite{saw} to state the probability version of \Cref{conj: main} as well. 

 Denote by $\mathbb{E}_{n,\underline{\Gamma}}$ the expected value with respect to $\mu_{n,\underline{\Gamma}}$ and by $\mathbb{E}_{\underline{\Gamma}}$ the expected value with respect to $\mu_{\underline{\Gamma}}$. The idea is to first compute the limit of $\mathbb{E}_{n,\underline{\Gamma}}$, then show this limit agrees with $\mathbb{E}_{\underline{\Gamma}}$ when we compute moments with respect to the different measures. Note that Section $6$ of \cite{lwzb} uses the notation $A$ for our groups $H$ and vice versa for \Cref{thm: H-moment}. 

 \begin{lemma}\label{lem: limit H-moment mun}
    Let $H$ be a finite admissible $|\Gamma|'$-$\Gamma$-group. Then 
    \begin{equation}\lim_{n\rightarrow \infty}\mathbb{E}_{n,\underline{\Gamma}}(|\Sur_{\Gamma}(X,H)|)=\frac{|H^\Gamma|}{\prod_{i=1}^{u+1}|H^{\Gamma_i}|}.\end{equation}
 \end{lemma}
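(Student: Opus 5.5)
The approach is the standard first-moment computation, as in Lemma 6.2 of \cite{lwzb}. We expand the expectation as a sum over surjections from $\mathcal{F}_n$. Any $\Gamma$-equivariant surjection $X_{n,\underline{\Gamma}}=\mathcal{F}_n/[Y(S_1),S_2]\twoheadrightarrow H$ lifts uniquely to a $\Gamma$-surjection $\pi:\mathcal{F}_n\twoheadrightarrow H$ whose kernel contains $[Y(S_1),S_2]$. Conversely, every such $\pi$ descends. Hence
\[\mathbb{E}_{n,\underline{\Gamma}}(|\Sur_\Gamma(X,H)|)=\sum_{\pi\in\Sur_\Gamma(\mathcal{F}_n,H)}\Prob\left([Y(S_1),S_2]\subset\ker\pi\right).\]
Since $\ker\pi$ is $\mathcal{F}_n\rtimes\Gamma$-normal, the containment holds exactly when each coordinate of $Y(x_i)$ for $i\le n+1$ and each $x_j$ for $j\ge n+2$ maps to $1$ in $H$. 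The conditions to impose are therefore $Y(\pi(x_i))=1$, i.e. $\pi(x_i)\in H^\Gamma$, for $i\le n+1$, and $\pi(x_j)=1$ for $j\ge n+2$.

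The second step is to compute each of these probabilities using the pushforward of Haar measure. Because $\pi$ is surjective, the pushforward of Haar measure on $\mathcal{F}_n$ is uniform on $H$. The key point is that the pushforward of Haar measure on $\mathcal{F}_n^{\Gamma_i}$ is uniform on $H^{\Gamma_i}$. This holds because, by the exactness of taking $\Gamma_i$-invariants on $|\Gamma|'$-groups (the fact recorded in \Cref{sec: notation}, applied to the subgroup $\Gamma_i$ in place of $\Gamma$, i.e. via Schur--Zassenhaus/$H^1$-vanishing for the coprime action), $\pi$ restricts to a surjective homomorphism $\mathcal{F}_n^{\Gamma_i}\to H^{\Gamma_i}$ on open quotients. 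Independence then gives the product:
\[\Prob\left([Y(S_1),S_2]\subset\ker\pi\right)=\left(\frac{|H^\Gamma|}{|H|}\right)^n\cdot\frac{|H^{\Gamma}|}{|H^{\Gamma_1}|}\cdot\prod_{i=2}^{u+1}\frac{1}{|H^{\Gamma_i}|}.\]
This agrees with \eqref{eq: probability in fixed kernel}, using $|Y(H^{\Gamma_1})|=|H^{\Gamma_1}|/|H^\Gamma|$. The quantity is independent of $\pi$, so
\[\mathbb{E}_{n,\underline{\Gamma}}(|\Sur_\Gamma(X,H)|)=\frac{|\Sur_\Gamma(\mathcal{F}_n,H)|}{|Y(H)|^n}\cdot\frac{|H^\Gamma|}{\prod_{i=1}^{u+1}|H^{\Gamma_i}|}.\]

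Finally, we take $n\to\infty$ and invoke $(4.18)$ of \cite{lwzb}, already used at the end of the proof of \Cref{thm: H-probability}: for admissible $H$, $\lim_{n\to\infty}|\Sur_\Gamma(\mathcal{F}_n,H)|/|Y(H)|^n=1$. This gives the claimed limit $|H^\Gamma|/\prod_{i}|H^{\Gamma_i}|$. The only step needing care is the surjectivity of $\mathcal{F}_n^{\Gamma_i}\to H^{\Gamma_i}$, together with the fact that Haar measure on the fixed subgroup pushes forward to uniform measure. I would handle this by passing to a finite $\Gamma$-quotient of $\mathcal{F}_n$ through which $\pi$ factors, applying the coprime-order exactness there, and noting that a surjective continuous homomorphism of compact groups pushes Haar measure to Haar measure. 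Everything else is bookkeeping. Note that this lemma only concerns $\mu_{n,\underline{\Gamma}}$; transferring the result to $\mathbb{E}_{\underline{\Gamma}}$ is a separate step, handled via \Cref{lem: switch limits}.
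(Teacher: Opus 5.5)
Your proposal is correct and follows the paper's proof. You write $|\Sur_\Gamma(X,H)|$ as a sum of indicators over $\phi\in\Sur_\Gamma(\mathcal{F}_n,H)$, evaluate each term via the fixed-kernel probability \eqref{eq: probability in fixed kernel}, and finish with $(4.18)$ of \cite{lwzb}; your justification of $\mathcal{F}_n^{\Gamma_i}\twoheadrightarrow H^{\Gamma_i}$ by coprime exactness and pushforward of Haar measure spells out what the paper leaves implicit, and the paper cites Lemma $6.1$ of \cite{lwzb} as the analogue rather than Lemma $6.2$.
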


 \begin{proof}
    This lemma is the direct analogue of Lemma $6.1$ of \cite{lwzb}. Indeed, we have the equality
    \begin{equation}\label{eq: moments of En}\mathbb{E}_{n,\underline{\Gamma}}(|\Sur_\Gamma(X,H)|):=\int_{\mathcal{P}}|\Sur_\Gamma(X,H)|d\mu_{n,\underline{\Gamma}}=\frac{|\Sur_{\Gamma}(\mathcal{F}_n,H)|}{|Y(H)|^n|Y(H^{\Gamma_1})|\prod_{i=2}^{u+1}|H^{\Gamma_i}|}.\end{equation}
    This equality follows from decomposing the integrand into a sum of indicator functions
    \[|\Sur_\Gamma(X,H)|=\sum_{\phi:\mathcal{F}_n\twoheadrightarrow H}1_\phi(X),\]
    where $1_\phi(X)$ is $1$ if $\phi$ factors through $X$, and $0$ otherwise. This leads us to compute the probability that when we pick $(S_1,S_2)$ with respect to the Haar measure on $\mathcal{F}_n^n\times\mathcal{F}_n^{\Gamma_1}\times\cdots\times\mathcal{F}_n^{\Gamma_{u+1}}$ as in \Cref{def: measure mun}, the group $[Y(S_1),S_2]_{\mathcal{F}_n\rtimes\Gamma}$ is contained in the kernel of a fixed surjection $\phi:\mathcal{F}_n\twoheadrightarrow H$, which is \eqref{eq: probability in fixed kernel}.

    Using \eqref{eq: moments of En} we finish by taking $n\rightarrow\infty$ and using $(4.18)$ of \cite{lwzb} again.
 \end{proof}
 We are now ready to compute the $H$-moments of $\mu_{\underline{\Gamma}}$.
 \begin{theorem}\label{thm: H-moment}
    Let $H$ be a finite admissible $|\Gamma|'$-$\Gamma$-group. Then
    \begin{equation}\mathbb{E}_{\underline{\Gamma}}(|\Sur_{\Gamma}(X,H)|)=\frac{|H^\Gamma|}{\prod_{i=1}^{u+1}|H^{\Gamma_i}|}.\end{equation}
 \end{theorem}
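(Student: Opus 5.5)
The plan is to follow Section 6 of \cite{lwzb}: interchange the limit in $n$ with the expectation, using \Cref{lem: switch limits} to show that no mass escapes. Then \Cref{lem: limit H-moment mun} gives the value.

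First we reduce to a finite level. Let $\ell$ be large enough that $H$ has level $\mathcal{C}_\ell$. Every $\Gamma$-surjection $X\twoheadrightarrow H$ factors through $X^{\mathcal{C}_\ell}$, so $|\Sur_\Gamma(X,H)|=|\Sur_\Gamma(X^{\mathcal{C}_\ell},H)|$, and this is constant on each basic open set $U_{\mathcal{C}_\ell,G}$. By countable additivity of $\mu_{\underline{\Gamma}}$ (\Cref{thm: mu countably additive} and its Borel extension) we get
\[\mathbb{E}_{\underline{\Gamma}}(|\Sur_\Gamma(X,H)|)=\sum_{G\in\overline{\mathcal{C}_\ell}}|\Sur_\Gamma(G,H)|\,\mu_{\underline{\Gamma}}(U_{\mathcal{C}_\ell,G}),\]
and the analogous identity holds for each $\mu_{n,\underline{\Gamma}}$. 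Since $\mu_{\underline{\Gamma}}(U_{\mathcal{C}_\ell,G})=\lim_n\mu_{n,\underline{\Gamma}}(U_{\mathcal{C}_\ell,G})$ by \Cref{def: mu} and \Cref{thm: H-probability}, the theorem reduces to exchanging $\sum_G$ with $\lim_n$. The right-hand side of the theorem is then supplied by \Cref{lem: limit H-moment mun}.

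To justify the exchange we apply \Cref{lem: switch limits}, shifting the level so that everything is indexed by $\mathcal{C}_{\ell'}$ for $\ell'\ge\ell$. We set
\[f_n(G,\ell')=\sum_{G'\in\overline{\mathcal{C}_{\ell'}}:\,(G')^{\mathcal{C}_{\ell'}}\cong G}\cdots\]
More simply, we take $f_n(G,\ell')=\mu_{n,\underline{\Gamma}}(U_{\mathcal{C}_{\ell'},G})\cdot|\Sur_\Gamma(G,H)|$ for $\ell'\ge\ell$, and for $\ell'<\ell$ we define it by summing down along the fibres, so that hypothesis (3) holds. Hypothesis (3) holds because surjections to $H$ are determined at level $\mathcal{C}_\ell$ and $\mu_{n,\underline{\Gamma}}$ is additive. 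For hypotheses (1) and (2) we take $g_n(G)=|\Sur_\Gamma(G,H)|\,|\Sur_\Gamma(\mathcal{F}_n,G)|/(|\Aut_\Gamma(G)||Y(G)|^n|Y(G^{\Gamma_1})|\prod_{i\ge2}|G^{\Gamma_i}|)$, as in the proof of \Cref{thm: mu countably additive}. This is monotone bounded by its limit because $|\Sur_\Gamma(\mathcal{F}_n,G)|\le|Y(G)|^n$.

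The step I expect to be the main obstacle is the bookkeeping at levels below $\ell$. There the weight $|\Sur_\Gamma(\cdot,H)|$ is not constant on $U_{\mathcal{C}_{\ell'},\widetilde G}$, so $f_n$ does not factor as $g_n P_{n,\underline{\Gamma}}$ directly. I would handle this as \cite{lwzb} does. Rather than working downward, I would apply the conclusion of \Cref{lem: switch limits} (or its proof) only at levels $\ge\ell$, with the base case $\lim_n f_n$ at level $\ell$ replaced by $\lim_n\mathbb{E}_{n,\underline{\Gamma}}(|\Sur_\Gamma(X,H)|)$. Alternatively, I would rewrite the moment as a sum over surjections $\mathcal{F}_n\twoheadrightarrow H$ and condition on the kernel, and then invoke the same lower bound from \Cref{cor: probability lower bound} to prevent escape of mass.

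With the exchange justified, the result follows:
\[\mathbb{E}_{\underline{\Gamma}}(|\Sur_\Gamma(X,H)|)=\lim_{n\to\infty}\mathbb{E}_{n,\underline{\Gamma}}(|\Sur_\Gamma(X,H)|)=\frac{|H^\Gamma|}{\prod_{i=1}^{u+1}|H^{\Gamma_i}|}.\]
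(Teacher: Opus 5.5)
Your reduction to interchanging $\lim_n$ with $\sum_{G\in\overline{\mathcal{C}_\ell}}$ is correct, and you locate the obstacle correctly, but neither remedy you offer removes it.

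Your first remedy is circular. \Cref{lem: switch limits} gets an interchange at level $\ell$ only by chaining fibrewise interchanges down to level $1$. There $\overline{\mathcal{C}_1}$ is just the trivial group, so the base case is a single number. If you start the induction at level $\ell$, the base case becomes $\sum_{G\in\overline{\mathcal{C}_\ell}}\lim_n f_n(G,\ell)=\lim_n\mathbb{E}_{n,\underline{\Gamma}}(|\Sur_\Gamma(X,H)|)$. That is an interchange over the infinite set $\overline{\mathcal{C}_\ell}$, which is exactly what you are trying to prove. Running the lemma upward from $\ell$ only relates levels $\ell'\ge\ell$ back to level $\ell$.

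Your alternative is only a pointer, and \Cref{cor: probability lower bound} does not by itself stop escape of mass. Inside \Cref{lem: switch limits} it works only through hypotheses (1)--(2), i.e.\ once $f_n=g_nP_{n,\underline{\Gamma}}$ with $g_n\le g$ and $g_n\to g$ is known at \emph{every} level. That factorization below level $\ell$ is precisely what you have not produced.

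The missing ingredient is to define $f_n(A,\ell)=\mathbb{E}_{n,\underline{\Gamma}}(|\Sur_\Gamma(X,H)|\cdot 1_{X^{\mathcal{C}_\ell}\cong A})$ at all levels. Then hypothesis (3) is automatic and $f_n(1,1)$ is the $H$-moment of $\mu_{n,\underline{\Gamma}}$. You must then prove the factorization even when $H$ is not of level $\mathcal{C}_\ell$.

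The paper does this by writing $f_n(A,\ell)$ as a sum over $\phi\in\Sur_\Gamma(\mathcal{F}_n,H)$ and proving \eqref{eq: H-moment key equation}. That equation says: conditioned on the image $(Y(\overline{S_1}),\overline{S_2})=y$ of the relators in $\mathcal{F}_n^{\mathcal{C}_\ell}$, the probability that $[Y(S_1),S_2]\subset\ker\phi$ equals $|Y(H^{\mathcal{C}_\ell})|^n|Y((H^{\mathcal{C}_\ell})^{\Gamma_1})|\prod_{i\ge 2}|(H^{\mathcal{C}_\ell})^{\Gamma_i}|$ divided by $|Y(H)|^n|Y(H^{\Gamma_1})|\prod_{i\ge 2}|H^{\Gamma_i}|$. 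This is independent of $y$, and the proof uses surjectivity of $\ker\phi\to\ker\overline{\phi}$.

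Counting then gives $f_n(A,\ell)=g_n(A,\ell)P_{n,\underline{\Gamma}}(U_{\mathcal{C}_\ell,A})$. Here $g_n$ involves $|\Sur_\Gamma(A,H^{\mathcal{C}_\ell})|$ and the number of lifts of $\overline{\phi}$ to surjections onto $H$. Bounding that number by $|Y(H)|^n/|Y(H^{\mathcal{C}_\ell})|^n$ gives $g_n\le g$ and $g_n\to g$, and \Cref{lem: switch limits} applies with its genuine base case.

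A different way to avoid low levels would be uniform integrability. One has $|\Sur_\Gamma(X,H)|^2\le\sum_K|\Sur_\Gamma(X,K)|$ over $\Gamma$-stable $K\le H\times H$, so \eqref{eq: moments of En} bounds the second moment uniformly in $n$. Together with \eqref{eq: mass does not escape} and Scheff\'e's lemma, this justifies the interchange at level $\ell$ directly. Your proposal contains neither argument.
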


 \begin{proof}
    Once again, this is very similar to Theorem $6.2$ of \cite{lwzb} with slightly more terms. The proof involves several key steps. For a level $\mathcal{C}_\ell$ finite $\Gamma$-group $A$, define $1_{X^{\mathcal{C}_\ell}\cong A}$ to be the indicator function that is $1$ when $X^{\mathcal{C}_\ell}\cong_\Gamma A$, and $0$ otherwise. 
    
    We first set \[f_n(A,\ell):=\mathbb{E}_n(|\Sur_\Gamma(X,H)|\cdot 1_{X^{\mathcal{C}_\ell}\cong A})=\sum_{\phi:\mathcal{F}_n\twoheadrightarrow H}\Prob \left([Y(S_1),S_2]\subset \ker\phi\mbox{ , }\mathcal{F}_n^{\mathcal{C}_\ell}/[Y(\overline{S_1}),\overline{S_2}]\cong A\right)\]
    where the sum ranges over all $\Gamma$-equivariant surjective maps $\mathcal{F}_n\rightarrow H$. The last equality follows from similar considerations as the previous lemma, where again, $(S_1,S_2)$ is a Haar random tuple of elements in $\mathcal{F}_n^n\times\mathcal{F}_n^{\Gamma_1}\times\cdots\times\mathcal{F}_n^{\Gamma_{u+1}}$, and $\overline{S_1},\overline{S_2}$ is their image in $\mathcal{F}_n^{\mathcal{C}_\ell}$ as in the proof of \Cref{thm: H-probability}. Let $\overline{\phi}$ be the induced surjection $\overline{\phi}:\mathcal{F}_n^{\mathcal{C}_\ell}\rightarrow H^{\mathcal{C}_\ell}$. We know that 
    \[\Prob \left([Y(S_1),S_2]\subset \ker\phi\big|[Y(\overline{S_1}),\overline{S_2}]\subset \ker\overline{\phi}\right)=\frac{|Y(H^{\mathcal{C}_\ell})|^n|Y({H^{\mathcal{C}_\ell}}^{\Gamma_1})|\prod_{i=2}^{u+1}|{H^{\mathcal{C}_\ell}}^{\Gamma_i}|}{|Y(H)|^n|Y(H^{\Gamma_1})|\prod_{i=2}^{u+1}|H^{\Gamma_i}|}\]
    by computing each of the probabilities $[Y(S_1),S_2]\subset\ker\phi,[Y(\overline{S_1}),\overline{S_2}]\subset\ker\overline{\phi}$ separately. Note that ${H^{\mathcal{C}_\ell}}^{\Gamma_i}$ is the set of $\Gamma_i$-fixed elements of $H^{\mathcal{C}_\ell}$. 

    Next, fix $y\in Y(\ker\overline\phi)^n\times Y((\ker\overline\phi)^{\Gamma_1})\times (\ker\overline\phi)^{\Gamma_2}\times\cdots\times (\ker\overline\phi)^{\Gamma_{u+1}}$. A key step in the proof of this theorem is to prove the following claim:
    \begin{equation}\label{eq: H-moment key equation}\Prob\left([Y(S_1),S_2]\subset\ker\phi\big|(Y(\overline{S_1}),\overline{S_2})=y\right)=\frac{|Y(H^{\mathcal{C}_\ell})|^n|Y({H^{\mathcal{C}_\ell}}^{\Gamma_1})|\prod_{i=2}^{u+1}|{H^{\mathcal{C}_\ell}}^{\Gamma_i}|}{|Y(H)|^n|Y(H^{\Gamma_1})|\prod_{i=2}^{u+1}|H^{\Gamma_i}|}.\end{equation}
    
    The proof of the claim also follows from basic probability theory of Haar measures on profinite groups and the fact that the natural map $\ker\phi\rightarrow \ker\overline{\phi}$ is surjective (see Lemma $5.1$ of \cite{Liu2}). 

    Third, we have
    \begin{eqnarray}
    &&\Prob \left([Y(S_1),S_2]\subset \ker\phi\big|[Y(\overline{S_1}),\overline{S_2}]\subset \ker\overline{\phi}\mbox{ , }\mathcal{F}_n^{\mathcal{C}_\ell}/[Y(\overline{S_1}),\overline{S_2}]_{\mathcal{F}_n^{\mathcal{C}_\ell}\rtimes\Gamma}\cong_\Gamma A\right)\nonumber \\
    &=&\frac{\Prob \left([Y(S_1),S_2]\subset \ker\phi\mbox{ and } [Y(\overline{S_1}),\overline{S_2}]\subset \ker\overline{\phi}\mbox{, }\mathcal{F}_n^{\mathcal{C}_\ell}/[Y(\overline{S_1}),\overline{S_2}]_{\mathcal{F}_n^{\mathcal{C}_\ell}\rtimes\Gamma}\cong_\Gamma A\right)}{\Prob\left([Y(\overline{S_1}),\overline{S_2}]\subset \ker\overline{\phi}\mbox{ , }\mathcal{F}_n^{\mathcal{C}_\ell}/[Y(\overline{S_1}),\overline{S_2}]_{\mathcal{F}_n^{\mathcal{C}_\ell}\rtimes\Gamma}\cong_\Gamma A\right)}\nonumber\\
    &=&\frac{\displaystyle\sum_y\Prob \left([Y(S_1),S_2]\subset \ker\phi, (Y(\overline{S_1}),\overline{S_2})=y\right)}{\displaystyle\sum_y\Prob \left((Y(\overline{S_1}),\overline{S_2})=y\right)}\nonumber\\
    &=&\frac{\displaystyle\sum_y\Prob \left([Y(S_1),S_2]\subset \ker\phi|(Y(\overline{S_1}),\overline{S_2})=y\right)}{\#\Big\{y\mbox{ such that }\mathcal{F}_n^{\mathcal{C}_\ell}/[y]_{\mathcal{F}_n^{\mathcal{C}_\ell}\rtimes\Gamma}\cong_\Gamma A\Big\}}\nonumber\\
    &=&\frac{|Y(H^{\mathcal{C}_\ell})|^n|Y({H^{\mathcal{C}_\ell}}^{\Gamma_1})|\prod_{i=2}^{u+1}|{H^{\mathcal{C}_\ell}}^{\Gamma_i}|}{|Y(H)|^n|Y(H^{\Gamma_1})|\prod_{i=2}^{u+1}|H^{\Gamma_i}|},\nonumber
    \end{eqnarray}
    where the sum in the third and fourth line ranges over all $y\in Y(\ker\overline\phi)^n\times Y((\ker\overline\phi)^{\Gamma_1})\times (\ker\overline\phi)^{\Gamma_2}\times\cdots\times (\ker\overline\phi)^{\Gamma_{u+1}}$ for which $\mathcal{F}_n^{\mathcal{C}_\ell}/[y]_{\mathcal{F}_n^{\mathcal{C}_\ell}\rtimes\Gamma}\cong A,$ and we use that the summand $P=\Prob ((Y(\overline{S_1}),\overline{S_2})=y)$ is independent of such $y$. The last line follows from \eqref{eq: H-moment key equation}. Note that if no such $y$ exists than the original quantity $f_n(A,\ell)$ is $0$ to begin with, so we can assume each of the sums have a nonzero number of terms. 
    
    Now we can compute $f_n(A,\ell)$ using conditional probability to be 
    \begin{eqnarray}
        &&f_n(A,\ell)\nonumber\\
        &=&\frac{|Y(H^{\mathcal{C}_\ell})|^n|Y({H^{\mathcal{C}_\ell}}^{\Gamma_1})|\prod_{i=2}^{u+1}|{H^{\mathcal{C}_\ell}}^{\Gamma_i}|}{|Y(H)|^n|Y(H^{\Gamma_1})|\prod_{i=2}^{u+1}|H^{\Gamma_i}|}\sum_{\phi:\mathcal{F}_n\twoheadrightarrow H}\Prob \left(\begin{gathered} \relax [Y(\overline{S_1}),\overline{S_2}]\subset \ker\overline{\phi}\mbox{ , }\\ \mathcal{F}_n^{\mathcal{C}_\ell}/[Y(\overline{S_1}),\overline{S_2}]_{\mathcal{F}_n^{\mathcal{C}_\ell}\rtimes\Gamma}\cong_\Gamma A\end{gathered}\right)\nonumber\\
        &=&\frac{|Y(H^{\mathcal{C}_\ell})|^n|Y({H^{\mathcal{C}_\ell}}^{\Gamma_1})|\prod_{i=2}^{u+1}|{H^{\mathcal{C}_\ell}}^{\Gamma_i}|}{|Y(H)|^n|Y(H^{\Gamma_1})|\prod_{i=2}^{u+1}|H^{\Gamma_i}|}\sum_{\phi:\mathcal{F}_n\twoheadrightarrow H}\Prob \left(X_{n,\underline{\Gamma}}^{\mathcal{C}_\ell}\cong_\Gamma A\mbox{ and }[Y(\overline{S_1}),\overline{S_2}]\subset \ker\overline{\phi}\right)\nonumber\\
        &=&\frac{|Y(H^{\mathcal{C}_\ell})|^n|Y({H^{\mathcal{C}_\ell}}^{\Gamma_1})|\prod_{i=2}^{u+1}|{H^{\mathcal{C}_\ell}}^{\Gamma_i}|}{|Y(H)|^n|Y(H^{\Gamma_1})|\prod_{i=2}^{u+1}|H^{\Gamma_i}|}\sum_{\phi:\mathcal{F}_n\twoheadrightarrow H}\sum_{\substack{N\subset\mathcal{F}_n^{\mathcal{C}_\ell},\\ \mathcal{F}_n^{\mathcal{C}_\ell}/N\cong A, \\ N\subset \ker\overline{\phi}}}\frac{P_{n,\underline{\Gamma}}(U_{\mathcal{C}_\ell,A})}{|Y(A)|^n|Y(A^{\Gamma_1})|\prod_{i=2}^{u+1}|A^{\Gamma_i}|}\nonumber\\
        &=&\frac{|Y(H^{\mathcal{C}_\ell})|^n|Y({H^{\mathcal{C}_\ell}}^{\Gamma_1})|\prod_{i=2}^{u+1}|{H^{\mathcal{C}_\ell}}^{\Gamma_i}|}{|Y(H)|^n|Y(H^{\Gamma_1})|\prod_{i=2}^{u+1}|H^{\Gamma_i}|}\nonumber\\
        &&\cdot\sum_{\phi:\mathcal{F}_n\twoheadrightarrow H}\frac{\#\Big\{(\tau,\pi)\big|\tau\in \Sur_\Gamma(\mathcal{F}_n^{\mathcal{C}_\ell},A),\pi\in \Sur_\Gamma(A,H^{\mathcal{C}_\ell}),\pi\circ\tau=\overline{\phi}\Big\}}{|\Aut_\Gamma(A)||Y(A)|^n|Y(A^{\Gamma_1})|\prod_{i=2}^{u+1}|A^{\Gamma_i}|}P_{n,\underline{\Gamma}}(U_{\mathcal{C}_\ell,A})\nonumber\\
        &=&\frac{|H^{\mathcal{C}_\ell}|^n\left(\prod_{i=1}^{u+1}|{H^{\mathcal{C}_\ell}}^{\Gamma_i}|\right)|\Sur_{\Gamma,H^{\mathcal{C}_\ell}}(\rho,H\twoheadrightarrow H^{\mathcal{C}_\ell})||\Sur_\Gamma(\mathcal{F}_n^{\mathcal{C}_\ell},A)||\Sur_\Gamma(A,H^{\mathcal{C}_\ell})|P_{n,\underline{\Gamma}}(U_{\mathcal{C}_\ell,A})}{{|H^{\mathcal{C}_\ell}}^\Gamma|^{n+1}|Y(H)|^n|Y(H^{\Gamma_1})|\left(\prod_{i=2}^{u+1}|H^{\Gamma_i}|\right)|\Aut_\Gamma(A)||Y(A)|^n|Y(A^{\Gamma_1})|\left(\prod_{i=2}^{u+1}|A^{\Gamma_i}|\right)},\nonumber
    \end{eqnarray}
    where the third line follows from the same argument as the first part of \Cref{thm: H-probability}, and the fourth line follows from the summand in the third line being independent of $N$, rephrasing conditions about $\Gamma$-normal subgroups in terms of surjective morphisms, and counting. The fifth line follows from exactly the same calculations in the proof of Theorem 6.2 of \cite{lwzb}, where $\rho$ is the composition of the pro-$\mathcal{C}_\ell$ completion of $\mathcal{F}_n$ with $\overline{\phi}$, and $\Sur_{\Gamma,H^{\mathcal{C}_\ell}}(\rho,H\twoheadrightarrow H^{\mathcal{C}_\ell})$ is the number of surjections $\mathcal{F}_n\rightarrow H$ that induce the surjection $\mathcal{F}_n^{\mathcal{C}_\ell}\rightarrow H^{\mathcal{C}_\ell}$. We note that $|\Sur_{\Gamma,H^{\mathcal{C}_\ell}}(\rho,H\twoheadrightarrow H^{\mathcal{C}_\ell})|$ only depends on $H, \mathcal{C}_\ell$ and not on the maps themselves. 
    
    Now set 
    \[g_n(A,\ell):=\frac{|H^{\mathcal{C}_\ell}|^n\left(\prod_{i=1}^{u+1}|{H^{\mathcal{C}_\ell}}^{\Gamma_i}|\right)|\Sur_{\Gamma,H^{\mathcal{C}_\ell}}(\rho,H\twoheadrightarrow H^{\mathcal{C}_\ell})||\Sur_\Gamma(\mathcal{F}_n^{\mathcal{C}_\ell},A)||\Sur_\Gamma(A,H^{\mathcal{C}_\ell})|}{|{H^{\mathcal{C}_\ell}}^\Gamma|^{n+1}|Y(H)|^n|Y(H^{\Gamma_1})|\left(\prod_{i=2}^{u+1}|H^{\Gamma_i}|\right)|\Aut_\Gamma(A)||Y(A)|^n|Y(A^{\Gamma_1})|\left(\prod_{i=2}^{u+1}|A^{\Gamma_i}|\right)},\]
    and 
    \[g(A,\ell)=\frac{|Y({H^{\mathcal{C}_\ell}}^{\Gamma_1})|\left(\prod_{i=2}^{u+1}|{H^{\mathcal{C}_\ell}}^{\Gamma_i}|\right)|\Sur_\Gamma(A,H^{\mathcal{C}_\ell})|}{|Y(H^{\Gamma_1})|\left(\prod_{i=2}^{u+1}|H^{\Gamma_i}|\right)|\Aut_\Gamma(A)||Y(A^{\Gamma_1})|\left(\prod_{i=2}^{u+1}|A^{\Gamma_i}|\right)}.\]
    Using that $|\Sur_{\Gamma,H^{\mathcal{C}_\ell}}(\rho,H\twoheadrightarrow H^{\mathcal{C}_\ell})|\leq |\Hom_{\Gamma,H^{\mathcal{C}_\ell}}(\rho,H\twoheadrightarrow H^{\mathcal{C}_\ell})|=\frac{|Y(H)|^n}{|Y(H^{\mathcal{C}_\ell})|^n}$ as in the proof of Theorem $6.2$ of \cite{lwzb} yields $g_n(A,\ell)\leq g(A,\ell), \:g_n(A,\ell)\rightarrow g(A,\ell)$ as $n\rightarrow\infty$, and by definition $f_n(A,\ell)=g_n(A,\ell)P_{n,\underline{\Gamma}}(U_{\mathcal{C}_\ell,A})$. From the definition of $f_n$ in terms of expected values, condition $(3)$ of \Cref{lem: switch limits} is satisfied as well. Therefore by \Cref{lem: switch limits},
    
    \[\sum_{A\in\overline{\mathcal{C}_\ell}}\lim_{n\rightarrow\infty}f_n(A,\ell)=\lim_{n\rightarrow\infty}f_n(1,1)=\lim_{n\rightarrow\infty}\mathbb{E}_n(|\Sur_\Gamma(X,H)|)=\frac{|H^{\Gamma}|}{\prod_{i=1}^{u+1}|H^{\Gamma_i}|}.
    \]
    
    Finally, taking $\ell$ large enough so that $H$ is level $\mathcal{C}_\ell$, 
    \begin{align*}
        \sum_{A\in\overline{\mathcal{C}_\ell}}\lim_{n\rightarrow\infty}f_n(A,\ell)&=\sum_{A\in\overline{\mathcal{C}_\ell}}\lim_{n\rightarrow\infty}g_n(A,\ell)P_{n,\underline{\Gamma}}(U_{\mathcal{C}_\ell,A})\\
        &=\sum_{A\in\overline{\mathcal{C}_\ell}}\lim_{n\rightarrow\infty}g(A,\ell)P_{n,\underline{\Gamma}}(U_{\mathcal{C}_\ell,A})\\
        &=\sum_{A\in\overline{\mathcal{C}_\ell}}\lim_{n\rightarrow\infty}\frac{|\Sur_\Gamma(A,H)|}{|\Aut_\Gamma(A)||Y(A^{\Gamma_1})|\left(\prod_{i=2}^{u+1}|A^{\Gamma_i}|\right)}P_{n,\underline{\Gamma}}(U_{\mathcal{C}_\ell,A})\\
        &=\sum_{A\in\overline{\mathcal{C}_\ell}}|\Sur_\Gamma(A,H)|\lim_{n\rightarrow\infty}\frac{1}{|\Aut_\Gamma(A)||Y(A^{\Gamma_1})|\left(\prod_{i=2}^{u+1}|A^{\Gamma_i}|\right)}P_{n,\underline{\Gamma}}(U_{\mathcal{C}_\ell,A})\\
        &=\sum_{A\in\overline{\mathcal{C}_\ell}}|\Sur_\Gamma(A,H)|\mu_{\underline{\Gamma}}(U_{\mathcal{C}_\ell,A})\\
        &=\mathbb{E}(|\Sur_\Gamma(X,H)|).
    \end{align*}
    
 \end{proof}

 \Cref{thm: H-probability}, \Cref{thm: measure closed set}, and \Cref{thm: H-moment} show that the random group model we constructed gives a probability distribution whose $H$-moments are $\frac{|H^\Gamma|}{\prod_{i=1}^{u+1}|H^{\Gamma_i}|}$. As long as $H$ is prime to $\Delta_Q:=|\Cl_T(Q)||\mu(Q)||\Gamma|$ ($\Delta_Q$ is also multiplied by $\text{char}(Q)$ if $Q$ is a function field), \Cref{conj: main} predicts that the $H$-moment of the Galois group of the maximal pro-prime-to-$|\Gamma|$ unramified extension of $K$ split completely at $T$, filtered by product of ramified primes, is equal to the $H$-moment of the probability distribution of the random group model we constructed, where $\Gamma_i$ are fixed decomposition groups of primes in $T$. 
 \begin{remark}\label{rmk: uniqueness of moments}
    Uniqueness of moments with respect to $\mu_{\underline{\Gamma}}$ follows from the existence of a random group model with the desired moments, from the work of Sawin (see Theorem 1.2 of \cite{saw}). Indeed the only condition we need to check is that the moments are of order $|H|^{O(1)}$, which follows from the explicit formula for the moment $\frac{|H^\Gamma|}{\prod_{i=1}^{u+1}|H^{\Gamma_i}|}$. In particular, if \Cref{conj: main} is true for all $H$ prime to $\Delta_Q$, then we have the probability version of these conjectures:
 \end{remark}

 \begin{theorem}[Moment version implies the Probability version]\label{thm: moment implies probability}
    Under the same assumptions of \Cref{conj: main}, fix a finite set of $\Delta_Q'$-$\Gamma$-groups, $\mathcal{C}$. Suppose \Cref{conj: main} holds for all finite level $\mathcal{C}$ groups $H$. If $\#E'_{\Gamma,(\Gamma_v)_{v\in T}}(Q)\neq0$, then for any finite level $\mathcal{C}$ group $H$,
    \begin{equation}\lim_{X\rightarrow\infty}\frac{\sum_{D\leq X}\sum_{K\in E'_{\Gamma,(\Gamma_v)_{v\in T}}(D,Q)}1_H(G_{\text{\normalfont\O}}^T(K)^{\mathcal{C}})}{\sum_{D\leq X}\#E'_{\Gamma,(\Gamma_v)_{v\in T}}(D,Q)}=\mu_{\underline{\Gamma}}(U_{\mathcal{C},H}).\end{equation}
    where $1_H(G)$ is the indicator function that equals $1$ if $H\cong_{\Gamma}G$ and $0$ otherwise, and $\underline{\Gamma}=(\Gamma_v)_{v\in T}$, ordered in some way. 
 \end{theorem}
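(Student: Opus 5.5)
The plan is to apply Sawin's uniqueness-of-moments theorem (Theorem 1.2 of \cite{saw}) for random pro-$\mathcal{C}$ $\Gamma$-groups with $\mathcal{C}$ finite. First we reduce to the finite level $\mathcal{C}$ quotient. For every finite level $\mathcal{C}$ group $H$ and every $K$ we have
\[
|\Sur_\Gamma(G_{\text{\O}}^T(K)_{|\Gamma|'},H)|=|\Sur_\Gamma(G_{\text{\O}}^T(K)^{\mathcal{C}},H)|,
\]
because any $\Gamma$-surjection onto a group in $\overline{\mathcal{C}}$ factors through the pro-$\mathcal{C}$ completion. Since $\mathcal{C}$ consists of $\Delta_Q'$-groups and is finite, Theorem 6.4 of \cite{Liu2} shows $G_{\text{\O}}^T(K)^{\mathcal{C}}$ is a finite $\Gamma$-group of level $\mathcal{C}$.

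For each $X$ we then form the empirical probability measure $\nu_X$ on the countable set of isomorphism classes of finite level $\mathcal{C}$ $\Gamma$-groups. It assigns each $K\in E'_{\Gamma,(\Gamma_v)_{v\in T}}(D,Q)$ with $D\le X$ equal weight and pushes it forward to $G_{\text{\O}}^T(K)^{\mathcal{C}}$. Under this measure, \Cref{conj: main} says that for every finite level $\mathcal{C}$ group $H$, the $H$-moment of $\nu_X$ tends to $M_H:=|H^\Gamma|/\prod_{v\in T}|H^{\Gamma_v}|$. For non-admissible $H$ both sides vanish: the Galois side vanishes by \Cref{cor: admissibility}, since quotients of $G_{\text{\O}}^T(K)^{\mathcal{C}}$ are admissible, and the model side vanishes because quotients of $\mathcal{F}_n$ are admissible.

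On the model side we take the pushforward of $\mu_{\underline{\Gamma}}$ under $X\mapsto X^{\mathcal{C}}$. It gives mass $\mu_{\underline{\Gamma}}(U_{\mathcal{C},H})$ to $H$, and these masses sum to $1$ by the additivity shown in \Cref{thm: mu countably additive}. By \Cref{thm: H-moment} its $H$-moments are exactly $M_H$, since $|\Sur_\Gamma(X,H)|=|\Sur_\Gamma(X^{\mathcal{C}},H)|$ for level $\mathcal{C}$ groups $H$.

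The main step is to invoke Sawin's theorem in its robust form. If the moments of a sequence of measures converge to moments $M_H$ that are $O(|H|^{O(1)})$ and are realized by some measure, then that measure is unique and the sequence converges weakly to it. The growth condition holds trivially, since $M_H\le |H|^{O(1)}$ (indeed $M_H\le|H|$), as noted in \Cref{rmk: uniqueness of moments}. Existence is supplied by the pushforward of $\mu_{\underline{\Gamma}}$.

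The step I expect to be the main obstacle is making sure Sawin's theorem is used in its limit form (convergence of moments implies convergence of measures) and not only its uniqueness form. If only uniqueness were available, I would argue as follows. Tightness is automatic because the $1$-moment together with the finitely many $H$-moments control the mass escaping to large groups in the countable set. Hence any subsequential weak limit of $\nu_X$ exists. Its moments equal $M_H$ by uniform integrability, which follows from bounding the $H$-moment by the $H\times H$-type moments, all of which converge. Uniqueness then identifies every subsequential limit with the pushforward of $\mu_{\underline{\Gamma}}$. Since the space is discrete, weak convergence gives pointwise convergence at each $H$, which is exactly the stated limit with the value $\mu_{\underline{\Gamma}}(U_{\mathcal{C},H})$.
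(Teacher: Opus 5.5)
Your proposal is correct and follows the same route as the paper. The paper likewise invokes Theorem 1.2 of \cite{saw} after checking that the moments $|H^\Gamma|/\prod_{v\in T}|H^{\Gamma_v}|$ grow like $|H|^{O(1)}$, and it uses $\mu_{\underline{\Gamma}}$ (via \Cref{thm: H-moment}) as the measure realizing those moments. Your reduction to $G_{\text{\O}}^T(K)^{\mathcal{C}}$, your treatment of non-admissible $H$, and your fallback tightness/uniform-integrability argument only spell out details the paper leaves implicit. The tightness step in the fallback is asserted rather than proved, but it is not needed once Sawin's theorem is used in its convergence form, as the paper also does.
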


 By the same argument, if \Cref{conj: E''} is true for all level $\mathcal{C}$ groups $H$, we obtain the probability version of \Cref{conj: E''} by an application of Theorem $1.2$ of \cite{saw}, and similarly for \Cref{conj: main 2}. 

\bibliographystyle{alpha}
\bibliography{References}

\end{document}